\newcommand{\p}[1]{\ensuremath{\overline{#1}}}
\newcommand{\losemi}{{\otimes \kern -.78em \ltimes}}
\newcommand{\rosemi}{{\otimes \kern -.78em \rtimes}}
\newcommand{\Hom}{\ensuremath{\operatorname{Hom}}}
\newcommand{\Res}{\ensuremath{\operatorname{Res}}}
\newcommand{\Ker}{\ensuremath{\operatorname{Ker} }}
\newcommand{\Ext}{\operatorname{Ext}}
\newcommand{\0}{\bar 0}
\newcommand{\1}{\bar 1}
\newcommand{\Z}{\mathbb{Z}}
\newcommand{\C}{\mathbb{C}}
\newcommand{\setof}[2]{\ensuremath{\left\{ #1 \:|\: #2 \right\}}}
\newcommand{\gl}{\ensuremath{\mathfrak{gl}}}
\newcommand{\g}{\ensuremath{\mathfrak{g}}}
\newcommand{\e}{\ensuremath{\mathfrak{e}}}
\newcommand{\res}{\ensuremath{\operatorname{res}}}
\newcommand{\fg}{\ensuremath{\mathfrak{g}}}
\newcommand{\fb}{\ensuremath{\mathfrak{b}}}
\newcommand{\fh}{\ensuremath{\mathfrak{h}}}
\newcommand{\fe}{\ensuremath{\e}}
\newcommand{\fc}{\ensuremath{\mathfrak{c}}}
\newcommand{\fa}{\ensuremath{\mathfrak{a}}}
\newcommand{\fp}{\ensuremath{\mathfrak{p}}}
\newcommand{\ft}{\ensuremath{\mathfrak{t}}}
\newcommand{\atyp}{\ensuremath{\operatorname{atyp}}}
\newcommand{\HH}{\operatorname{H}}
\newtheorem{Df}{Definition}[subsection]
\newtheorem{theorem}[Df]{Theorem}
\newtheorem{corollary}[Df]{Corollary}
\newtheorem{prop}[Df]{Proposition}
\numberwithin{equation}{subsection}
\begin{document}
\title{Cohomology and Support Varieties for Lie Superalgebras II}

\author{Brian D. Boe }
\address{Department of Mathematics \\
            University of Georgia \\
            Athens, GA 30602}
\email{brian@math.uga.edu}
%\thanks{Research of the first author was partially supported by NSA 
%grant H98230-04-1-0103}
\author{Jonathan R. Kujawa}
\address{Department of Mathematics \\
            University of Oklahoma \\
            Norman, OK 73019}
\thanks{Research of the second author was partially supported by NSF grant
DMS-0402916}\
\email{kujawa@math.ou.edu}
\author{Daniel K. Nakano}
\address{Department of Mathematics \\
            University of Georgia \\
            Athens, GA 30602}
\thanks{Research of the third author was partially supported by NSF
grant  DMS-0400548}\
\email{nakano@math.uga.edu}
\date{\today}
\subjclass[2000]{Primary 17B56, 17B10; Secondary 13A50}

\begin{abstract} In \cite{BKN} the authors initiated a study of the representation theory of classical 
Lie superalgebras via a cohomological approach. Detecting subalgebras were constructed and a 
theory of support varieties was developed. The dimension of a detecting subalgebra coincides  
with the defect of the Lie superalgebra and the dimension of the support variety  
for a  simple supermodule was conjectured to equal the atypicality of the supermodule. In this paper 
the authors compute the support varieties for Kac supermodules for Type I Lie superalgebras and 
the simple supermodules for $\mathfrak{gl}(m|n)$. The latter result verifies our earlier conjecture 
for $\mathfrak{gl}(m|n)$. In our investigation we also delineate several of the major differences between Type I 
versus Type II classical Lie superalgebras. Finally, the connection between atypicality, defect and superdimension is 
made more precise by using the theory of support varieties and representations of Clifford superalgebras.  
\end{abstract}

\maketitle

\section{Introduction}\label{S:intro}  

\subsection{} Let ${\mathfrak g}={\mathfrak g}_{\0}\oplus {\mathfrak g}_{\1}$ be a 
classical Lie superalgebra over the complex numbers. For any classical Lie superalgebra there exists by definition a connected reductive algebraic group $G_{\0}$  such that $\operatorname{Lie}\left(G_{\0} \right)
={\mathfrak g}_{\0}$. The simple classical Lie superalgebras were classified by Kac \cite{Kac1}. 
In \cite{BKN} the authors used relative cohomology for the pair $({\mathfrak g},{\mathfrak g}_{\0})$
to investigate the  combinatorics and representation theory of the blocks in the category of  finite dimensional 
representations of the Lie superalgebra ${\mathfrak g}$. In this situation the cohomology 
ring $R=\HH^{\bullet}({\mathfrak g},{\mathfrak g}_{\0}; {\mathbb  C})$ is 
finitely generated because $G_{\0}$ is reductive.  
By using invariant theoretic results due to Luna and Richardson \cite{luna} and 
Dadok and Kac \cite{dadokkac}, the authors were able to construct natural ``detecting'' subalgebras 
${\mathfrak e}={\mathfrak e}_{\0}\oplus {\mathfrak e}_{\1}$ of ${\mathfrak g}$ 
such that the restriction map in cohomology induces an isomorphism 
\[
R=\HH^{\bullet}(\fg,\fg_{\0};\C )
\cong \HH^{\bullet}(\fe,\fe_{\0};\C)^{W},
\] 
where $W$ is a finite pseudoreflection group. One striking outcome of our construction was that 
the dimension of the odd part of the detecting subalgebras and the Krull dimension of $R$ both coincide with the 
combinatorially defined defect of ${\mathfrak g}$ introduced earlier by 
Kac and Wakimoto \cite{kacwakimoto}. 

\subsection{}\label{SS:intro2} Given a finite dimensional ${\mathfrak g}$-supermodule, $M$, one can use the finite generation 
of $R$ to define the cohomological support varieties ${\mathcal V}_{({\mathfrak e},{\mathfrak e}_{\0})}(M)$ and 
${\mathcal V}_{({\mathfrak g},{\mathfrak g}_{\0})}(M)$. In \cite[Theorem 6.2.2]{BKN} it was 
proved that ${\mathcal V}_{({\mathfrak g},{\mathfrak g}_{\0})}(M)$ can be identified 
generically with ${\mathcal V}_{({\mathfrak e},{\mathfrak e}_{\0})}(M)/W$ and conjectured that this 
should hold everywhere. The variety ${\mathcal V}_{({\mathfrak e},{\mathfrak e}_{\0})}(M)$ 
can be identified via a ``rank variety'' description as a certain subvariety of 
${\mathfrak e}_{\1}$ \cite[Theorem 6.3.2]{BKN}. The rank variety description enabled us to 
demonstrate that the representation theory for the superalgebra over ${\mathbb C}$ 
has similar features to modular representations of finite groups over 
fields of characteristic two (cf.\ \cite[Corollary 6.4.1]{BKN}). 
When $\fg$ admits an appropriate bilinear form one can define the ``atypicality'' of a block 
and of a simple supermodule.  Atypicality, due to Kac and Serganova, is a combinatorial 
invariant used to give a rough measure of the complications involved in the block structure. 
Evidence from examples led us to conjecture that if $L(\lambda)$ is a finite dimensional 
simple ${\mathfrak g}$-supermodule then the atypicality of $L(\lambda)$ 
(denoted by $\text{atyp}(L(\lambda))$) equals $\dim {\mathcal V}_{({\mathfrak e},
{\mathfrak e}_{\0})}(L(\lambda))$ (cf.\ \cite[Conjecture 7.2.1]{BKN}). 

\subsection{} This paper is aimed at providing applications and concrete computations for 
the theory developed in \cite{BKN}. In Section~\ref{S:Kacmodules} we distinguish 
between Type I and Type II classical Lie superalgebras. Type I Lie superalgebras are the 
ones (such as $\mathfrak{gl}(m|n)$) which admit a compatible $\Z$-grading concentrated in degrees $-1,$ $0,$ and $1.$  Otherwise $\fg$ is said to be of Type II.  We prove for Type I Lie superalgebras that the category $\mathcal{F}$ of finite dimensional supermodules is a highest weight category as defined by Cline, Parshall and Scott \cite{CPS}. 
We also show that if $K(\lambda)$ is a Kac supermodule (i.e.,\ universal highest weight 
supermodule) for a Type I Lie superalgebra $\fg,$ then ${\mathcal V}_{(\fg,\fg_{\0})}(K(\lambda))=
{\mathcal V}_{(\fe,\fe_{\0})}(K(\lambda))=\{0\}$. On the other hand, by using work 
of Germoni \cite{germoni} on the Type II classical Lie superalgebras $\mathfrak{osp}(3|2),$ $D(2,1;\alpha),$ and $G(3),$ one 
sees that when $\fg$ is of Type II the category ${\mathcal F}$ need not be a highest weight category 
and there exist Kac supermodules with nontrivial support varieties.  These results make clear the significant representation theoretic differences between Type I and Type II Lie superalgebras.

In Section~\ref{S:typeA} we apply results of \cite{BKN}, Duflo and Serganova \cite{dufloserganova}, and Serganova \cite{serganova3} to compute the support varieties ${\mathcal V}_{({\mathfrak g},{\mathfrak g}_{\0})}(L(\lambda))$ 
and ${\mathcal V}_{({\mathfrak e},{\mathfrak e}_{\0})}(L(\lambda))$ of the finite dimensional simple supermodules $L(\lambda)$ when 
${\mathfrak g}=\mathfrak{gl}(m|n)$. For simple $\mathfrak{gl}(m|n)$-supermodules 
our computations verify the conjectures mentioned in Section~\ref{SS:intro2}. Namely, we prove 
$${\mathcal V}_{({\mathfrak g},{\mathfrak g}_{\0})}(L(\lambda))\cong 
{\mathcal V}_{({\mathfrak e},{\mathfrak e}_{\0})}(L(\lambda))/W,$$
and that the atypicality of $L(\lambda)$ equals $\dim {\mathcal V}_{({\mathfrak e},{\mathfrak e}_{\0})}
(L(\lambda))$. A remarkable outcome of our results is that one can now extend the definition  
of atypicality to {\em all} $\mathfrak{gl}(m|n)$-supermodules in a functorial way by setting 
$\text{atyp}(M)=\dim {\mathcal V}_{({\mathfrak e},{\mathfrak e}_{\0})}(M)$.  

In the final section we demonstrate that the codimension of the support variety 
${\mathcal V}_{({\mathfrak e},{\mathfrak e}_{\0})}(M)$ is directly  
related to the $2$-divisibility of the dimension of $M$. In many ways 
our results can be thought of as block theoretic analogues 
of the celebrated Kac-Weisfeiler Conjecture for non-restricted representations
of classical Lie algebras over fields of characteristic $p>0$. The 
Kac-Weisfeiler Conjecture \cite{kacweisfeiler} connects the $p$-divisibility of modules with the size of 
an associated geometric object 
(i.e., the corresponding nilpotent orbit).  Premet proved the conjecture in 1995 \cite{premet}.  We also 
relate $2$-divisibility 
to the defect and atypicality for the simple $\mathfrak{gl}(m|n)$-supermodules. 

The authors are grateful to Weiqiang Wang for bringing the paper \cite{germoni} to our attention.  
The second author would like to thank Calvin Burgoyne and Mitchell Rothstein for helpful 
conversations about the representation theory of Clifford algebras.
\newpage

\section{Notation}\label{S:prelims}  

\subsection{}\label{SS:prelims}  Throughout 
we work with the complex numbers $\C$ as the ground field.  Recall that a superspace is a $\Z_{2}$-graded 
vector space and, given a superspace $V$ and a homogeneous vector $v \in V,$ we write $\p{v} \in \Z_{2}$ 
for the \emph{parity} (or \emph{degree}) of $v.$  Elements of $V_{\0}$ (resp.\ $V_{\1}$) are called \emph{even} 
(resp.\ \emph{odd}).  The \emph{superdimension} of a superspace $V$ is the integer $\dim V_{\0}-\dim V_{\1}.$  Note that if $M$ and $M'$ are two superspaces, then the space $\Hom_{\C}(M,M')$ is 
naturally $\Z_{2}$-graded by declaring $f \in \Hom_{\C}(M,M')_{r}$ ($r\in \Z_{2}$) if $f(M_{s}) \subseteq M'_{s+r}$ for 
all $s \in \Z_{2}$.   

A superalgebra is a $\Z_2$-graded, unital, associative algebra $A=A_{\0}\oplus A_{\1}$ which satisfies 
$A_{r}A_{s}\subseteq A_{r+s}$ for all $r,s\in \Z_2.$  A \emph{Lie superalgebra} is a superspace 
$\g=\g_{\0}\oplus \g_{\1}$ with a bracket operation $[\;,\;]:\g \otimes \g \to \g$ which preserves the 
$\Z_{2}$-grading and satisfies graded versions of the usual Lie bracket axioms.  In particular, we note 
that $\g_{\0}$ is a Lie algebra under the bracket obtained by restricting the bracket of $\g.$  If $\g$ is a 
Lie superalgebra, then one has a universal enveloping superalgebra $U(\g)$ which is $\Z_2$-graded and 
satisfies a PBW-type theorem.  See, for example, \cite{Kac1} for details and further background on Lie superalgebras.

We call a finite dimensional Lie superalgebra \emph{classical} if there is a connected reductive algebraic group $G_{\0}$ such that $\operatorname{Lie}(G_{\0})=\g_{\0},$ and an action of $G_{\0}$ on $\g_{\1}$ which differentiates to the adjoint action of $\g_{\0}$ on $\g_{\1}.$  In particular, if $\g$ is classical, then $\g_{\0}$ is a reductive Lie algebra and $\g_{\1}$ is semisimple as a $\g_{\0}$-module.  Note that we do not assume that $\g$ is simple.  A \emph{basic classical} Lie superalgebra is a classical Lie superalgebra with a nondegenerate invariant supersymmetric even bilinear form.  The simple classical Lie superalgebras were classified by Kac \cite{Kac1}.

Given a Lie superalgebra $\g$, let us describe the category of $\g$-supermodules.  The objects are 
all left $U(\g)$-modules which are $\Z_{2}$-graded; that is, superspaces $M$ satisfying $U(\g)_{r}M_{s} 
\subseteq M_{r+s}$ for all $r, s \in \Z_{2}.$  If $M$ is a $\g$-supermodule, then by definition $N \subseteq M$ is a 
subsupermodule if it is a supermodule which inherits its grading from $M$ in the sense that $M_{r} 
\cap N = N_{r}$ for $r \in \Z_2$. We say a supermodule is \emph{finitely semisimple} if it decomposes into a direct sum of finite dimensional simple supermodules.  Given ${\mathfrak g}$-supermodules $M$ and $N$ one can use the 
antipode and coproduct of $U({\mathfrak g})$ to define a ${\mathfrak g}$-supermodule 
structure on the contragradient dual $M^{*}$ and the tensor product $M\otimes N$. 

A morphism of $U(\g)$-supermodules is an element $f\in\Hom_{\C}(M,M^{\prime})$ 
satisfying $f(xm)=(-1)^{\p{f}\;\p{x}}xf(m)$ for all $m \in M$ and 
all $x \in U(\g).$  Note that this 
definition makes sense as stated only for homogeneous elements; it should be interpreted via linearity 
in the general case.  We emphasize that we allow \emph{all} morphisms and not just graded (i.e.\ \emph{even}) 
morphisms.  However, note that  $\Hom_{U(\g)}(M,M^{\prime})$ inherits a $\Z_{2}$-grading as a subspace of  $\Hom_{{\mathbb C}}(M,M^{\prime}).$ 

The category of $\g$-supermodules is not an abelian category.  However, the \emph{underlying even category}, 
consisting of the same objects but only the even morphisms, is an abelian category.  This, along with the parity 
change functor, $\Pi,$ which interchanges the $\Z_{2}$-grading of a supermodule, allows one to make use of 
the tools of homological algebra (cf.\ \ref{SS:HWC2}).  

 Given a category, $\mathcal{C},$ of $\fg$-supermodules and objects $M, N$ in $\mathcal{C},$ we write 
\[
\Ext_{\mathcal{C}}^{d}(M,N)
\] for the degree $d$ extensions between $N$ and $M$ in the category $\mathcal{C}.$

As a special case of the above discussion, we always view a Lie algebra (e.g.\ the even part of a Lie superalgebra) 
as a Lie superalgebra concentrated in degree $\0$.   

\subsection{Relative Cohomology}\label{SS:relcohom} Let us recall the definition of relative cohomology for Lie superalgebras.  Let $M$ be a $\fg$-supermodule.  Let $\fg$ be a Lie superalgebra, $\ft \subseteq \fg$ a Lie subsuperalgebra, and $M$ a $\fg$-supermodule.  Define 
\[
C^{p}(\g, \mathfrak{t}; M)=\Hom_{\mathfrak{t}}(\wedge^{p}_{s}(\g/\mathfrak{t}), M).
\]  Note that $\wedge^{p}_{s}(V)$ denotes the super (i.e.\  graded) wedge product of the superspace $V$.  In particular, if $V=V_{\1}$ then $\Lambda^{p}_{s}(V) = S^{p}(V),$ the ordinary symmetric product.  

There is a differential $d^{p}: C^{p}(\g, \mathfrak{t}; M) \to  C^{p+1}(\g, \mathfrak{t}; M)$ defined as
for relative cohomology for Lie algebras (cf.\ \cite[Section 2.2]{BKN}) such that 
\[
\HH^{p}(\g, \mathfrak{t}; M)=\Ker d^{p}/\operatorname{Im} d^{p-1}.
\]  Let $\mathcal{C}(\fg, \ft)$ denote the full subcategory of all $\fg$-supermodules which are finitely semisimple as $\ft$-supermodules.  A key connection is that if $M$ and $N$ are objects of $\mathcal{C}(\fg ,\ft),$ then one has 
\begin{equation}\label{E:relcohomiso}
\Ext_{\mathcal{C}(\fg ,\ft)}^{\bullet}(M,N) \cong \HH^{\bullet}(\g, \mathfrak{t}; M^{*}\otimes N).
\end{equation}

\subsection{Support Varieties}\label{SS:morecohom} Let $\mathcal{F}=\mathcal{F}(\fg, \fg_{\0})$ denote the full subcategory of all finite dimensional $\fg$-supermodules which are completely reducible as $\fg_{\0}$-supermodules.  This is the category of interest in \cite{BKN}.  One then has the graded cohomology ring 
\[
R:=\Ext^{\bullet}_{\mathcal{F}}(\C,\C).
\] 
 It was proven in \cite[Theorem 2.5.1]{BKN} that if $M$ and $N$ are objects in $\mathcal{F},$ then 
\begin{equation}\label{E:relcohomiso2}
\HH^{\bullet}(\fg ,\fg_{\0}; M^{*} \otimes N) \cong \Ext^{\bullet}_{\mathcal{F}}(M,N).
\end{equation}
As a consequence, when $\fg$ is classical $R$ is a finitely generated commutative ring.  Furthermore, if $M$ and $N$ are objects of $\mathcal{F},$ then $\Ext^{\bullet}_{\mathcal{F}}(M,N)$ is a finitely generated graded $R$-module \cite[Theorem 2.5.3]{BKN}.

Set 
\[
I_{(\mathfrak{g},\mathfrak{g}_{\0})}(M,N)=\operatorname{Ann}_{R}(\Ext_{\mathcal{F}}^{\bullet}(M,N)),
\] the annihilator ideal of this module.  We define the \emph{support variety of the pair $(M,N)$} to be 
\begin{equation}\label{E:relsuppvardef} \mathcal{V}_{(\mathfrak{\fg },\mathfrak{\fg }_{\0})}(M,N)=
\operatorname{MaxSpec}(R/I_{(\mathfrak{g},\mathfrak{g}_{\0})}(M,N)).
\end{equation}  In particular, when $M=N$ we define the \emph{support variety of $M$} to be 
\begin{equation}\label{E:suppvardef} \mathcal{V}_{(\mathfrak{\fg },\mathfrak{\fg }_{\0})}(M)=
\operatorname{MaxSpec}(R/I_{(\mathfrak{g},\mathfrak{g}_{\0})}(M,M)).
\end{equation}

In the case when $\fg$ is a simple classical Lie superalgebra, the ring $R$ turns out to always be a polynomial ring \cite[Section 8.8]{BKN} in, say, $r$ variables.  In this case one can view $\mathcal{V}_{(\mathfrak{\fg },\mathfrak{\fg }_{\0})}(M)$ as a closed, conical affine variety; namely, the subvariety of $\operatorname{MaxSpec}(R) \cong \mathbb{A}^{r}$ defined by the ideal $I_{(\mathfrak{g},\mathfrak{g}_{\0})}(M,M).$

\subsection{Detecting Subalgebras}\label{SS:detecting}  If $\fg$ is a 
classical Lie superalgebra which satisfies certain invariant theoretic restrictions (i.e.\ $\fg$ is \emph{stable} and \emph{polar}), then one can construct a classical subalgebra $\fe \subseteq \fg$ such 
that the natural inclusion map $\fe \hookrightarrow \fg$ induces the following isomorphism in cohomology,
\[
R \cong \HH^{\bullet}(\fe , \fe_{\0};\C)^{W},
\] where $W$ is a finite pseudoreflection group \cite[Section 4.1]{BKN}.  

As $\fe$ is classical, one can define support varieties just as in \eqref{E:suppvardef} for objects of $\mathcal{F}(\fe , \fe_{\0}).$   The variety $\mathcal{V}_{(\fe ,\fe_{\0})}(M)$ shares a number of features of the classical theory of support varieties and admits the following rank variety description \cite[Theorem 6.3.2]{BKN}.  As a matter of notation, given a homogeneous 
element $x \in \fe,$ let $\langle x \rangle$ denote the Lie subsuperalgebra generated by $x.$   
Define the \emph{rank variety} of $M$ to be 
\[
{\mathcal V}_{(\fe ,\fe_{\0})}^{\text{rank}}\left(M \right)=\left\{ x \in\fe_{\1}\:\vert\:  M
\text{ is not projective as an $\langle x \rangle$-supermodule} \right\} \cup \{0 \}.
\] Then ${\mathcal V}_{(\fe ,\fe_{\0})}(M) \cong {\mathcal V}_{(\fe ,\fe_{\0})}^{\text{rank}}(M).$
We also remark that if $M$ is an object of $\mathcal{F}(\fg , \fg_{\0}),$ then one can view $M$ as an object of $\mathcal{F}(\fe , \fe_{\0})$ by restriction.  In particular, one then has the following induced map of varieties,
\[
\operatorname{res}^{*} : \mathcal{V}_{(\e,\e_{\0})}(M) \to  \mathcal{V}_{(\g,\g_{\0})}(M).
\]  Furthermore, one has (cf.\ \cite[$(6.1.3)$]{BKN}),
\begin{align}\label{E:resmapsbetweenvarieties}
\mathcal{V}_{(\e,\e_{\0})}(M)/W &\cong \res^{*}\left(\mathcal{V}_{(\e,\e_{\0})}(M)\right)\subseteq \mathcal{V}_{(\g,\g_{\0})}(M).
\end{align}

\section{Type I and Type II Lie Superalgebras}\label{S:Kacmodules}
A Lie superalgebra is said to be of Type I if it admits a consistent $\Z$-grading concentrated in degrees $-1,$ $0,$ and $1.$  Otherwise it is of Type II (cf.\ Section~\ref{SS:Kacasses}).  The distinction between these two types was first made by Kac \cite{Kac1}.  In this section we will see that this distinction is also significant from the point of view of representation theory and support varieties. 

%In this section we compute the support varieties for the universal highest weight modules in $\mathcal{F}=\mathcal{F}(\fg, \fg_{\0})$ for certain Lie superalgebras $\fg.$

\subsection{}\label{SS:Kacasses} Let ${\mathfrak g}={\mathfrak g}_{\0}\oplus 
{\mathfrak g}_{\1}$ be a classical Lie superalgebra  with a $\Z$-grading 
\[
\fg=\fg_{-1}\oplus \fg_{0}\oplus \fg_{1}
\] which satisfies $[\fg_{i},\fg_{j}] \subseteq \fg_{i+j}$ for all $i,j \in \Z,$ $\fg_{\0}= \fg_{0},$ and $\fg_{\1}= \fg_{-1}\oplus \fg_{1}.$  That is, in the terminology of \cite{Kac1}, the $\Z$-grading is \emph{consistent} since $\fg_{\0}=\sum_{k}\fg_{2k}$ and $\fg_{\1}=\sum_{k}\fg_{2k+1}$.  As $\fg$ is concentrated in degrees $-1$, $0$, and $1,$ by definition $\fg$ is of Type I.    Let 
\begin{align*}
 \fp^{+}= \fg_{0} \oplus \fg_{1} && \text{and} &&  \fp^{-}= \fg_{0} \oplus \fg_{-1} .
\end{align*}
We observe that, because of the $\Z$-grading, $\fg_{1}$ is an ideal of $\fp^{+}.$
Fix a Cartan subalgebra $\fh  \subseteq \fg_{\0}$ and Borel subalgebras $\fb_{\0} \subseteq \fg_{\0}$ and $\fb \subseteq \fg$ so that $\fh  \subseteq \fb_{\0}$ and 
\begin{equation}\label{E:borelequality}
\fb = \fb_{\0} \oplus \fg_{1}.
\end{equation}  Let $\leq$ denote the usual dominance order on $\fh^{*}$ given by the choice of $\fb.$  Note that the Lie superalgebra $\gl (m|n)$ and the simple Lie superalgebras of types $A(m,n)$ and $C(n)$ are all of Type I (cf.\ \cite[Section 2]{Kac1}).

 Let $X^{+}_{\0} \subseteq \fh^{*}$ denote the parameterizing set of highest weights for the simple finite dimensional $\fg_{\0}$-supermodules with respect to the pair $(\fh, \mathfrak{b}_{\0})$.  Given $\lambda \in X^{+}_{\0},$ let $L_{\0}(\lambda)$ denote the simple $\fg_{\0}$-supermodule of highest weight $\lambda$ (concentrated in degree $\0$).  We view $L_{\0}(\lambda)$ as a simple $\fp^+$-supermodule via inflation through the canonical quotient $\fp^{+} \twoheadrightarrow \fg_{\0}.$  Let
\[
K(\lambda)=U(\fg)\otimes_{U(\fp^{+})} L_{\0}(\lambda),
\] be the \emph{Kac supermodule} of highest weight $\lambda.$  A standard argument using \eqref{E:borelequality} and the fact that $\fg_{1}$ is an ideal of $\fp^{+}$ shows that $K(\lambda)$ is the universal highest weight supermodule 
in $\mathcal{F}$ of highest weight $\lambda$.  By highest weight arguments, $K(\lambda)$ has simple head $L(\lambda)$ and the set 
\[
\setof{L(\lambda)}{\lambda \in X_{\0}^{+}}
\] is a complete irredundant collection of simple objects in $\mathcal{F}.$  Note that since $L(\lambda)$ is generated by a one dimensional highest weight space, it follows that $\Hom_{\fg}(L(\lambda),L(\lambda))$ is one dimensional and consists of maps given by scalar multiplication.

\subsection{}\label{SS:danstheorem}  The following theorem shows that the relative cohomology of a Kac supermodule can be nonzero in only 
finitely many degrees.  We remind the reader of the relevant notation and results which were established in Section~\ref{SS:relcohom}.

\begin{theorem}\label{T:danstheorem}Let $\lambda \in X^{+}_{\0}$ and $M\in \mathcal{F}$. Then 
$\Ext^{n}_{\mathcal{F}}(K(\lambda),M)=0$ 
for $n\gg 0$. 
\end{theorem}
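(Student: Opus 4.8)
The plan is to reduce the vanishing statement to a computation in relative cohomology and then exploit the Type I structure, in particular the fact that $K(\lambda)$ is induced from $\fp^+$ and that $\fg/\fp^+ \cong \fg_{-1}$ is finite dimensional and \emph{purely odd}. First I would use the isomorphism \eqref{E:relcohomiso2} to rewrite $\Ext^n_{\mathcal{F}}(K(\lambda),M) \cong \HH^n(\fg,\fg_{\0}; K(\lambda)^* \otimes M)$. Since $K(\lambda)^*\otimes M$ is again an object of $\mathcal{F}$, it suffices to show that $\HH^n(\fg,\fg_{\0}; N) = 0$ for $n \gg 0$ whenever $N$ lies in the image of the functor $K(\lambda)^*\otimes(-)$; but in fact I expect the cleaner route is to keep track of $K(\lambda)$ directly and use a ``change of subalgebra'' or Shapiro-type argument for the induced module.

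The key technical step is a Frobenius reciprocity / Shapiro's lemma computation. Because $K(\lambda) = U(\fg)\otimes_{U(\fp^+)} L_{\0}(\lambda)$ is \emph{freely} induced (PBW gives $U(\fg) \cong U(\fp^+)\otimes \Lambda(\fg_{-1})$ as right $U(\fp^+)$-modules), one expects
\[
\Ext^\bullet_{\mathcal{F}(\fg,\fg_{\0})}(K(\lambda), M) \cong \Ext^\bullet_{\mathcal{F}(\fp^+,\fg_{\0})}(L_{\0}(\lambda), M),
\]
where on the right $M$ is viewed as a $\fp^+$-supermodule by restriction. Then I would identify the right-hand side with a relative cohomology group $\HH^\bullet(\fp^+,\fg_{\0}; L_{\0}(\lambda)^*\otimes M)$. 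The crucial point is that $\fp^+/\fg_{\0} \cong \fg_1$ is purely \emph{odd}, so the relative cochain complex $C^p(\fp^+,\fg_{\0}; -) = \Hom_{\fg_{\0}}(\Lambda^p_s(\fg_1), -) = \Hom_{\fg_{\0}}(S^p(\fg_1), -)$ does \emph{not} truncate — but $L_{\0}(\lambda)^*\otimes M$ is finite dimensional and, after decomposing into $\fg_{\0}$-isotypic components, only finitely many symmetric powers $S^p(\fg_1)$ contain a given finite-dimensional $\fg_{\0}$-type with nonzero multiplicity in the relevant range? That is not literally true, so instead I would use: $\fg_1$ is an abelian ideal of $\fp^+$ acting on $L_{\0}(\lambda)$ by zero and on $M$ nilpotently (finite dimensional), so $\HH^\bullet(\fp^+,\fg_{\0};-)$ can be computed via a Hochschild--Serre spectral sequence with $\HH^\bullet(\fg_1;-)$ as the fiber, and $\fg_1$ being finite dimensional forces $\HH^\bullet(\fg_1; -)$ — hence the abutment — to vanish in degrees exceeding $\dim \fg_1$.

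I would organize the proof as: (1) apply \eqref{E:relcohomiso2}; (2) establish the Shapiro isomorphism $\Ext^\bullet_{\mathcal{F}(\fg,\fg_{\0})}(K(\lambda),M)\cong \HH^\bullet(\fp^+,\fg_{\0};L_{\0}(\lambda)^*\otimes M)$ using that $U(\fg)$ is free over $U(\fp^+)$ and that $\fg_{\0}\subseteq\fp^+\subseteq\fg$ with $\fg_{\0}$ reductive so that everything stays finitely semisimple over $\fg_{\0}$; (3) run the relative Hochschild--Serre spectral sequence for $\fg_{\0}\subseteq\fp^+$ (equivalently, use that $\fg_1$ is an ideal), whose $E_2$-term is $\HH^\bullet(\fg_{\0},\fg_{\0};\HH^\bullet(\fg_1;L_{\0}(\lambda)^*\otimes M))$, and observe the fiber cohomology $\HH^q(\fg_1;-)$ vanishes for $q>\dim\fg_1$ since $\fg_1$ is a finite-dimensional (purely odd, abelian) Lie superalgebra; conclude $\HH^n=0$ for $n>\dim\fg_1$. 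The main obstacle is step (2): making the Shapiro/Frobenius-reciprocity argument precise in the relative and \emph{super} setting — one must check that induction from $\fp^+$ to $\fg$ is exact and sends the relevant class of supermodules into $\mathcal{F}$, that it is adjoint to restriction as functors on the underlying even categories, and that the resulting Ext-identification is compatible with the relative cohomology description \eqref{E:relcohomiso} for the pair $(\fp^+,\fg_{\0})$; the sign bookkeeping and the verification that $\fg/\fp^+\cong\fg_{-1}$ is projective (indeed free) as a $\fg_{\0}$-module all feed into this.
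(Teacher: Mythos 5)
Your overall strategy — pass to relative cohomology via \eqref{E:relcohomiso2}, use Frobenius reciprocity/Shapiro to reduce to $\Ext^\bullet_{\mathcal{C}(\fp^+,\fg_{\0})}(L_{\0}(\lambda),M)$, then run the Hochschild--Serre spectral sequence for the ideal $\fg_1\subseteq\fp^+$ — matches the paper's proof exactly through the spectral-sequence setup. But your final step contains a genuine error that sinks the argument.

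You claim that because $\fg_1$ is finite dimensional, $\HH^q(\fg_1;-)$ vanishes for $q>\dim\fg_1$, ``hence the abutment vanishes.'' This is false for a purely odd abelian Lie superalgebra. You noted yourself, correctly, that the relative cochain complex $C^p(\fp^+,\fg_{\0};-)=\Hom_{\fg_{\0}}(S^p(\fg_1),-)$ does \emph{not} truncate, precisely because the relevant graded exterior power is an ordinary \emph{symmetric} power. The same is true at the fiber level: $\HH^q(\fg_1;M)$ is a subquotient of $S^q(\fg_1^*)\otimes M$, and $S^q(\fg_1^*)\neq 0$ for all $q$. Indeed $\HH^\bullet(\fg_1;\C)\cong S(\fg_1^*)$ is a polynomial ring, nonvanishing in every degree. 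So finite dimensionality of $\fg_1$ gives you nothing here, and the vanishing you need does not come for free from the fiber.

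The correct conclusion — which you flirted with and then wrongly dismissed — is a \emph{weight} argument, and it requires first collapsing the spectral sequence. Since the base category $\mathcal{C}(\fg_{\0},\fg_{\0})$ is semisimple ($\fg_{\0}$ reductive), $\Ext^i_{\mathcal{C}(\fg_{\0},\fg_{\0})}(-,-)=0$ for $i>0$, so the spectral sequence degenerates to
\[
\Ext^{j}_{\mathcal{C}(\fp^+,\fg_{\0})}(L_{\0}(\lambda),M)\cong \Hom_{\fg_{\0}}\bigl(L_{\0}(\lambda),\ \Ext^{j}_{\mathcal{C}(\fg_1,\{0\})}(\C,M)\bigr).
\]
Now $\Ext^{j}_{\mathcal{C}(\fg_1,\{0\})}(\C,M)$ is a $\fg_{\0}$-subquotient of $S^{j}(\fg_1^*)\otimes M$. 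All weights of $\fg_1^*$ are negatives of positive roots, so the weights of $S^{j}(\fg_1^*)$ tend to $-\infty$ as $j\to\infty$, while $M$ is finite dimensional and hence has a bounded set of weights. Therefore, for $j\gg 0$ the $\lambda$-weight space of $S^{j}(\fg_1^*)\otimes M$ is zero, $L_{\0}(\lambda)$ cannot appear as a $\fg_{\0}$-composition factor, and the $\Hom$-space above is zero. That is exactly the paper's proof; your proposal is missing this weight estimate and substitutes an incorrect finiteness claim about odd cohomology in its place.
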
 

\begin{proof} We first require some preliminary results.  By \eqref{E:relcohomiso},~\eqref{E:relcohomiso2}, and Frobenius reciprocity (cf.\ \cite[Lemma 3.1.14]{kumar}) we have 
\begin{equation}\label{E:daniso1}
\Ext^{j}_{\mathcal{F}}(K(\lambda), M) \cong \Ext^{j}_{\mathcal{C}(\fg,\fg_{\0})}(K(\lambda),M)\cong  
\Ext^{j}_{\mathcal{C}(\fp^{+},\fg_{\0})}\left( L_{\0}(\lambda),M \right)
\end{equation}
for 
$j\geq 0$. Since $\fg_{1}$ is an ideal of $\fp^{+},$ one can apply the Lyndon-Hochschild-Serre spectral sequence 
for the pair $(\fg_{1}, \{0 \})$ in $(\fp^{+}, \fg_{\0})$ (cf.\ \cite[Theorem 6.5]{BorelWallach}): 
\[
E_{2}^{i,j}=\Ext^{i}_{\mathcal{C}(\fg_{\0},\fg_{\0})}(L_{\0}(\lambda),
\Ext^{j}_{\mathcal{C}(\fg_{1},\{0\})}(\C,M))\Rightarrow   
\Ext^{i+j}_{\mathcal{C}(\fp^{+},\fg_{\0})}(L_{\0}(\lambda),M).
\]  Since $\Ext_{\mathcal{C}(\fg_{\0},\fg_{\0})}^{i}(-,-)$ calculates extensions in the category of $\fg_{\0}$-supermodules which are completely reducible, the higher $\Ext$'s vanish and the spectral sequence collapses.  This yields the following isomorphism:

\begin{equation}\label{E:daniso2}
\Hom_{\fg_{\0}}\left(L_{\0}(\lambda), \Ext^{j}_{\mathcal{C}(\fg_{1},\{0\})}(\C,M) \right) \cong   \Ext^{j}_{\mathcal{C}(\fp^{+},\fg_{\0})}\left(L_{\0}(\lambda),M\right),
\end{equation} for all $j\geq 0.$
Combining \eqref{E:daniso1} and \eqref{E:daniso2}, we obtain
\begin{equation}\label{E:daniso3}
\Ext^{j}_{\mathcal{F}}(K(\lambda), M) \cong
\Hom_{\fg_{\0}}\left(L_{\0}(\lambda), \Ext^{j}_{\mathcal{C}(\fg_{1},\{0\})}({\mathbb C},M) \right), 
\end{equation} for all $j\geq 0.$

By the definition of relative cohomology for the pair $(\fg_{1}, \{0 \}),$ $\Ext^{j}_{\mathcal{C}(\fg_{1},\{0\})}({\mathbb C},M)$ is a subquotient of $S^{j}(\fg_{1}^{*}) \otimes M.$  We observe that since $M$ is finite dimensional it has only finitely many nonzero weight spaces.  Furthermore, by our assumptions in Section~\ref{SS:Kacasses}, $\fg_{1}$ consists of positive root spaces so $\fg_{1}^{*}$ has a weight space decomposition consisting of negative roots.  Therefore for $j$ sufficently large the $\fg_{\0}$-supermodule $S^{j}(\fg_{1}^{*}) \otimes M$ cannot have a nontrivial $\lambda$ weight space.  That is, for $j$ sufficently large $L_{\0}(\lambda)$ does not appear as a composition factor of $S^{j}(\fg_{1}^{*})\otimes M$ as a $\fg_{\0}$-supermodule.  Hence it also does not appear in $\Ext^{j}_{\mathcal{C}(\fg_{1},\{0\})}({\mathbb C},M)$ and, therefore, for $j \gg 0$ the $\Hom$ space in \eqref{E:daniso3} is zero.  This implies the desired result.
\end{proof} 

\subsection{Support Varieties for Kac Supermodules} The previous theorem allows one to compute the relative support varieties and 
support varieties of Kac supermodules. We continue with the assumptions of the previous section.

\begin{corollary}\label{C:kacvariety} Let $K(\lambda)$ be a Kac supermodule of highest weight $\lambda \in X^{+}_{\0}$ and let $M\in \mathcal{F}$.  Then, 
\begin{itemize} 
\item[(a)] $\mathcal{V}_{(\g,\g_{\0})}(K(\lambda),M)=\{0\}$; 
\item[(b)] $\mathcal{V}_{(\g,\g_{\0})}(K(\lambda))=\{0\}$; 
\item[(c)] $\mathcal{V}_{({\mathfrak e},{\mathfrak e}_{\0})}(K(\lambda))=\{0\}$;
\item[(d)] $K(\lambda)$ is projective upon restriction to $U({\mathfrak e})$.
\end{itemize}   
\end{corollary}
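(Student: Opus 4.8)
The plan is to deduce the corollary from Theorem~\ref{T:danstheorem} together with the general machinery of support varieties recalled in Section~\ref{S:prelims}. First I would prove (a): by definition, $\mathcal{V}_{(\g,\g_{\0})}(K(\lambda),M)$ is the support of the finitely generated graded $R$-module $\Ext^{\bullet}_{\mathcal{F}}(K(\lambda),M)$. Theorem~\ref{T:danstheorem} says this module is concentrated in finitely many degrees, hence is finite dimensional over $\C$. A finite dimensional graded module over the (nonnegatively graded, finitely generated) ring $R$ is annihilated by some power of the augmentation ideal $R^{+}$, so $I_{(\g,\g_{\0})}(K(\lambda),M) \supseteq (R^{+})^{N}$ for some $N$, and therefore $R/I_{(\g,\g_{\0})}(K(\lambda),M)$ has maximal ideal spectrum equal to the single point corresponding to $R^{+}$, i.e.\ $\{0\}$. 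This gives (a), and (b) is the special case $M = K(\lambda)$.

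Next I would obtain (c). The cleanest route is to pass through the rank variety description. By \eqref{E:resmapsbetweenvarieties} (or the surrounding discussion on the map $\res^{*}$), we have $\mathcal{V}_{(\e,\e_{\0})}(K(\lambda))/W \cong \res^{*}\bigl(\mathcal{V}_{(\e,\e_{\0})}(K(\lambda))\bigr) \subseteq \mathcal{V}_{(\g,\g_{\0})}(K(\lambda)) = \{0\}$ by (b). Since $W$ acts on $\mathcal{V}_{(\e,\e_{\0})}(K(\lambda))$ and the quotient is a single point, and the variety is conical (it contains $0$ and the quotient map is finite), the only way the quotient can be $\{0\}$ is for $\mathcal{V}_{(\e,\e_{\0})}(K(\lambda))$ itself to be $\{0\}$: a nonzero conical $W$-stable subvariety would have nonzero image. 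This yields (c). Alternatively, one can argue directly that $\Ext^{\bullet}_{\mathcal{F}(\e,\e_{\0})}(K(\lambda),K(\lambda))$ is finite dimensional — restricting the isomorphisms in the proof of Theorem~\ref{T:danstheorem} appropriately, or noting that the restriction functor and the relation \eqref{E:relsuppvardef} are compatible — and then run the same finite-dimensionality argument over the polynomial ring $R$.

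Finally, (d) follows from (c) via the rank variety. By \cite[Theorem 6.3.2]{BKN} (stated in Section~\ref{SS:detecting}), $\mathcal{V}_{(\e,\e_{\0})}(K(\lambda)) \cong \mathcal{V}^{\text{rank}}_{(\e,\e_{\0})}(K(\lambda))$, and by (c) this rank variety is $\{0\}$. By the definition of the rank variety, this means that for every nonzero homogeneous $x \in \e_{\1}$, the supermodule $K(\lambda)$ is projective as an $\langle x \rangle$-supermodule. One then needs the statement that vanishing of the rank variety is equivalent to projectivity of $K(\lambda)$ over $U(\e)$ itself; this is exactly the content of the analogue of the Dade/Avrunin–Scott-type result established in \cite{BKN} (the rank variety detects projectivity), so I would simply cite it.

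The main obstacle I anticipate is part (c): (a) and (b) are formal once Theorem~\ref{T:danstheorem} is in hand, but (c) requires knowing that $\res^{*}$ together with the $W$-quotient is ``faithful on conical varieties'' in the sense needed, or else redoing the finite-dimensionality estimate directly for the $\e$-cohomology. Both are available in \cite{BKN}, so the real work is bookkeeping: checking that restriction to $\e$ does not destroy the finite-length conclusion of Theorem~\ref{T:danstheorem}. I would phrase the proof to lean on \eqref{E:resmapsbetweenvarieties} so as to avoid re-running the spectral sequence argument.
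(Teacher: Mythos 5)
Your proposal is correct and follows essentially the same route as the paper: (a) from Theorem~\ref{T:danstheorem} via a power of the augmentation ideal annihilating the finitely-graded $\Ext$-module, (b) as the special case $M=K(\lambda)$, (c) from the finiteness of $\res^{*}$ (equivalently the $W$-quotient) combined with conicality, and (d) by citing the result in \cite{BKN} that trivial (rank) support variety over $\fe$ implies projectivity over $U(\fe)$. The only cosmetic difference is that the paper phrases (a) directly in terms of positive-degree elements being in the radical of the annihilator ideal rather than invoking annihilation of the whole module by $(R^{+})^{N}$, and for (d) it cites \cite[Theorem~6.4.2(b)]{BKN} explicitly.
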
 

\begin{proof}  We first prove (a).  By Theorem~\ref{T:danstheorem} we can fix $N \geq 0$ so that $\Ext_{\mathcal{F}}^{j}(K(\lambda),M)=0$ for all $j \geq N.$  Then, given an element $x \in R$ of positive degree, it follows that $x^{N}$ annihilates  $\Ext_{\mathcal{F}}^{\bullet}(K(\lambda),M)$ as it is a graded $R$-module.  That is, all postive degree elements of $R$ are contained in the radical of the ideal
\[
I_{(\mathfrak{g},\mathfrak{g}_{\0})}\left(K(\lambda),M \right)=\operatorname{Ann}_{R}\left( \Ext_{\mathcal{F}}^{\bullet}(K(\lambda),M)\right).
\] However, an ideal and its radical ideal define the same variety.  Therefore, by \eqref{E:relsuppvardef}, $\mathcal{V}_{(\g,\g_{\0})}(K(\lambda),M)=\{0\}.$  Part (b) follows immediately from (a) by setting $M=K(\lambda).$  To prove (c), one notes that the map in \eqref{E:resmapsbetweenvarieties},  $\res^{*}:\mathcal{V}_{(\fe, \fe_{\0})}(K(\lambda)) \to \mathcal{V}_{(\fg, \fg_{\0})}(K(\lambda)),$ is finite-to-one.  Since  $\mathcal{V}_{({\mathfrak e},{\mathfrak e}_{\0})}(K(\lambda))$ is a conical variety, it follows that it must be $\{0 \}$.  Finally, part (d) follows from \cite[Theorem 6.4.2(b)]{BKN}.
\end{proof}

In the case of modular representations of finite groups and restricted Lie algebras,  one has that the support variety is trivial if and only if the module is projective.  In contrast, despite Corollary~\ref{C:kacvariety}(b), $K(\lambda)$ need not be a projective $\fg$-supermodule.  For example, this is already true in the principal block of $\gl(1|1).$ 

\subsection{Filtrations by Kac Supermodules}\label{SS:HWC1} We now show that for a Type I Lie superalgebra the category $\mathcal{F}$ is a highest weight category in the sense of \cite{CPS}.  This was proven for $\gl (m|n)$ by Brundan \cite{brundan1} (see also \cite[Section 3.6]{germoni2}), and is implicit for $\mathfrak{osp}(2|2n)$ in work of Cheng, Wang, and Zhang \cite{CWZ}.   We provide a general proof which includes both of these as cases.  

We continue to assume that $\fg$ is a Type I Lie superalgebra which satisfies the assumptions of 
Section~\ref{SS:Kacasses}.   Given $\lambda \in X^{+}_{\0},$ let $P(\lambda)$ denote the projective cover in $\mathcal{F}$ for the simple ${\mathfrak g}$-supermodule $L(\lambda)$, and let 
\begin{align*}
Q(\lambda) = U(\fg ) \otimes_{U(\fg_{\0})} L_{\0}(\lambda) && \text{and} && K'(\lambda) = \Hom_{U(\fp^{-})}\left(U(\fg), L_{\0}(\lambda) \right),
\end{align*} where $L_{\0}(\lambda)$ is viewed as a $U(\fp^{-})$-supermodule by inflation through the canonical quotient $\fp^{-} \twoheadrightarrow \fg_{\0}$.  Note that since $L_{\0}(\lambda)$ is projective in the category of 
finite dimensional $\fg_{\0}$-supermodules (which are completely reducible over $\fg_{\0}$) 
and the functor $U(\fg ) \otimes_{U(\fg_{\0})} -$ is exact, $Q(\lambda)$ is a projective object in $\mathcal{F}.$  By Frobenius reciprocity one sees that $P(\lambda)$ appears as a direct summand of $Q(\lambda).$

Given a supermodule $M$ in $\mathcal{F},$ say $M$ admits a $K$-filtration if there is a filtration 
\[
M=M_{0} \supsetneq M_{1} \supsetneq M_{2} \supsetneq \dotsb 
\] as $\fg$-supermodules so that for each $k \geq 0$ such that $M_{k}\neq 0,$ one has $M_{k}/M_{k+1} \cong K(\mu)$ for some $\mu \in X_{\0}^{+}.$  If $M$ admits a $K$-filtration, then write $(M:K(\mu))$ for the number of times $K(\mu)$ appears.  As we will see, this number is independent of the choice of filtration.

\begin{prop}\label{L:filtrations} The following statements hold true in $\mathcal{F}.$

\begin{enumerate} 
\item [(a)]$\dim \Hom_{\fg}(K(\lambda),K'(\mu)) = \delta_{\lambda,\mu}$ for all $\lambda, \mu \in X_{\0}^{+}.$
\item  [(b)]$\operatorname{Ext}^{1}_{\mathcal{F}}(K(\lambda),K'(\mu)) = 0$ for all $\lambda, \mu \in X_{\0}^{+}.$
\item [(c)] A $\Z$-graded object of $\mathcal{F}$ has a $K$-filtration if and only if it is a $\Z$-graded free $U(\fg_{-1})$-supermodule.
\item [(d)]$P(\lambda)$ admits a $K$-filtration.
\item [(e)]If $M$ is an object of $\mathcal{F}$ with a $K$-filtration, then, 
\[
(M:K(\mu)) = \dim \Hom_{\fg}(M, K'(\mu)).
\]
\item [(f)] In particular, one has the reciprocity formula
\[
(P(\lambda):K(\mu))=[K'(\mu):L(\lambda)].
\] Furthermore, as the weights of $K'(\mu)$ are all dominated by $\mu$, one has that $(P(\lambda):K(\lambda))=1$ and $(P(\lambda):K(\mu))=0$ unless $\mu \geq \lambda.$

\end{enumerate}
\end{prop}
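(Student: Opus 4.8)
The plan is to prove the six parts of Proposition~\ref{L:filtrations} roughly in the stated order, since each builds on the previous ones, and to exploit the parallel between $K(\lambda)=U(\fg)\otimes_{U(\fp^+)}L_{\0}(\lambda)$ and $K'(\mu)=\Hom_{U(\fp^-)}(U(\fg),L_{\0}(\mu))$ as induced/coinduced objects for the opposite parabolics. For (a), I would use Frobenius reciprocity (adjointness of restriction and coinduction from $\fp^-$) to get $\Hom_{\fg}(K(\lambda),K'(\mu))\cong\Hom_{\fp^-}(K(\lambda),L_{\0}(\mu))$; then restrict $K(\lambda)$ to $\fp^-$ and use the PBW filtration $K(\lambda)\cong S(\fg_{-1})\otimes L_{\0}(\lambda)$ as $\fg_{\0}$-modules, together with the fact that $\fg_{-1}$ consists of negative root vectors, so the only $\fp^-$-quotient that can map onto the $\fg_{\0}$-socle $L_{\0}(\mu)$ forces $\mu=\lambda$ and the dimension is exactly $1$. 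For (b), I would compute $\Ext^1_{\mathcal F}(K(\lambda),K'(\mu))\cong\Ext^1_{\mathcal C(\fg,\fg_{\0})}(K(\lambda),K'(\mu))$ using \eqref{E:relcohomiso2}, then apply Frobenius reciprocity twice (as in the proof of Theorem~\ref{T:danstheorem}) to reduce to an $\Ext^1$ over $(\fp^-,\fg_{\0})$ with coefficients in $L_{\0}(\mu)$, and then a Lyndon--Hochschild--Serre spectral sequence for the ideal $\fg_{-1}\subseteq\fp^-$ collapses (higher $\fg_{\0}$-Ext's vanish by complete reducibility) to express it in terms of $\Hom_{\fg_{\0}}(L_{\0}(\lambda),\HH^1(\fg_{-1},\{0\};L_{\0}(\mu)))$; the $\HH^1$ term is a subquotient of $\fg_{-1}^*\otimes L_{\0}(\mu)$, whose weights are bounded away from $\lambda$ when combined with the constraints already used, yielding $0$.

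For (c) I would argue that restriction to $U(\fg_{-1})$ sends $K(\mu)$ to a free module (indeed $K(\mu)|_{U(\fg_{-1})}\cong U(\fg_{-1})\otimes L_{\0}(\mu)$ by PBW), so any $K$-filtered module is $\Z$-graded free over $U(\fg_{-1})$; conversely, if $M$ is $\Z$-graded free over $U(\fg_{-1})$, I would build a $K$-filtration by descending induction on the $\Z$-grading, peeling off a top graded piece, which is killed by $\fg_{-1}$ and hence is a $\fg_{\0}$-module that pulls back to $\fp^+$ and generates a $K$-module quotient; exactness and a dimension count finish it. For (d), I would first show $Q(\lambda)=U(\fg)\otimes_{U(\fg_{\0})}L_{\0}(\lambda)$ is $\Z$-graded free over $U(\fg_{-1})$ (clear from PBW: $Q(\lambda)\cong U(\fg_{-1})\otimes U(\fg_1)\otimes L_{\0}(\lambda)$), hence has a $K$-filtration by (c); since $P(\lambda)$ is a direct summand of $Q(\lambda)$, and the class of $\Z$-graded $U(\fg_{-1})$-free modules is closed under direct summands (projectivity/freeness over the exterior-type algebra $U(\fg_{-1})$ coincide), $P(\lambda)$ is also $\Z$-graded free over $U(\fg_{-1})$ and so has a $K$-filtration by (c) again. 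For (e), given a $K$-filtration of $M$, I would induct on its length: apply $\Hom_{\fg}(-,K'(\mu))$ to the short exact sequences $0\to M_{k+1}\to M_k\to K(\nu)\to 0$, use (b) to kill the connecting $\Ext^1$ so that $\dim\Hom_{\fg}(M,K'(\mu))$ is additive over the filtration, and then use (a) so each subquotient contributes $\delta_{\nu,\mu}$; summing gives $(M:K(\mu))$, which in particular shows this multiplicity is filtration-independent.

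Finally (f) is the payoff and is essentially formal from the preceding parts. Apply (d) and (e) with $M=P(\lambda)$ to get $(P(\lambda):K(\mu))=\dim\Hom_{\fg}(P(\lambda),K'(\mu))$; since $P(\lambda)$ is the projective cover of $L(\lambda)$ in $\mathcal F$, the functor $\Hom_{\fg}(P(\lambda),-)$ is exact and $\dim\Hom_{\fg}(P(\lambda),N)=[N:L(\lambda)]$ for any $N\in\mathcal F$ (standard projective-cover argument, using that $\End_{\fg}(L(\lambda))$ is one-dimensional as noted in Section~\ref{SS:Kacasses}), so with $N=K'(\mu)$ we get $(P(\lambda):K(\mu))=[K'(\mu):L(\lambda)]$. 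The triangularity statement follows because $K'(\mu)$ has all weights $\leq\mu$ (again from the PBW description $K'(\mu)\cong S(\fg_{-1})^*\otimes L_{\0}(\mu)\cong S(\fg_1)\otimes L_{\0}(\mu)$ as $\fg_{\0}$-modules, with $\fg_1$ in positive roots so its dual in negative roots — I need to be careful with the direction of the grading for $K'$, but either way the weights are dominated by $\mu$), so $L(\lambda)$ can be a composition factor only if $\lambda\leq\mu$, and it appears exactly once when $\lambda=\mu$ since then it is the head. I expect the main obstacle to be part (c), specifically the converse direction: turning an abstract $\Z$-graded $U(\fg_{-1})$-freeness hypothesis into an honest $\fg$-stable $K$-filtration requires care in choosing the graded pieces compatibly with the full $\fg$-action (not just $\fg_{-1}$), and in verifying that each successive quotient is genuinely a Kac module rather than merely a module with the right $\fg_{\0}$-character; the spectral-sequence bookkeeping in (b) is the second most delicate point, mainly in pinning down the weights of the relevant $\HH^1$.
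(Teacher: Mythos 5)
Your outline for parts (e) and (f) matches the paper's argument essentially verbatim (induction on filtration length via (a), (b), and then counting both sides by $\dim\Hom_{\fg}(P(\lambda),K'(\mu))$). For parts (a)--(c), however, the paper does not give direct proofs at all: it cites Brundan's \cite[Lemmas 3.6 and 4.2]{brundan2}. Your direct sketch of (a) via Frobenius reciprocity for the coinduced module and the PBW description of $K(\lambda)|_{\fp^-}$ is plausible and likely close to what is behind Brundan's statement. Your sketch of (b) is muddled, though: after applying Frobenius reciprocity once (removing the coinduction) you land on $\Ext^1_{\mathcal{C}(\fp^-,\fg_{\0})}(K(\lambda),L_{\0}(\mu))$, and you \emph{cannot} then run a Lyndon--Hochschild--Serre spectral sequence to produce the term $\Hom_{\fg_{\0}}(L_{\0}(\lambda),\HH^1(\fg_{-1},\{0\};L_{\0}(\mu)))$, since $K(\lambda)$ is not inflated from $\fg_{\0}$ through $\fp^-$ -- it is induced. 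The clean finish is rather that $K(\lambda)|_{\fp^-}\cong U(\fp^-)\otimes_{U(\fg_{\0})}L_{\0}(\lambda)$ is a projective object of $\mathcal{C}(\fp^-,\fg_{\0})$, so the $\Ext^1$ vanishes outright, with no spectral sequence needed. For (c) you concede the hard direction; the paper sidesteps it by citing Brundan.

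The genuine gap is in your part (d). Your argument that $Q(\lambda)$ is $\Z$-graded $U(\fg_{-1})$-free (hence has a $K$-filtration by (c)) is fine and gives a slightly cleaner route than the paper's, which instead builds the $K$-filtration of $Q(\lambda)$ directly from the degree filtration of $N=U(\fp^+)\otimes_{U(\fg_{\0})}L_{\0}(\lambda)$ via exactness of $U(\fg)\otimes_{U(\fp^+)}-$ and the Tensor Identity. But the step from $Q(\lambda)$ to $P(\lambda)$ is where you skate by: to apply (c) to $P(\lambda)$ you need $P(\lambda)$ to be a \emph{$\Z$-graded} object of $\mathcal{F}$, i.e.\ to carry a $\Z$-grading compatible with the $\fg$-action, and to be $\Z$-graded free over $U(\fg_{-1})$. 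Your claim that ``the class of $\Z$-graded $U(\fg_{-1})$-free modules is closed under direct summands'' is not automatic -- an abstract (ungraded) direct summand of a $\Z$-graded module need not be a graded submodule, and the local-ring argument (projective $\Rightarrow$ free over $U(\fg_{-1})\cong\Lambda(\fg_{-1})$) only helps once you already know the summand is graded. This is precisely why the paper invokes a Holmes--Nakano--type argument (\cite[Theorem 3.3]{holmesnakano}) to conclude that $P(\lambda)$ can be realized as a $\Z$-\emph{graded} direct summand of $Q(\lambda)$; without that step your chain of deductions does not close.
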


\begin{proof}  Parts (a) and (b) are proved in \cite[Lemma 3.6]{brundan2} and part (c) is proved in \cite[Lemma 4.2]{brundan2}.

To prove (d), consider the $\Z$-graded $\fp^{+}$-supermodule 
\[
N:=U(\fp^{+}) \otimes_{_{U(\fg_{\0})}} L_{\0}(\lambda) 
\] whose grading is obtained from the canonical $\Z$-grading on $U(\fp^{+})$ and by viewing $L_{0}(\lambda)$ as concentrated in degree $0.$  For $k \geq 0,$ set $N_{k}$ to be the $U(\fp^{+})$-supermodule generated by all elements of degree greater than or equal to $k.$  Then one has the filtration 
\[
N=N_{0} \supset N_{1} \supset \dotsb 
\] as $\fp^{+}$-supermodules, where $N_{k}/N_{k+1}$ is a finite dimensional completely reducible $\fp^{+}$-super-
module. Apply the functor $U(\fg ) \otimes_{_{U(\fp^{+})}} -$ to this filtration.   By the PBW theorem $U(\fg)$ is a free right $U(\fp^{+})$-supermodule so the functor is exact.  This along with the Tensor Identity implies that one obtains a filtration 
\[
Q(\lambda) = P_{0}\supset P_{1} \supset \dotsb 
\] where $P_{k} \cong U(\fg) \otimes_{_{U(\fp^{+})}} N_{k}.$  Furthermore, one has that 
\[
P_{k}/P_{k+1} \cong U(\fg) \otimes_{_{U(\fb^{+})}} \left(N_{k}/N_{k+1} \right) \cong \bigoplus K(\nu)
\] where the direct sum runs over all $L_{\0}(\nu)$ which appear as composition factors of the $\fp^{+}$-supermodule $N_{k}/N_{k+1}.$  Thus $Q(\lambda)$ is a $\Z$-graded supermodule in $\mathcal{F}$ which admits a $K$-filtration.  Arguing as in \cite[Theorem 3.3]{holmesnakano}, one sees that $P(\lambda)$ is a $\Z$-graded direct summand of $Q(\lambda).$  Applying (c) it follows that $P(\lambda)$ admits a $K$-filtration.

To prove (e), fix $\mu \in X_{\0}^{+}$ and say $M = M_{0} \supset M_{1} \supset \dotsb$ is a fixed $K$-filtration.  One then argues by induction on the length of the $K$-filtration of $M,$ the long exact sequence induced by the functor $\Hom_{\fg}\left(  - , K'(\mu)\right),$ and parts (a) and (b).  Finally, to prove (f), note that both sides of the equality are counted by $\dim\Hom_{\fg}(P(\lambda),K'(\mu)).$
\end{proof}

\subsection{Highest Weight Categories}\label{SS:HWC2} 

Let us now additionally assume there exists an automorphism $\tau : \fg  \to \fg$ such that 
$\tau(\fg_{i})=\fg_{-i}$ ($i \in \Z$) and, when restricted to $\fg_{0}$, $\tau$ coincides with the Chevalley 
automorphism. For Type I \emph{simple} classical Lie superalgebras such an automorphism exists 
by the proof of \cite[Proposition 2.5.3]{Kac1}. For $\gl(m|n)$ one can take $\tau$ to be the 
supertranspose map \cite[Section 4-a]{brundan1}. 

Since morphisms are not required to preserve the $\Z_{2}$-grading (cf.\ Section~\ref{SS:prelims}), $\mathcal{F}$ is not an abelian category.  However one can consider the \emph{underlying even category} $\mathcal{F}^{\text{ev}}$ which consists of the same objects as $\mathcal{F}$ but only the homomorphisms which preserve the $\Z_{2}$-grading (i.e. the even morphisms of Section~\ref{SS:prelims}).  Then $\mathcal{F}^{\text{ev}}$ is an abelian category.  Furthermore, one has the \emph{parity change functor} 
\[
\Pi : \mathcal{F}\to \mathcal{F}
\] which is defined by $\Pi M = M$ as a vector space, the $\Z_{2}$-grading given by 
\[
\left(\Pi M \right)_{r}=M_{r+1}
\] for $r \in \Z_{2},$ and action given by 
\[
x.m = (-1)^{\p{x}}xm
\] for all homogeneous $x \in \fg$ and $m \in M.$  One checks that for $M,N$ in $\mathcal{F}$,
\[
\Hom_{\mathcal{F}}(M,N)_{\1} \cong \Hom_{\mathcal{F}}(M,\Pi N)_{\0} = \Hom_{\mathcal{F}^{\text{ev}}}(M, \Pi N).
\]  Thus one can reconstruct the category $\mathcal{F}$ from $\mathcal{F}^{\text{ev}}.$  

In particular, one can choose a projective resolution $P_{\bullet} \to M$ where all maps are even and, hence, the differentials in the complex $\Hom_{\mathcal{F}}(P_{\bullet}, N)$ are degree preserving.  The chain complex then decomposes as
\[
\Hom_{\mathcal{F}}(P_{\bullet}, N) =  \Hom_{\mathcal{F}}(P_{\bullet}, N)_{\0} \oplus \Hom_{\mathcal{F}}(P_{\bullet}, N)_{\1} \cong \Hom_{\mathcal{F}^{\text{ev}}}(P_{\bullet}, N) \oplus \Hom_{\mathcal{F}^{\text{ev}}}(P_{\bullet}, \Pi N).
\]  This in turn induces the following decomposition:
\begin{equation}\label{E:extdecomp}
\operatorname{Ext}^{\bullet}_{\mathcal{F}}(M, N) =  \operatorname{Ext}^{\bullet}_{\mathcal{F}}(M, N)_{\0} \oplus \operatorname{Ext}^{\bullet}_{\mathcal{F}}(M, N)_{\1} \cong \operatorname{Ext}^{\bullet}_{\mathcal{F}^{\text{ev}}}(M, N)  \oplus \operatorname{Ext}^{\bullet}_{\mathcal{F}^{\text{ev}}}(M, \Pi N).
\end{equation}

Since $L(\lambda)$ is generated by a one dimensional highest weight space, 
$\Hom_{\fg}\left(L(\lambda),L(\lambda) \right)$ is one dimensional and elements are given by scalar multiplication.  It follows that $L(\lambda)$ and $ \Pi L(\lambda)$ are not isomorphic via a grading preserving map.  Thus a complete irredundant set of simple supermodules in $\mathcal{F}^{\text{ev}}$ is given by 
\[
\setof{L(\lambda), \Pi L(\lambda)}{\lambda \in X^{+}_{\0}}.
\]  That is, via the correspondence $(\lambda, r) \leftrightarrow \Pi^{r} L(\lambda)=:L(\lambda,r),$ one can naturally parameterize the simple supermodules in $\mathcal{F}^{\text{ev}}$ by the set 
\[
X^{++}:= X^{+}_{\0} \times \Z_{2}.
\]  Let $P(\lambda,r)=\Pi^{r}P(\lambda)$ be the projective cover of $L(\lambda,r)$ in $\mathcal{F}^{\text{ev}}$ and let $K(\lambda,r)=\Pi^{r}K(\lambda).$  Define a partial order $\leq_{\text{ev}}$ on $X^{++}$ using the lexicographic order obtained by taking the dominance order on $X^{+}_{\0}$ given by the choice of Borel subalgebra $\fb$ and the trivial partial order on $\Z_{2};$ that is, the one for which $\0$ and $\1$ are incomparable. 

Given a supermodule $M$ in the category $\mathcal{F}^{\text{ev}},$  define the \emph{contravariant dual of $M$}, $M^{\tau},$ to be $M^{*}$ with the action of $\fg$ twisted by the automorphism $\tau$ chosen above.  In particular, one sees that $(M^{\tau})^{\tau} \simeq M$ and $L(\lambda, r)^{\tau} \simeq L(\lambda,r)$ for $(\lambda,r) \in X^{++}.$  Thus, $M \mapsto M^{\tau}$ defines a duality in the sense of \cite{CPS3}.

\begin{theorem}\label{T:HWC}  The category $\mathcal{F}^{\rm{ev}}$ with duality $M \mapsto M^{\tau},$ partial order $\leq_{\rm{ev}}$ and objects $L(\lambda,r),$ $K(\lambda,r),$ and $P(\lambda,r)$ ($(\lambda,r) \in X^{++}$) is a highest weight category with duality in the sense of \cite{CPS3}.
\end{theorem}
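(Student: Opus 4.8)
The plan is to verify the axioms of a highest weight category with duality as formulated by Cline--Parshall--Scott \cite{CPS3}, using the structural results already assembled in Section~\ref{SS:HWC1} together with the duality $M \mapsto M^\tau$ and the parity functor $\Pi$. Concretely, a highest weight category with duality on $\mathcal{F}^{\text{ev}}$ with poset $(X^{++}, \leq_{\text{ev}})$ requires: (i) finiteness/local-finiteness conditions (here automatic, since every object is finite dimensional and $\operatorname{End}_{\mathcal{F}^{\text{ev}}}(L(\lambda,r)) = \C$ by the one-dimensional highest weight space argument); (ii) the existence of standard objects $K(\lambda,r)$ with head $L(\lambda,r)$ and all other composition factors $L(\mu,s)$ with $(\mu,s) <_{\text{ev}} (\lambda,r)$; (iii) the projective cover $P(\lambda,r)$ of $L(\lambda,r)$ having a filtration with top section $K(\lambda,r)$ and remaining sections $K(\mu,s)$ with $(\mu,s) >_{\text{ev}} (\lambda,r)$; and (iv) compatibility of the duality $\tau$ with the simples, i.e.\ $L(\lambda,r)^\tau \cong L(\lambda,r)$, which is exactly the last sentence before the theorem.

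The key steps, in order: First I would transfer the facts established in $\mathcal{F}$ to $\mathcal{F}^{\text{ev}}$ by applying the parity functor. From Section~\ref{SS:Kacasses}, $K(\lambda)$ has simple head $L(\lambda)$ and, since $K(\lambda) = U(\fg)\otimes_{U(\fp^+)} L_{\0}(\lambda)$ is built from negative root vectors acting on the highest weight space, all other composition factors $L(\mu)$ satisfy $\mu < \lambda$ in the dominance order; applying $\Pi^r$ gives the corresponding statement for $K(\lambda,r)$ in $\mathcal{F}^{\text{ev}}$, which yields axiom (ii) for $\leq_{\text{ev}}$ (note $\Pi$ does not mix the $\Z_2$-label within a composition series, so the extra component of $\leq_{\text{ev}}$ causes no trouble). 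Second, axiom (iii): Proposition~\ref{L:filtrations}(d) gives that $P(\lambda)$ has a $K$-filtration, and part (f) gives $(P(\lambda):K(\lambda))=1$ with $(P(\lambda):K(\mu))=0$ unless $\mu \geq \lambda$; applying $\Pi^r$ and using $P(\lambda,r) = \Pi^r P(\lambda)$, $K(\lambda,r)=\Pi^r K(\lambda)$ transfers this to $\mathcal{F}^{\text{ev}}$, giving the required $\Delta$-filtration of projective covers with the multiplicity-one/upward-ordering condition. Third, I would check that $M\mapsto M^\tau$ is a contravariant, exact, involutive functor fixing the simples; the existence of $\tau$ with $\tau(\fg_i)=\fg_{-i}$ restricting to the Chevalley automorphism on $\fg_0$ is exactly the running hypothesis of Section~\ref{SS:HWC2}, and $L(\lambda,r)^\tau \cong L(\lambda,r)$ was already noted. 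Finally, I would remark that the standard objects are $\tau$-self-dual up to the usual identification — more precisely, $K(\lambda,r)^\tau$ is the costandard object $K'(\lambda,r) := \Pi^r \Hom_{U(\fp^-)}(U(\fg),L_{\0}(\lambda))$ — and that Proposition~\ref{L:filtrations}(a),(b) (namely $\dim\Hom_\fg(K(\lambda),K'(\mu))=\delta_{\lambda,\mu}$ and $\operatorname{Ext}^1_{\mathcal{F}}(K(\lambda),K'(\mu))=0$) is precisely the $\operatorname{Ext}$-orthogonality between $\Delta$'s and $\nabla$'s that, in the presence of the duality, is the standard reformulation of the highest weight axioms.

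The main obstacle I anticipate is bookkeeping around the $\Z_2$-grading and the parity functor rather than any deep new input: one must be careful that the poset $\leq_{\text{ev}}$ (lexicographic with the trivial order on $\Z_2$) is the correct one, i.e.\ that the $\Z_2$-component of a composition factor of $K(\lambda,r)$ is always $r$ (so incomparability in the second coordinate is harmless), and that $P(\lambda,r)$ really is the projective cover of $L(\lambda,r)$ in $\mathcal{F}^{\text{ev}}$ and not of $L(\lambda,r+\1)$ — this follows from $\Pi$ being an (exact, self-inverse) equivalence of $\mathcal{F}^{\text{ev}}$ sending $L(\lambda,r)$ to $L(\lambda,r+\1)$. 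A secondary point to get right is the precise form of the CPS \cite{CPS3} duality axioms: one should confirm that it suffices to produce a contravariant duality fixing simples together with the standard/costandard data, since then $K(\lambda,r)^\tau$ automatically has the costandard structure and no independent verification of the $\nabla$-filtration of injectives is needed. Modulo these essentially formal checks, the theorem is a direct packaging of Proposition~\ref{L:filtrations} and the $\tau$-duality of Section~\ref{SS:HWC2}.
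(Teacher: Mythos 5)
Your proof follows the same route as the paper's, which likewise reduces the theorem to Proposition~\ref{L:filtrations}(d),(f) — the $K$-filtration of $P(\lambda,r)$ with the upward-ordering and multiplicity-one conditions — and then invokes the remaining CPS axioms as routine; the paper's version is simply much terser. One small slip in your final paragraph: the check you propose, that ``the $\Z_2$-component of a composition factor of $K(\lambda,r)$ is always $r$,'' would in fact fail, since $U(\fg_{-1})$ contains odd elements and so $K(\lambda,r)$ has composition factors of both parities in $\mathcal{F}^{\text{ev}}$. The correct (and simpler) reason the second coordinate is harmless is that the order on $\Z_{2}$ is trivial, so $(\mu,s)<_{\text{ev}}(\lambda,r)$ holds precisely when $\mu<\lambda$ regardless of $s$ and $r$; all composition factors of $\operatorname{Rad} K(\lambda,r)$ have $\mu<\lambda$, and all sections $K(\mu,s)$ of $P(\lambda,r)$ other than the top one have $\mu>\lambda$ by Proposition~\ref{L:filtrations}(f). (The only parity-sensitive point is that the top section of the $K$-filtration of $P(\lambda,r)$ is $K(\lambda,r)$ and not $K(\lambda,r+\1)$, which holds because the head of $P(\lambda,r)$ is $L(\lambda,r)$.)
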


\begin{proof}  The main thing to prove is that $P(\lambda,r)$ admits a $K$-filtration with $(P(\lambda,r):K(\mu,s)) \neq 0$ only if $(\mu,s) \geq_{\text{ev}} (\lambda,r).$  However, this follows by Proposition~\ref{L:filtrations}(f) and the definition of the partial order $\leq_{\text{ev}}$.  Using this one can now verify that $\mathcal{F}^{\text{ev}}$ is a highest weight category with respect to the partial order $\leq_{\text{ev}}.$
\end{proof}

As mentioned earlier, Brundan proved a similar result for $\gl (m|n).$  Namely, that one has the decomposition 
\[
\mathcal{F}(\gl(m|n),\gl (m|n)_{\0}) = \mathcal{F}(\gl(m|n),\gl (m|n)_{\0})^{\0} \oplus \mathcal{F}(\gl(m|n),\gl (m|n)_{\0})^{\1},
 \] where $\mathcal{F}(\gl(m|n),\gl (m|n)_{\0})^{\0}$ and $\mathcal{F}(\gl(m|n),\gl (m|n)_{\0})^{\1}$ are isomorphic highest weight categories  \cite[Theorem 4.47]{brundan1}.  One can easily show that Theorem~\ref{T:HWC} for $\fg = \gl (m|n)$ follows from Brundan's slightly stronger result.  In the case of $\fg =\mathfrak{osp}(2|2n),$ Cheng, Wang, and Zhang \cite[Section 3.1]{CWZ} use but do not explicitly prove that $\mathcal{F}$ is a highest weight category.  The above theorem includes both of these cases.

\subsection{Type II Lie Superalgebras} The work of Germoni \cite{germoni} shows that Type I Lie superalgebras are rather 
special here.  In particular, Germoni demonstrates that for the Type II Lie superalgebras $\mathfrak{osp}(3|2),$ $D(2,1;\alpha)$, 
and $G(3)$ the category $\mathcal{F}^{\rm{ev}}$ is not a highest weight category with respect to the partial order $\leq_{\text{ev}}$.  
Furthermore, examples illustrate that for Type II Lie superalgebras the Kac supermodules may have nontrivial support variety.  
This can also be deduced from \cite{germoni} but for the sake of completeness we include the calculation of an example here.  

Let $\fg=\mathfrak{osp}(3|2).$  By \cite[Sections 2.1.2 and 2.5.7]{Kac1}, $\fg$ has a consistent $\Z$-grading, 
$$ \fg = \fg_{-2} \oplus \fg_{-1} \oplus \fg_{0} \oplus \fg_{1} \oplus \fg_{2}, $$ 
but no consistent grading concentrated in degrees $-1$, $0$, and $1.$  Thus it is of Type II.   Since $\fg_{\0}$ is semisimple, the category $\mathcal{F}=\mathcal{F}(\fg,\fg_{\0})$ is simply the category of all finite dimensional $\fg$-supermodules.  Fix a Cartan subalgebra $\fh \subseteq \fg_{0}$ and Borel subalgebras  $\fb_{0} \subseteq \fg_{0}$, $\fb_{\0} \subseteq \fg_{\0}$, and $\fb \subseteq \fg$ so that $\fh \subseteq \fb_{0},$ $\fb_{\0} = \fb_{0} \oplus \fg_{2}$, and $\fb = \fb_{0} \oplus \fg_{1} \oplus \fg_{2}$.    

Let $L_0(\lambda)$ be a simple finite dimensional $\fg_0$-supermodule concentrated in degree $\0$ and of highest weight $\lambda \in \fh^*$ with respect to the choice of the pair $(\fh, \fb_0)$.  It can then be viewed as a simple supermodule for the Lie superalgebra $\mathfrak{p}:=\fg_{0} \oplus \fg_{1} \oplus \fg_{2}$ via inflation through the canonical qoutient $\mathfrak{p} \twoheadrightarrow \fg_{0}$.  Define

$$\widetilde{K}(\lambda) = U(\fg) \otimes_{_{U(\mathfrak{p})}} L_0(\lambda). $$
Using the argument used in the proof of \cite[Lemma 7.3]{brundan2} one sees that $\widetilde{K}(\lambda)$ admits a finite composition series.  Thus, it makes sense to define 
\begin{equation}\label{E:Kdef2}
K(\lambda)=\widetilde{K}(\lambda)/U
\end{equation} 
as the maximal finite dimensional quotient of $\widetilde{K}(\lambda).$  As with the Kac supermodules of Section~\ref{SS:Kacasses}, one can show that $K(\lambda)$ is the universal highest weight supermodule in $\mathcal{F}$ of highest weight $\lambda$ with respect to the choice of the pair $(\fh, \fb)$.  That is, $K(\lambda)$ is the Kac supermodule for $\fg$ of highest weight $\lambda$.  Note that $K(\lambda)$ is nonzero if and only if  $\lambda$ is the highest weight of a simple finite dimensional $\fg_{\0}$-supermodule with respect to the pair $(\fh, \fb_{\0})$.  

Let $v_{\lambda}$ denote a fixed nonzero element of the one dimensional $\lambda$ weight space of $K(\lambda)$.  By the PBW theorem, note that $K(\lambda)$ is spanned by elements of the form 
\begin{equation}\label{E:Kbasis}
x^{r}y^{s}v_{\lambda},
\end{equation} where $x \in \fg_{-1}$, $y \in \fg_{-2}$, $r \in \{0,1 \}$, and $s \geq 0$.

We now consider the case when $\lambda=0.$  Since $K(0)$ is completely reducible as a $\fg_{\0}$-supermodule and $v_{0}$ is a highest weight vector of weight $0$ with respect to the pair $(\fh, \fb_{\0})$, it follows that $v_{0}$ spans a trivial $\fg_{\0}$-supermodule in $K(0)$.  That is, $y^{s}v_{0}=0$ for all $y \in  \fg_{-2}$ and $s \geq 1$.  Furthermore, given $x \in  \fg_{-1}$ one can use \cite[Proposition 1.3]{Kacnote} (or the matrix presentation of $\fg$ provided in \cite[Section 2]{Kac1}) to choose $y \in \fg_{-2}$ and $z \in  \fg_{1}$ such that $[z,y]=x$.  But $yv_{0}=0$ by the previous remark and $zv_{0}=0$ as $K(0)$ is a quotient of $\widetilde{K}(0)$.  Thus one has  
\begin{equation}
xv_{0}=[z,y]v_{0}=zyv_{0} - yzv_{0} = 0
\end{equation} for any $x \in \fg_{-1}$.  Taken together with \eqref{E:Kbasis} this implies that $K(0)$ is one dimensional and spanned by $v_{0}.$  That is, one has 
\begin{equation}\label{E:Kzero}
K(0)=L(0)=\C.
\end{equation}
Applying the calculations of \cite[Section 8]{BKN}, one has 
$$ \mathcal{V}_{(\fg,\fg_{\0})}(K(0))=\mathcal{V}_{(\fg,\fg_{\0})}(\C)\cong \mathbb{A}^{1},$$
$$ \mathcal{V}_{(\fe,\fe_{\0})}(K(0))=\mathcal{V}_{(\fe,\fe_{\0})}(\C)\cong \mathbb{A}^{1}.$$
Thus the support varieties of a Kac module for a Type II Lie superalgebra need not be trivial. 

Another consequence of \eqref{E:Kzero} is the fact that $K(0)=L(0)$ despite the fact that $\lambda=0$ is not a typical weight.  However, in \cite[Theorem 1]{Kacnote} it is asserted that for a basic classical Lie superalgebra such as $\mathfrak{osp}(3|2),$ $K(\lambda)=L(\lambda)$ if and only if $\lambda$ is typical.  This 
counterexample may be known to experts but we could not find a suitable reference in the literature.  
We expect that the result in \cite{Kacnote} is correct for Type I Lie superalgebras.

\section{Support Varieties for the Simple Supermodules of $\gl (m|n)$}\label{S:typeA}  

\subsection{}\label{SS:introTypeA}  In this section the 
results of \cite{BKN}, \cite{dufloserganova}, and \cite{serganova3} are combined to compute the support varieties 
of the simple $\gl (m|n)$-supermodules. Before doing so, let us fix various choices and 
develop the necessary notation and background.

Let $\fg=\gl (m|n)$ be the set of all $(m+n) \times (m+n)$ matrices over the complex numbers.  
If $E_{i,j}$ ($1 \leq i,j \leq m+n$) denotes the matrix unit with a one in the $(i,j)$ 
position, then the $\Z_{2}$-grading is obtained by setting $\p{E}_{i,j}=\0$ if $1 \leq i,j \leq m$ or 
$m+1 \leq i,j \leq m+n$, and $\p{E}_{i,j}=\1$, otherwise.  The bracket is given by the super commutator,
\[
[A,B]=AB-(-1)^{\p{A}\;\p{B}}BA,
\]  for homogeneous $A,B \in \gl (m|n).$  One can fix a consistent $\Z$-grading, 
\[
\fg =\fg_{-1} \oplus \fg_{0} \oplus \fg_{1},
\] by setting $\fg_{-1}$ to be the span of $\{E_{i,j} \;\vert\; m+1 \leq i \leq m+n,\ 1 \leq j \leq m \},$  
$\fg_{0}$ to be the span of $\{E_{i,j} \;\vert\; 1 \leq i,j \leq m,\text{ or } m+1 \leq i,j \leq m+n \},$ and  $\fg_{1}$ to be the span of $\{E_{i,j} \;\vert\; 1 \leq i \leq m,\ m+1 \leq j \leq m+n \}.$  Note that $\fg$ is of Type I.  Let 
$\fh \subseteq \fg_{\0},$ $\mathfrak{b}_{\0} \subseteq \fg_{\0},$ and $\mathfrak{b} \subseteq \fg$ 
denote the subalgebras of all diagonal matrices, all upper triangular even matrices, and all upper 
triangular matrices, respectively.  

Let $G_{\0}$ denote the connected reductive group with $\operatorname{Lie}(G_{\0})=\fg_{\0}.$  Then $G_{\0} \cong GL(m) \times GL(n).$  We identify $G_{\0}$ as the subgroup of the supergroup $GL(m|n)$ embedded diagonally as block matrices (cf.\ \cite[Section 2]{kujawa}).  In particular, $G_{\0}$ acts on $\fg$ by conjugation.  Let $H\subseteq G_{\0}$ denote the maximal torus such that $\operatorname{Lie}(H)=\fh$.  Similarly, let  $B_{\0} \subseteq G_{\0}$ denote the Borel subgroup such that $\operatorname{Lie}(B_{\0})=\mathfrak{b}_{\0}.$

With respect to the choice of the pair $(\fh , \mathfrak{b} )$ the root system of $\g$ can be described as follows.  Let $\varepsilon_{i} \in \fh^{*}$ be the linear functional which picks out the $i$th entry of a diagonal matrix.  With respect to these choices we then have the roots, positive roots, even roots, and odd roots, respectively:
\begin{align*}
\Phi &= \setof{\varepsilon_{i}-\varepsilon_{j}}{1 \leq i,j \leq m+n}, \\
  \Phi^{+} &= \setof{\varepsilon_{i}-\varepsilon_{j}}{1 \leq i < j \leq m+n}, \\
\Phi_{\0} &= \setof{\varepsilon_{i}-\varepsilon_{j}}{1 \leq i,j \leq m, \text{ or } m+1 \leq i,j \leq m+n},  \\
 \Phi_{\1} &= \setof{\varepsilon_{i}-\varepsilon_{j}}{1 \leq i \leq m, m+1 \leq j \leq m+n, \text{ or }  m+1 \leq i \leq m+n, 1 \leq j \leq n}.
\end{align*}
One can define a bilinear form on $\fh^{*}$ by 
\[
(\varepsilon_{i}, \varepsilon_{j}) =\begin{cases} \delta_{i,j}, &\text{ if $1 \leq i \leq m$};\\
                                                  -\delta_{i,j}, &\text{ if $m+1 \leq i \leq m+n$}.\\
\end{cases}
\] Note that the odd roots are isotropic with respect to this bilinear form.

\subsection{Defect and Atypicality} By definition the \emph{defect} of $\fg,$ which we denote by $\operatorname{def}(\fg),$ is the maximal size of a set of pairwise orthogonal, isotropic roots of $\fg$.  One can verify that $\operatorname{def}(\fg )=\operatorname{min}(m,n)$; we write $r=\operatorname{def}(\fg )$ for short.  Given $\lambda \in X^{+}_{\0}$ 
(cf. Section~\ref{SS:Kacasses}),  the \emph{atypicality} of $\lambda,$ which we denote $\atyp (\lambda)$, is 
defined to be the maximal size of a set of pairwise orthogonal, isotropic roots which are also orthogonal to $\lambda + \rho,$ where 
\[
2\rho:=\sum_{\alpha \in \Phi^{+} \cap \Phi_{\0}} \alpha - \sum_{\alpha \in \Phi^{+} \cap \Phi_{\1}} \alpha.
\] 

By definition one has
\begin{equation}\label{E:atypvsdef}
\operatorname{atyp}(\lambda) \leq \operatorname{def}(\fg).
\end{equation}
Furthermore, note that atypicality is invariant under choice of Borel subalgebra so it makes sense to define the \emph{atypicality} of $L(\lambda)$ to be $\atyp (L(\lambda))=\atyp (\lambda).$  See \cite[Section 7]{BKN} for a more detailed discussion of defect and atypicality.  

\subsection{}\label{SS:quibbles}  When the atypicality of a simple $\gl (m|n)$-supermodule $L(\lambda)$ is zero, then necessarily $L(\lambda)$ is projective in $\mathcal{F}$ \cite[Theorem 1]{Kacnote}.  In this case $\HH^{t}(\fg,\fg_{\0}; L(\lambda)^{*} \otimes L(\lambda))=0$ for $t \geq 1$ and by \eqref{E:relsuppvardef} one has $\mathcal{V}_{(\fe ,\fe_{\0})}(L(\lambda)) = \mathcal{V}_{(\fg ,\fg_{\0})}(L(\lambda))=\{0 \}.$  Therefore we will primarily be interested in the case when the atypicality of $L(\lambda)$ is strictly greater than zero.  In this case one can assume $\lambda$ is integral up to tensoring by some one dimensional representation.  As it has no effect on support varieties by Proposition~\ref{P:translationfunctors}, we will freely and implicitly assume $\lambda$ is integral.  In particular, this is relevent to the use of the results of \cite{serganova3} in which this reduction is implicitly used.     

\subsection{Detecting Subalgebra for $\gl (m|n)$}\label{SS:eforgl}  Let 
$\mathcal{F}=\mathcal{F}(\fg, \fg_{\0})$ and, recalling the relative cohomology introduced 
in Section~\ref{SS:prelims}, let
\[
R = \HH^{\bullet}(\fg,\fg_{\0};\C),
\] 
be the cohomology ring of the category $\mathcal{F}.$  In \cite[Section 8]{BKN} we showed 
that $\fg$ admits a detecting subalgebra $\fe$ in the sense of Section~\ref{SS:detecting} and one can use 
it to calculate $R$ explicitly.  Let us sketch how this is done as it will be needed in what follows.  

As in \cite[Section 8.10]{BKN}, one can identify $\fe_{\1} \subseteq \fg_{\1}$ as the subspace spanned by the distinguished basis 
\begin{align}\label{E:ebasis}
x_{t} := E_{m+1-t,m+t} + E_{m+t,m+1-t} & &  \text{for $t=1, \dotsc, r.$}
\end{align} Let $\fe_{\0}=\operatorname{Stab}_{\fg_{\0}}(\fe_{\1}).$  Then $\fe = \fe_{\0} \oplus \fe_{\1}$ is a detecting subalgebra of $\fg .$ 

Let $W = \Z_{2}^{r} \rtimes \Sigma_{r},$ where $\Sigma_{r}$ is the symmetric group on $r$ letters and $\Sigma_{r}$ acts on $\Z_{2}^{r}$ by place permutation.  Let $X_{i} \in \fe_{\1}^{*}$ be given by $X_{i}(x_{j})=\delta_{i,j}.$  Then the symmetric algebra $S(\fe_{\1}^{*})$ is isomorphic to the polynomial ring $\C[X_{1}, \dotsc , X_{r}].$  The group $W$ naturally acts on $\C[X_{1}, \dotsc , X_{r}]$ by letting the $i$th element of $\Z_2^{r}$ act by $X_{i}\mapsto -X_{i},$ and by letting $\Sigma_{r}$ act by permuting $X_{1}, \dotsc ,X_{r}.$  By \cite[Theorem 4.4.1]{BKN} the canonical restriction map defines the following isomorphism of graded rings
\begin{align}\label{E:cohomringiso}
R  \cong \HH^{\bullet}(\fe,\fe_{\0};\C)^{W} =S(\fe_{\1}^{*})^{W}= \C[X_{1}, \dotsc , X_{r}]^{W} =\C[\dot{X}_{1}, \dotsc , \dot{X}_{r}],
\end{align}
where $\dot{X}_{1}, \dotsc , \dot{X}_{r}$ denote the elementary symmetric polynomials in the variables $X_{1}^{2}, \dotsc , X_{r}^{2}.$  The $\Z$-grading is given by $\dot{X}_{t}$ being of degree $2t,$ $t=1,\dotsc ,r.$

Note that $R$ is a polynomial ring and, in particular, has no nonzero nilpotent elements.  This point will be crucial in the proof of Proposition~\ref{P:diagrams}.

\subsection{The Case of Full Atypicality}\label{SS:fullatyp}  We first consider the special case of $\fg =\gl (k|k)$ and a simple supermodule $L(\lambda)$ of atypicality $k=\operatorname{def}(\fg)$.   
Our analysis will rely on the rank variety description of the $\fe$ support variety which was discussed in Section~\ref{SS:detecting} and a result from \cite{dufloserganova}.

\begin{prop}\label{L:fullatypicality} Let $L(\lambda)$ be a simple $\gl (k|k)$-supermodule of atypicality $k.$  Then  
\[
\mathcal{V}_{(\fa, \fa_{\0})}(L(\lambda))=\mathcal{V}_{(\fa, \fa_{\0})}(\C)
\] where $\fa$ stands for either $\fg$ or $\fe.$

\end{prop}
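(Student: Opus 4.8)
The plan is to reduce the statement for $\fg$ to the statement for $\fe$, translate the $\fe$-statement into a question about restricting $L(\lambda)$ to the rank-one subalgebras $\langle x\rangle$, $x\in\fe_{\1}$, and then feed in a nonvanishing theorem of Duflo--Serganova which is available because $L(\lambda)$ is maximally atypical.

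First I would dispose of $\fa=\fg$, assuming the result for $\fa=\fe$. For any $M\in\mathcal{F}$ one always has $\mathcal{V}_{(\fa,\fa_{\0})}(M)\subseteq\mathcal{V}_{(\fa,\fa_{\0})}(\C)=\operatorname{MaxSpec}(R)$, so only the reverse inclusion is at issue. By \eqref{E:cohomringiso} we have $\mathcal{V}_{(\fe,\fe_{\0})}(\C)\cong\fe_{\1}\cong\mathbb{A}^{k}$ and $\mathcal{V}_{(\fg,\fg_{\0})}(\C)\cong\fe_{\1}/W$, while \eqref{E:resmapsbetweenvarieties} shows $\res^{*}$ carries $\mathcal{V}_{(\fe,\fe_{\0})}(L(\lambda))$ onto $\mathcal{V}_{(\fe,\fe_{\0})}(L(\lambda))/W\subseteq\mathcal{V}_{(\fg,\fg_{\0})}(L(\lambda))$. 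Hence once $\mathcal{V}_{(\fe,\fe_{\0})}(L(\lambda))=\fe_{\1}$ is established, $\mathcal{V}_{(\fg,\fg_{\0})}(L(\lambda))\supseteq\fe_{\1}/W=\mathcal{V}_{(\fg,\fg_{\0})}(\C)$ and equality follows; so it remains to prove $\mathcal{V}_{(\fe,\fe_{\0})}(L(\lambda))=\fe_{\1}$.

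Next I would invoke the rank variety description \cite[Theorem 6.3.2]{BKN}: $\mathcal{V}_{(\fe,\fe_{\0})}(L(\lambda))\cong\mathcal{V}^{\mathrm{rank}}_{(\fe,\fe_{\0})}(L(\lambda))$ is a Zariski-closed, conical, $W$-stable subset of $\fe_{\1}\cong\mathbb{A}^{k}$. Since $\mathbb{A}^{k}$ is irreducible of dimension $k$, any closed subset of dimension $k$ is all of $\mathbb{A}^{k}$; hence it suffices to show that the locus of $x\in\fe_{\1}$ for which $L(\lambda)$ is \emph{not} projective on restriction to $\langle x\rangle$ is Zariski dense, as the rank variety, being closed, then contains its closure $\fe_{\1}$. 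For $0\neq x\in\fe_{\1}$ one has $\langle x\rangle=\C x\oplus\C[x,x]$ with $[x,x]=2x^{2}$ central, and a direct computation of the relative cohomology of $(\langle x\rangle,\langle x\rangle_{\0})$ shows that $L(\lambda)$ is projective over $\langle x\rangle$ exactly when $x$ acts on the generalized zero eigenspace of $x^{2}$ on $L(\lambda)$ with all Jordan blocks of even size; when $x^{2}$ acts as zero this reduces to vanishing of the Duflo--Serganova fibre $\ker(x)/\operatorname{im}(x)$.

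The hypothesis $\atyp(\lambda)=k=\operatorname{def}(\gl(k|k))$ and \cite{dufloserganova} enter at the last step. For a maximally atypical simple $\gl(k|k)$-supermodule the Duflo--Serganova reduction $\gl(k|k)_{x}$ at an odd self-commuting element $x$ of maximal rank $k$ is trivial, and the fibre of $L(\lambda)$ there is nonzero, since the fibre of a simple supermodule is nonzero precisely when its atypicality is at least the rank of $x$; equivalently the associated variety of $L(\lambda)$ is the whole cone $\{y\in\fg_{\1}:[y,y]=0\}$. The remaining task --- which I expect to be the main obstacle --- is to transfer this nonvanishing from that cone, which meets $\fe_{\1}$ only at the origin, to the rank variety inside $\fe_{\1}$: one analyzes, for a generic $x\in\fe_{\1}$, the subspace of $L(\lambda)$ on which $x^{2}$ acts nilpotently (a proper subspace carved out by explicit weight conditions coming from the distinguished basis \eqref{E:ebasis}), and shows, using the Duflo--Serganova computation together with the $\Z_{2}$-grading, that $x$ acts on that subspace with an odd-sized Jordan block, so that $L(\lambda)$ is non-projective over $\langle x\rangle$ for a dense set of $x$. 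Granting this, the two reductions above finish the proof.
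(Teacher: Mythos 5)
Your overall frame is correct and matches the paper: reduce $\fa=\fg$ to $\fa=\fe$ via $\res^{*}$ and $\mathcal{V}_{(\fg,\fg_{\0})}(\C)\cong\fe_{\1}/W$, then use the rank variety description to reduce the $\fe$-statement to finding a Zariski-dense set of $x\in\fe_{\1}$ where $\Res_{\langle x\rangle}L(\lambda)$ is not projective; and your characterization of projectivity over $\langle x\rangle$ in terms of Jordan blocks on the generalized zero eigenspace of $x^{2}$ is consistent with \cite[Proposition 5.2.1]{BKN}, which the paper cites for precisely this step.

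However, the heart of the argument is missing, and you say so yourself. Everything up to and including the observation that the Duflo--Serganova associated variety/fibre is nonvanishing for maximally atypical $L(\lambda)$ concerns self-commuting odd elements, i.e.\ the cone $\{y\in\fg_{\1}:[y,y]=0\}$, which meets $\fe_{\1}$ only at the origin. The transfer to generic $x_{D}\in\fe_{\1}$ (for which $[x_{D},x_{D}]\neq 0$) is precisely the content of the proposition, and your sketch --- look at the generalized $0$-eigenspace of $x^{2}$ and ``show, using the Duflo--Serganova computation together with the $\Z_{2}$-grading, that $x$ acts \dots with an odd-sized Jordan block'' --- is not carried out, and it is not clear how it could be made to follow from the fibre nonvanishing you invoke. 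The paper does not go through the DS associated variety at all. Instead it uses a different Duflo--Serganova input, \cite[Lemma 10.4]{dufloserganova}: $L(\lambda)$ is $\Z$-graded, $L(\lambda)=L_{0}\oplus L_{-1}\oplus\dotsb$, with $L_{0}\cong S\boxtimes S^{*}$ as a $GL(k)\times GL(k)$-module, $L_{0}$ containing a trivial module for the diagonal $K\cong GL(k)$ but $L_{-1}$ not. From this it manufactures a nonzero $K$-equivariant trace pairing $\tau:L_{0}\to\C$, sets $v=g_{D}^{-1}v_{0}$ for $v_{0}$ spanning the trivial $K$-line, checks $x_{D}v=0$ (Claim 1) by $\Z$-grading and $K_{D}^{-}$-equivariance, and then rules out $x_{D}w=v$ (Claim 2) by showing this would force $\chi_{S}(D^{-2})=0$, which fails on a dense open set of diagonal $D$. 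This trace/character argument is the real work, and there is no analogue of it in your proposal, so the proof as written has a genuine gap exactly where you flagged ``the main obstacle.''
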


\begin{proof} We consider the case $\fa =\fe.$   The case $\fa =\fg$ then follows immediately from \eqref{E:resmapsbetweenvarieties} and the fact that $\operatorname{res}^{*}(\mathcal{V}_{(\fe ,\fe_{\0})}(\C))=\mathcal{V}_{(\fg ,\fg_{\0})}(\C)$ \cite[Section 6]{BKN}.  Moreover, for the purposes of the proof one can take $\fe_{\1}$ to be spanned by the matrices $E_{t,k+t}+E_{k+t,t}$ ($t=1,\dotsc ,k$) because this choice of $\fe_{\1}$ is conjugate under the Weyl group of $\fg_{\0}$ to the one given in Section~\ref{SS:eforgl}.  

Let $\mathfrak{t}$ denote the set of diagonal $k \times k$ matrices. Then one has
\[
\fe_{\1}= \left\{ x_{D}=\left(\begin{matrix} 0 & D \\
                           D & 0
\end{matrix} \right) \;\vert\; D \in \mathfrak{t}\right\}.
\]  We will prove that $L(\lambda)$ contains a trivial direct summand as an $\langle x_{D} \rangle$-supermodule where $x_{D}$ ranges over a dense open set in $\fe_{\1}$.  By the rank variety description (cf.\ Section~\ref{SS:detecting}), this will imply $x_{D} \in \mathcal{V}_{(\fe,\fe_{\0})}(L(\lambda))$ for all $x_{D}$ in this open set.  This then implies the result as the set of all such $x_{D}$ is dense in $\mathcal{V}_{(\fe,\fe_{\0})}(\C)$ and $\mathcal{V}_{(\fe,\fe_{\0})}(L(\lambda))$ is a closed subset of $\mathcal{V}_{(\fe,\fe_{\0})}(\C).$

By the structure of indecomposable $\langle x_{D} \rangle$-supermodules provided in \cite[Proposition 5.2.1]{BKN}, to prove that $L(\lambda)$ has a trivial $\langle x_{D}\rangle$ direct summand it suffices to prove the following two claims.

\medskip
 \label{claim}
\noindent \emph{Claim 1: There exists a vector $v \in L(\lambda)$ such that $x_{D}v=0.$}

\noindent \emph{Claim 2: There does not exist a vector $w \in L(\lambda)$ such that $x_{D}w=v.$}
\medskip

We first require some preliminaries.  Let $T$ denote the set of invertible $k \times k$ matrices.  Throughout, let $D$ be an element of $T$ and let $I_{k}$ denote the $k \times k$ identity matrix.  Set

\[
g_{D}=\left(\begin{matrix} I_{k} & 0 \\
                           0 & D^{-1}
\end{matrix} \right) \in G_{\0}.
\] 
Furthermore, let $x_{D}^{+}, x_{D}^{-} \in \fg$ be given by
\[
x^{+}_{D}=\left(\begin{matrix} 0 & D \\
                           0 & 0
\end{matrix} \right) \hspace{.5in} \text{and} \hspace{.5in} x^{-}_{D}=\left(\begin{matrix} 0 & 0 \\
                           D & 0
\end{matrix} \right).
\] Then $x_{D}^{+} \in \fg_{1},$ $x_{D}^{-}\in \fg_{-1},$ and $x_{D}=x_{D}^{-} + x_{D}^{+}.$

Let $\varphi_{D}: G_{\0} \to G_{\0}$ be the automorphism given by conjugation by $g_{D};$ that is,
\[
h \mapsto g_{D}hg_{D}^{-1}.
\]  Let $K$ denote the image in $G_{\0}$ of $GL(k)$ under the diagonal embedding and set
\begin{equation}\label{E:twistingK}
K^{+}_{D}=\varphi_{D}(K)  \hspace{.5in}\text{and}\hspace{.5in}  K^{-}_{D}=\varphi_{D^{-1}}(K).
\end{equation}
One can check that 
\begin{equation}\label{E:KDdef}
\operatorname{Stab}_{G_{\0}}\left(x^{\pm}_{D} \right) = K_{D}^{\pm}.
\end{equation}

If $M$ is a $G_{\0}$-module, then let $M^{\varphi_{D}}$ denote the twist of $M$ by $\varphi_{D}$.  The map $\alpha_{D}:M \to M^{\varphi_{D}}$ given by $m \mapsto g_{D}m$ is then an isomorphism of $G_{\0}$-modules. In particular, $M$ and $M^{\varphi_{D}}$ are also isomorphic as $K$-modules.  That is, by \eqref{E:twistingK} $M$ has the same structure as a module for $K,$ $K_{D}^{+},$ and $K^{-}_{D}$.  In particular, $M$ will have a trivial $K^{\pm}_{D}$-submodule if and only if it has a trivial $K$-submodule.  Applying this observation and results on the $K$-module structure of $L(\lambda)$ in \cite[Lemma 10.4]{dufloserganova}, one obtains the following key fact.

\medskip
\noindent $(*)$ \label{fact}  \emph{The supermodule $L(\lambda)$ is a $\mathbb{Z}$-graded $\fg$-supermodule, 
\[
L(\lambda) = L_{0} \oplus L_{-1} \oplus L_{-2} \oplus \dotsb,
\] where the above direct sum decomposition is as $G_{\0}$-modules.  In this decomposition $L_{0}$ contains a trivial $K$-submodule whereas $L_{-1}$ does not contain a trivial $K$-submodule.  Consequently, $L_{0}$ contains a trivial $K^{\pm}_{D}$-submodule whereas $L_{-1}$ does not contain a trivial $K^{\pm}_{D}$-submodule. } 
\medskip

Furthermore, by \cite[Lemma 10.4]{dufloserganova} one has that $L_{0} \cong S \boxtimes S^{*}$ as a $G_{\0}\cong GL(k) \times GL(k)$-module, where $S$ denotes a simple $GL(k)$-module. Hence,
\begin{equation}\label{E:Lzeroiso}
 L_{0} \cong S\otimes S^{*}
\end{equation}
as a $K$-module.  Thus there is a nontrivial $K$-linear map 
\[
\tau: L_{0} \to \C
\] given by $s \otimes f \mapsto f(s).$  If one fixes a nonzero vector $v_{0} \in L_{0}$ which spans a trivial $K$-module then by \eqref{E:Lzeroiso} one has 
\begin{equation}\label{E:tauonvzero}
\tau(v_{0}) \neq 0.
\end{equation}  More generally, if $\rho : GL(k) \to GL(S)$ is the representation corresponding to the $GL(k)$-module $S$ and 
\[
g=\left(\begin{matrix} A & 0 \\
                           0 & B
\end{matrix} \right) \in G_{\0},
\] then 
\begin{equation}\label{E:tauong}
\tau(gv_{0}) = \frac{\operatorname{trace}(\rho(B^{-1}A))}{\dim S}\tau(v_{0}).
\end{equation}  The above equality can be verified by fixing a basis for $S,$ the corresponding dual basis for $S^{*},$ and using them to compute both sides of \eqref{E:tauong}.

In addition, one can define a $K$-linear map 
\[
\widetilde{\tau}: L_{-1} \to \C 
\] by $m \mapsto \tau(x_{I_{k}}^{+}m).$  Note that by the $\Z$-grading, if $m\in L_{-1},$ then $x_{I_{k}}^{+}m \in L_{0}$ so it makes sense to apply $\tau.$ Also note that $K$-linearity follows from \eqref{E:KDdef} (applied to the case $D=I_{k}$).  However, by $(*)$ one knows that $L_{-1}$ does not contain any trivial $K$-modules; hence,
\begin{equation}\label{E:tildetauiszero}
\widetilde{\tau} =0.
\end{equation}

We are now prepared to prove Claims $1$ and $2$.  First, consider Claim $1.$  By \eqref{E:twistingK} one has that
\begin{equation}\label{E:vdef}
v:=\alpha_{D^{-1}}(v_{0})=g_{D^{-1}}v_{0}=g_{D}^{-1}v_{0}
\end{equation}
spans a trivial $K_{D}^{-}$-module in $L_{0}.$ By the $\mathbb{Z}$-grading on $L(\lambda)$ one has that $x_{D}v=x_{D}^{-}v.$  If $x_{D}^{-}v \neq 0,$ then this would imply by \eqref{E:KDdef} that there is a trivial $K_{D}^{-}$-module in $L_{-1}$ spanned by $x_{D}^{-}v,$ contradicting $(*)$.  Therefore $x_{D}v=0.$

To prove Claim $2,$ assume that there exists $w \in L$ such that $x_{D}w=v.$  If one uses the $\Z$-grading in $(*)$ and writes $w=w_{0} + w_{-1}+ \dotsb + w_{t}$ so that $w_{i} \in L_{i}$ ($i=0, -1, -2, \dotsc) ,$ then one sees by the $\Z$-grading that $x_{D}w=v$ only if 
\begin{equation}\label{E:wequals}
x_{D}^{+}w_{-1}=v.
\end{equation}

Recall that $G_{\0}$ acts on $\fg$ by conjugation and that this is compatible with the action of $\fg$ on $L(\lambda)$ in the sense that $g(xm)=(gxg^{-1})(gm)$ for all $g \in G_{\0},$ $x \in \fg ,$ and $m \in L(\lambda).$  Therefore, using \eqref{E:wequals}, \eqref{E:vdef}, and the fact that $x_{D}^{+}=g_{D}x_{I_{k}}^{+}g^{-1}_{D}$, one has $x_{D}w=v$ only if 
\begin{equation}\label{E:keyequals}
x_{I_{k}}^{+}\left( g^{-1}_{D}w_{-1}\right)= g_{D}^{-1}v=g_{D}^{-2}v_{0}.
\end{equation}
Applying $\tau$ to both sides of \eqref{E:keyequals} and using that $\widetilde{\tau}=0$ by \eqref{E:tildetauiszero}, yields
\[
0=\widetilde{\tau}\left(g_{D}^{-1}w_{-1} \right) = \tau\left(g_{D}^{-2}v_{0} \right).
\]  However, applying \eqref{E:tauong} and the fact that $\tau(v_{0})\neq 0$, one obtains
\[
0 = \operatorname{trace}\left(\rho (D^{-2}) \right)=\chi_{S}(D^{-2}),
\] where by definition $\chi_{S}=(\operatorname{trace} \circ \rho) : T \to \C$ is the character defined on the torus $T \subseteq GL(k)$ by the representation $\rho.$  In short, one concludes that if $x_{D}w=v$ for some $w \in L(\lambda),$ then 
\[
\chi_{S}(D^{-2})=0.
\]

However, the map $T \to \C$ given by $D \mapsto \chi_{S}(D^{-2})$ is continuous and not identically zero so there is a dense open set $\mathcal{O}\subseteq T \subseteq \mathfrak{t}$ for which $\chi_{S}(D^{-2})\neq 0.$  Thus, for $D \in \mathcal{O}$ there does not exist $w \in L(\lambda)$ for which $x_{D}w=v.$  That is, since the map $\mathfrak{t}\to \fe_{\1}$ given by $D \mapsto x_{D}$ is a homeomorphism, $\left\{x_{D} \;\vert\; D \in \mathcal{O} \right\}$ is a dense open set of $\fe_{\1}$ for which Claims $1$ and $2$ are both satisfied.  As was explained at the beginning of the proof, this suffices to prove the theorem.
\end{proof}

\subsection{Translation Functors}\label{SS:translationfunctors}  By, for example, \cite[Section 3]{serganova3} the category $\mathcal{F}$ decomposes into blocks given by central characters.  Thus one has 
\begin{equation}\label{E:decompofF}
\mathcal{F} = \bigoplus_{\chi} \mathcal{F}_{\chi},
\end{equation}
where the direct sum runs over the central characters of $U(\fg)$ and where $\mathcal{F}_{\chi}$ denotes the block corresponding to the character $\chi.$  Furthermore, since each simple supermodule in a given fixed block has the same atypicality (cf.\ \cite[Corollary 3.3]{serganova3}), it makes sense to define the atypicality of a block to be the atypicality of a simple supermodule in the block.  The first main result of \cite{serganova3} is the following theorem.

\begin{theorem}\label{T:atypicalityequivalence}\cite[Theorem 3.5]{serganova3}  The blocks $\mathcal{F}_{\chi}$ and $\mathcal{F}_{\chi'}$ are equivalent as categories if and only if they have the same atypicality.
\end{theorem}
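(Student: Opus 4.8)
This theorem is quoted from \cite[Theorem~3.5]{serganova3}, so in the paper itself the ``proof'' is a citation; what follows is a sketch of how I would establish it. Write $k=\atyp(\mathcal{F}_{\chi})$ for the common atypicality of the simple supermodules in a block (well defined by \cite[Corollary~3.3]{serganova3}). The substantive direction is ``if'': any two blocks of atypicality $k$ are equivalent. The plan is to produce these equivalences from \emph{translation functors} --- tensoring with the natural $\gl(m|n)$-module $V$, its dual, and their exterior powers, then projecting onto a block --- and to organize the bookkeeping with weight diagrams, so that the whole problem reduces to a finite combinatorial statement: which such functors are equivalences, and how many must be composed. The ``only if'' direction is comparatively formal and I treat it last.

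First I would fix the combinatorics. After the integrality reduction of Section~\ref{SS:quibbles}, to $\lambda\in X^{+}_{\0}$ one attaches the two finite multisets of integers read off from $\lambda+\rho$ --- one from the coordinates $\ve_{i}$ with $1\le i\le m$, the other from the negatives of the coordinates $\ve_{j}$ with $m+1\le j\le m+n$ --- and records, at each integer position, whether a value occurs in both multisets (an \emph{atypical node}), in neither (an \emph{empty node}), or in exactly one (a one-sided node, of one of two types according to the colour). By \cite[Section~3]{serganova3}, the central character of $\lambda$, hence the block $\mathcal{F}_{\chi}\ni L(\lambda)$, depends only on the \emph{core} of this diagram --- the positions and types of its one-sided nodes --- while $\atyp(\lambda)$ equals the number of atypical nodes. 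So two blocks have the same atypicality exactly when their diagrams carry the same number of atypical nodes.

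The heart of the argument is then two lemmas. \textbf{(Equivalence lemma.)} Suitable translation functors, built from $V\otimes-$, $V^{*}\otimes-$, $\wedge^{2}V\otimes-$ and projection, realize exactly the moves of the diagram that slide a single node --- a one-sided node, or an atypical node moved as an indivisible unit --- across an adjacent empty node, and each such functor is an equivalence of categories, with quasi-inverse the translation functor running the opposite way. One proves this by computing the effect of $V\otimes-$ and its variants on Kac supermodules (short exact sequences expressing $V\otimes K(\mu)$ through Kac supermodules whose highest weights differ from $\mu$ by a single $\ve_{i}$), checking that on the two blocks in play the functor sends Kac supermodules to Kac supermodules and simples to simples bijectively, and verifying that the unit and counit of the adjunction are isomorphisms. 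The conceptual point, in contrast with the reductive case, is that sliding a node across an \emph{empty} node neither creates nor destroys an atypical node, so no ``isotropic wall'' is crossed and the functor is an honest equivalence rather than a projection onto a wall. \textbf{(Connectivity lemma.)} Any two diagrams with the same number of atypical nodes are joined by a finite sequence of the moves above: since every diagram has infinitely many empty positions, one can slide all nodes of the first diagram, one at a time and each atypical node kept intact, into a far-away well-separated parking region, and then slide them back into the configuration of the second diagram from the outside in, never forcing two nodes to collide. Composing the equivalences supplied by the Equivalence lemma along such a path proves ``if''. The same ideas, together with a stabilization step (adjoining far-away one-sided nodes of each colour, which leaves the atypicality unchanged, is realized by an equivalence), upgrade the conclusion to the sharper form used in Section~\ref{SS:fullatyp}: a block of atypicality $k$ of any general linear Lie superalgebra is equivalent to the principal block of $\gl(k|k)$.

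For ``only if'' one exhibits a categorical invariant recovering $k$. Because $\mathcal{F}$ is a highest weight category with duality (Theorem~\ref{T:HWC}) satisfying BGG reciprocity (Proposition~\ref{L:filtrations}(f)), each indecomposable projective $P(\lambda)$ has a Kac filtration; for $\lambda$ generic in its block the number of its Kac sections is $2^{k}$ and the Loewy length of $P(\lambda)$ is $2k+1$, and the maxima of these quantities over the block are preserved by any equivalence of categories --- an equivalence preserves projectives, the standard objects up to relabelling, Kac-filtration multiplicities, and radical series. Hence equivalent blocks have equal atypicality. I expect the Equivalence lemma to be the main obstacle: it requires a genuine analysis of $V\otimes-$ and $\wedge^{2}V\otimes-$ on standard and simple objects, i.e.\ enough control of the Kazhdan--Lusztig-type combinatorics of $\gl(m|n)$ (ultimately the input of \cite{serganova3} and the multiplicity-one property of the decomposition numbers), to certify that the empty-node wall-crossing functor and its adjoint are mutually inverse. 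By comparison the Connectivity lemma is elementary combinatorics of integer sequences, and the ``only if'' direction is formal.
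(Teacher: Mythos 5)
The paper does not prove this theorem; it is imported verbatim as \cite[Theorem 3.5]{serganova3}, with the only discussion being the paragraph that follows, describing how translation functors $T_{E}^{\chi,\chi'}$ between ``adjacent'' blocks of the same atypicality are composed to produce the equivalence. Your sketch of the ``if'' direction is a faithful reconstruction of that strategy: the weight-diagram bookkeeping, the Equivalence lemma (a translation functor sliding a node over an adjacent empty node is invertible because no atypical node is created or destroyed), and the Connectivity lemma (parking nodes far away and reassembling) are essentially how Serganova's Lemma 5.5 and its iterations work, and your sharper form (any atypicality-$k$ block is equivalent to the principal block of $\gl(k|k)$) is exactly the paper's Theorem~\ref{T:equivalencetoprincipalblock}.

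Your ``only if'' direction goes beyond what the paper or its citation context spell out, and while the idea --- exhibit a categorical invariant computing $k$ --- is sound, the specific numbers you quote need care. The claim that the generic indecomposable projective in an atypicality-$k$ block has exactly $2^{k}$ Kac sections and Loewy length $2k+1$ is plausible (it matches $\gl(1|1)$ and is consonant with the cap-diagram combinatorics that was later made explicit by Brundan--Stroppel) but is not something you can assert as ``formal''; it is a nontrivial structural statement about projectives in these blocks. A cleaner invariant, and one internal to this very paper, is the complexity of the block: by Theorem~\ref{T:typeA}, every simple in a block of atypicality $k$ has $\mathcal{V}_{(\fe,\fe_{\0})}(L(\lambda))$ of dimension exactly $k$, and the dimension of a support variety is determined by the rate of growth of a minimal projective resolution, which is a Morita invariant. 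Using that sidesteps the need to pin down the exact Kac-filtration length or Loewy length of a generic projective. (Of course this would be circular inside the present paper, which uses Theorem~\ref{T:atypicalityequivalence} to prove Theorem~\ref{T:typeA}; it is only an alternative route if one is writing an independent proof, which is the exercise here.)
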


Let us mention how one constructs the functors which provide this equivalence of categories.  Let $\operatorname{pr}_{\chi}: \mathcal{F} \to \mathcal{F}_{\chi}$ denote the canonical projection functor given by the direct sum decomposition \eqref{E:decompofF}. Letting $E$ denote a simple supermodule in $\mathcal{F},$ one can define \emph{translation functors} 
\begin{equation}\label{E:transfunctorsdef}
T_{E}^{\chi,\chi'}: \mathcal{F}_{\chi} \to \mathcal{F}_{\chi'}
\end{equation}
by  
\[
M \mapsto \operatorname{pr}_{\chi'}(M\otimes E),
\] for $M$ an object of $\mathcal{F}_{\chi}.$  When $\chi$ and $\chi'$ are ``adjacent'' in a sense made precise in \cite[Lemma 5.5]{serganova3}, one can make a suitable choice of simple supermodule $E$ so that the translation functors $T_{E}^{\chi,\chi'}$ and $T_{E^{*}}^{\chi',\chi}$ provide the equivalence between neighboring blocks.  The composition of such equivalences yields Theorem~\ref{T:atypicalityequivalence}.

In the next proposition we will see that the support varieties of \cite{BKN} behave well with respect to these translation functors.  Before doing so, recall the following facts about support varieties.  Let $\fa$ denote either $\fg$ or $\fe$ and let $M, N$ be objects in $\mathcal{F}(\fa,\fa_{\0}).$  Then by the argument from finite groups \cite[Proposition 5.7.5]{benson} one has 
\begin{equation}\label{E:directsum}
\mathcal{V}_{(\fa, \fa_{\0})}(M \oplus N) = \mathcal{V}_{(\fa, \fa_{\0})}(M) \cup \mathcal{V}_{(\fa, \fa_{\0})}(N).
\end{equation}  Using the fact that as functors one has  
\[
- \otimes (M \otimes N) \cong  \left(- \otimes N \right) \circ \left(- \otimes M \right) \cong  \left(- \otimes M \right) \circ \left(- \otimes N \right),
\] it follows that
\begin{equation}\label{E:tensorproduct}
\mathcal{V}_{(\fa, \fa_{\0})}(M \otimes N) \subseteq \mathcal{V}_{(\fa,\fa_{\0})}(M) \cap 
\mathcal{V}_{(\fa,\fa_{\0})}(N).
\end{equation}

\begin{prop}\label{P:translationfunctors}  Let $\chi,$ $\chi',$ and $E$ be such that  $T_{E}^{\chi,\chi'}$ and $T_{E^{*}}^{\chi',\chi}$ provide an equivalence of categories between $\mathcal{F}_{\chi}$ and $\mathcal{F}_{\chi'}.$  If $M$ (resp.\ $N$) is an object of $\mathcal{F}_{\chi}$ (resp.\ $\mathcal{F}_{\chi'}$), then 
\begin{align*}
\mathcal{V}_{(\fa, \fa_{\0})}\left( T_{E}^{\chi,\chi'}M\right) = \mathcal{V}_{(\fa, \fa_{\0})}(M) & & \text{and} & & \mathcal{V}_{(\fa, \fa_{\0})}\left( T_{E^{*}}^{\chi',\chi}N\right) = \mathcal{V}_{(\fa, \fa_{\0})}(N).
\end{align*}
Here $\fa$ denotes either $\fg$ or $\fe .$  
\end{prop}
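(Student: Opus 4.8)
The plan is to deduce the equality of support varieties from the two containments coming from \eqref{E:tensorproduct}, using the fact that $T_E^{\chi,\chi'}$ and $T_{E^*}^{\chi',\chi}$ are mutually inverse equivalences. First I would observe that for any object $M$ of $\mathcal{F}_\chi$, the supermodule $T_E^{\chi,\chi'}M = \operatorname{pr}_{\chi'}(M\otimes E)$ is a direct summand of $M\otimes E$ (via the block projection), so by \eqref{E:directsum} and \eqref{E:tensorproduct} one gets
\[
\mathcal{V}_{(\fa,\fa_{\0})}\left(T_E^{\chi,\chi'}M\right) \subseteq \mathcal{V}_{(\fa,\fa_{\0})}(M\otimes E) \subseteq \mathcal{V}_{(\fa,\fa_{\0})}(M).
\]
The same reasoning applied to $T_{E^*}^{\chi',\chi}$ gives $\mathcal{V}_{(\fa,\fa_{\0})}(T_{E^*}^{\chi',\chi}N) \subseteq \mathcal{V}_{(\fa,\fa_{\0})}(N)$ for $N$ in $\mathcal{F}_{\chi'}$.

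The second step is to exploit that these functors are inverse equivalences. Since $T_E^{\chi,\chi'}$ and $T_{E^*}^{\chi',\chi}$ provide an equivalence of categories, one has a natural isomorphism $T_{E^*}^{\chi',\chi}\bigl(T_E^{\chi,\chi'}M\bigr) \cong M$ in $\mathcal{F}_\chi$ — at least up to a parity shift, which I would need to check does not affect the support variety (it does not, since $\mathcal{V}_{(\fa,\fa_{\0})}(\Pi M)=\mathcal{V}_{(\fa,\fa_{\0})}(M)$: the parity change functor leaves the underlying $\fa$-module structure and hence the cohomology annihilator unchanged, by \eqref{E:extdecomp}). Applying the first containment to $N' := T_E^{\chi,\chi'}M$ then yields
\[
\mathcal{V}_{(\fa,\fa_{\0})}(M) = \mathcal{V}_{(\fa,\fa_{\0})}\left(T_{E^*}^{\chi',\chi}T_E^{\chi,\chi'}M\right) \subseteq \mathcal{V}_{(\fa,\fa_{\0})}\left(T_E^{\chi,\chi'}M\right),
\]
and combining with Step 1 gives the first asserted equality. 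The argument for $T_{E^*}^{\chi',\chi}N$ is symmetric.

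One small point to handle carefully: \eqref{E:directsum} and \eqref{E:tensorproduct} are stated for objects of $\mathcal{F}(\fa,\fa_{\0})$, so I should note that restriction along $\fe\hookrightarrow\fg$ sends objects of $\mathcal{F}(\fg,\fg_{\0})$ to objects of $\mathcal{F}(\fe,\fe_{\0})$ (as recalled in Section~\ref{SS:detecting}), and that $\operatorname{pr}_{\chi'}(M\otimes E)$ is genuinely a direct summand \emph{in} $\mathcal{F}$ of $M\otimes E$, so that \eqref{E:directsum} applies with the complementary summand $\bigoplus_{\chi''\neq\chi'}\operatorname{pr}_{\chi''}(M\otimes E)$. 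The main (and really only) obstacle is the bookkeeping around the parity shift: one must confirm that the equivalence $T_{E^*}^{\chi',\chi}\circ T_E^{\chi,\chi'}\cong \mathrm{id}$ holds on the nose, or else absorb a possible $\Pi$ using parity-invariance of support varieties; everything else is a formal consequence of \eqref{E:directsum}, \eqref{E:tensorproduct}, and the definition of the translation functors.
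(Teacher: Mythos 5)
Your proposal is correct and follows essentially the same line of reasoning as the paper: establish one containment using \eqref{E:directsum} and \eqref{E:tensorproduct} (since $T_E^{\chi,\chi'}M$ is a summand of $M\otimes E$), then reverse it by applying the same containment to $T_E^{\chi,\chi'}M$ and using $T_{E^*}^{\chi',\chi}\circ T_E^{\chi,\chi'}\cong\mathrm{id}$. The extra bookkeeping you flag about the parity shift is reasonable caution but is not an issue the paper treats as substantive.
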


\begin{proof}  Taking \eqref{E:directsum} and~\eqref{E:tensorproduct} together, if $M$ is an object of $\mathcal{F}_{\chi},$ then one has 
\begin{equation}\label{E:transfunctorsonvarieties}
\mathcal{V}_{(\fa, \fa_{\0})}\left( T_{E}^{\chi,\chi'}M\right) \subseteq \mathcal{V}_{(\fa, \fa_{\0})}(M \otimes E) \subseteq \mathcal{V}_{(\fa, \fa_{\0})}(M) \cap \mathcal{V}_{(\fa, \fa_{\0})}(E) \subseteq \mathcal{V}_{(\fa, \fa_{\0})}(M).
\end{equation}  Similarly, one has 
\[
\mathcal{V}_{(\fa, \fa_{\0})}\left( T_{E^{*}}^{\chi',\chi}N\right) \subseteq \mathcal{V}_{(\fa, \fa_{\0})}(N \otimes E^{*}) 
\subseteq \mathcal{V}_{(\fa, \fa_{\0})}(N) \cap \mathcal{V}_{(\fa, \fa_{\0})}(E^{*}) \subseteq \mathcal{V}_{(\fa, \fa_{\0})}(N).
\]  
Since $ T_{E^{*}}^{\chi',\chi} \left( T_{E}^{\chi,\chi'}M\right) \cong M$ 
one can apply  \eqref{E:transfunctorsonvarieties}, to obtain  
\begin{align*}
\mathcal{V}_{(\fa, \fa_{\0})}(M) &= \mathcal{V}_{(\fa, \fa_{\0})}\left( T_{E^{*}}^{\chi',\chi} \left( T_{E}^{\chi,\chi'}M\right)\right)  \subseteq \mathcal{V}_{(\fa, \fa_{\0})}\left( T_{E}^{\chi,\chi'}M\right) \subseteq \mathcal{V}_{(\fa, \fa_{\0})}(M).
\end{align*}  The first equality follows.  The second equality follows by switching $\chi$ and $\chi',$ $E$ and $E^{*}$, and $M$ and $N$.
\end{proof}

\subsection{Restriction Functors}\label{SS:resindfunctors}  For clarity and brevity, set $\mathcal{F}(m|n)=\mathcal{F}(\gl(m|n),\gl(m|n)_{\0}).$
Denote the block containing the trivial supermodule by $\mathcal{F}(m|n)_{\chi_{\C}}$.  The second main result of Serganova \cite[Theorem 3.6]{serganova3} is the following theorem.

\begin{theorem} \label{T:equivalencetoprincipalblock}  Let $\mathcal{F}(m|n)_{\chi}$ be a block of atypicality $k.$  Then  $\mathcal{F}(m|n)_{\chi}$ and  $\mathcal{F}(k|k)_{\chi_{\C}}$ are equivalent categories.
\end{theorem}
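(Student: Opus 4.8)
The statement is Serganova's \cite[Theorem 3.6]{serganova3}, so in our setting the proof is simply a citation; nonetheless, let us outline the strategy one would follow to establish it directly.  By Theorem~\ref{T:atypicalityequivalence} all blocks of $\mathcal{F}(m|n)$ of atypicality $k$ are equivalent to one another, and $\mathcal{F}(k|k)_{\chi_{\C}}$ is one particular block of $\mathcal{F}(k|k)$, of maximal atypicality $k=\min(k,k)$.  Hence it suffices to produce, for \emph{some} choice of block on each side, an equivalence $\mathcal{F}(m|n)_{\chi}\simeq \mathcal{F}(k|k)_{\chi'}$ with $\atyp(\chi)=\atyp(\chi')=k$, and then apply Theorem~\ref{T:atypicalityequivalence} a second time inside $\gl(k|k)$ to replace $\chi'$ by $\chi_{\C}$.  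Thus the real content is a comparison between the finite dimensional representation theory of $\gl(m|n)$ and that of $\gl(m'|n')$ for strictly smaller $m',n'$.

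The plan is to prove, by induction on $(m-k)+(n-k)$, that every block of atypicality $k$ of $\gl(m|n)$ is equivalent to every block of atypicality $k$ of $\gl(k|k)$; the base case $m=n=k$ is Theorem~\ref{T:atypicalityequivalence}.  For the inductive step assume $m>k$ (the case $n>k$ is symmetric).  Encoding the block by a weight diagram, the condition $m>k$ forces the diagram of a weight $\lambda$ in the block to carry a ``purely even'' label, i.e.\ a position contributing to the count $m$ but not to the atypicality $k$.  I would then realize this extra label as a spectator: fix the block diagonal embedding $\gl(m-1|n)\hookrightarrow \gl(m|n)$ obtained by deleting one even row and column, and build mutually quasi-inverse exact functors between the given block of $\gl(m|n)$ and a suitable atypicality-$k$ block of $\gl(m-1|n)$, assembled from the translation functors of Section~\ref{SS:translationfunctors}, induction and restriction along this embedding, and tensoring with a one dimensional representation to correct the central character.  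One checks that these functors are exact, carry simples to simples and projective covers to projective covers (respecting the highest weight structure of Theorem~\ref{T:HWC}), and compose to the identity up to natural isomorphism.  Iterating strips away all $m-k$ purely even and all $n-k$ purely odd labels and descends to $\gl(k|k)$.

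The main obstacle is precisely this inductive step: producing the comparison functor between $\gl(m|n)$ and $\gl(m-1|n)$ and proving it is an equivalence on the relevant blocks.  Unlike the translation functors of Section~\ref{SS:translationfunctors}, which stay inside a fixed $\gl(m|n)$ and are controlled purely by central characters and the block decomposition \eqref{E:decompofF}, here one must track how the simple objects, their projective covers, and the $K$-filtrations of Proposition~\ref{L:filtrations} transform under restriction and induction along $\gl(m-1|n)\hookrightarrow \gl(m|n)$, and one must pin down at each stage which target block to land in.  This bookkeeping is where the full weight-diagram combinatorics of $\gl(m|n)$ enters, and it is the technical core of Serganova's argument; by contrast, the reduction to a single pair of blocks and the closing appeal to Theorem~\ref{T:atypicalityequivalence} inside $\gl(k|k)$ are formal.
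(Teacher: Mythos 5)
You are correct that the theorem is taken from Serganova's preprint without independent proof, so the ``paper's proof'' is a citation; but the paper does, in the subsequent subsection (Section~\ref{SS:resindfunctors}), sketch the mechanism of the equivalence, and your outlined strategy is genuinely different from it.  The paper's route, following Serganova, embeds $\gl(k|k)$ directly into $\gl(m|n)$ as the span of $\{E_{i,j} : m-k+1 \le i,j \le m+k\}$, and the key technical tool is a single \emph{refined restriction functor} $\Res_{\mu}$ that picks out the $\mu$-weight space of the complementary Cartan $\fh'$ and regards it as a $\gl(k|k)$-supermodule; Serganova's Lemma~6.3 (Proposition~\ref{P:serganovaskeylemma} here) shows this is already an equivalence $\mathcal{F}(m|n)^{\mu}_{\chi}\to\mathcal{F}(k|k)^{\mu}_{\chi_{\C}}$, and translation functors are used only to push any given object into one of those subcategories, whence the limit in~\eqref{E:Phidef2}.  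Your proposal instead replaces this one-shot restriction by an induction on $(m-k)+(n-k)$, stripping one even (or odd) row and column at a time via the embedding $\gl(m-1|n)\hookrightarrow\gl(m|n)$ and constructing quasi-inverse induction/restriction functors at each stage.  The one-shot approach buys you the fact that you never need an explicit quasi-inverse --- a single exact functor is shown to be an equivalence on a full subcategory --- whereas your approach needs quasi-inverse pairs and tracking of simples, projective covers, and $K$-filtrations through each intermediate rank, which is a substantially heavier inductive burden.  Conversely, your route is closer in spirit to diagrammatic/branching arguments and doesn't require identifying in advance the sharp statement that weight-space truncation is an equivalence.  Your step (ii), appealing to Theorem~\ref{T:atypicalityequivalence} inside $\gl(k|k)$ to land in the principal block, is correct and matches the paper.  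As a word of caution: you acknowledge you are not carrying out the inductive step, and in fact the functor you would need is not among the translation functors of Section~\ref{SS:translationfunctors} (which stay inside one $\gl(m|n)$); it would have to be something like $\Res_{\mu}$ for a one-dimensional $\fh'$, which is essentially a single iteration of Serganova's construction --- so the two approaches are closer than they first appear, but the iterated version adds bookkeeping that the paper deliberately sidesteps.
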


The equivalence of these two categories is provided by a functor 
\[
\Phi:  \mathcal{F}(m|n)_{\chi} \to \mathcal{F}(k|k)_{\chi_{\C}} 
\]
which, very roughly speaking, is of the form 
\begin{equation}\label{E:Phidef}
\Res_{\mu}  \circ T_{t} \circ \dotsb \circ T_{1} 
\end{equation}
where the functors $T_{i}$ are translation functors which give equivalances between blocks of $\mathcal F(m|n)$ as in Theorem~\ref{T:atypicalityequivalence}, and where $\Res_{\mu}$ is a functor which refines restriction.  By Proposition~\ref{P:translationfunctors} the $\fg$ and $\fe$ support varieties are invariant under the functors $T_{i}.$  Thus the main task is to understand how they behave with respect to the functor $\Res_{\mu}.$

First, let us define $\Res_{\mu}.$  Fix a block $\mathcal{F}(m|n)_{\chi}$ of atypicality $k$ and note that by \eqref{E:atypvsdef} one has $k \leq m,n.$  Consequently we can naturally identify $\gl(k|k)$ as the subalgebra of $\gl(m|n)$ spanned by $\{ E_{i,j} \;\vert\; m-k+1 \leq i,j \leq m+k \}.$  Under this identification the Cartan subalgebra $\mathfrak{h}_{m|n} \subseteq \gl(m|n)$ naturally decomposes,
\[
\mathfrak{h}_{m|n} = \mathfrak{h}_{k|k} \oplus \mathfrak{h}',
\] where $\mathfrak{h}'$ is the linear span of $E_{t,t}$ ($t=1, \dotsc ,m-k, m+k+1, \dotsc , m+n$).  Dualizing this decomposition one obtains,
\[
\mathfrak{h}_{m|n}^{*} = \mathfrak{h}_{k|k}^{*} \oplus (\mathfrak{h}')^{*}.
\]

As in Section~\ref{SS:eforgl}, we identify $\fe_{\1} \subseteq \fg_{\1}$ as the subspace spanned by the distinguished basis 
\begin{align*}
x_{t} := E_{m+1-t,m+t} + E_{m+t,m+1-t} & &  \text{for $t=1, \dotsc, r.$}
\end{align*}  Note that this is compatible with the subalgebra $\gl (k|k)$ in the sense that if $\widetilde{e}$ is the detecting subalgebra for $\gl (k|k)$, then $\widetilde{\fe}_{\1}\subseteq \fe_{\1}$ and $\widetilde{\fe}_{\1}$ is spanned by the elements $x_{1}, \dotsc , x_{k}.$  Furthermore, one then has $\widetilde{W} \subseteq W$ as the subgroup of elements which fix $X_{k+1}, \dotsc , X_{r}.$  Here $X_{j}$ ($j=1, \dotsc, r$) is as in Section~\ref{SS:eforgl}.

Fix a block $\mathcal{F}(m|n)_{\chi}$ of atypicality $k$ and $\mu \in (\mathfrak{h}')^{*}.$  Let $\mathcal{F}(m|n)_{\chi}^{\mu}$ denote the full subcategory of $\mathcal{F}(m|n)_{\chi}$ consisting of all supermodules whose simple composition factors are isomorphic to $L(\lambda + \mu)$ for some $\lambda \in  \mathfrak{h}_{k|k}^{*}.$  Let  $\mathcal{F}(k|k)_{\chi_{\C}}^{\mu}$ denote the full subcategory of  $\mathcal{F}(k|k)_{\chi_{\C}}$ of all supermodules whose simple composition factors are isomorphic to $L(\lambda)$ for some $\lambda \in  \mathfrak{h}_{k|k}^{*}$ such that $L(\lambda + \mu)$ is an object of $\mathcal{F}(m|n)$ (i.e., $\lambda+\mu \in X^{+}_{\0}$).

One can then define the refined restriction functor
\[
\Res_{\mu}:  \mathcal{F}(m|n)_{\chi}^{\mu} \to \mathcal{F}(k|k)_{\chi_{\C}}^{\mu}
\] by 
\[
\Res_{\mu}M = \setof{x \in M}{hx=\mu(h)x \text{ for all } h \in \mathfrak{h}'},
\] viewed as a $\gl (k|k)$-supermodule by restriction.  Note that $\fh'$ and $\gl (k|k)$ are commuting subalgebras of $\gl (m|n)$ so $\Res_{\mu}M$ is in fact a $\gl (k|k)$-supermodule.  A key lemma is the following result from \cite[Lemma 6.3]{serganova3}.

\begin{prop}\label{P:serganovaskeylemma}  Let $\mathcal{F}(m|n)_{\chi}$ be a block of atypicality $k$ and $\mu \in (\fh')^{*}.$  Let $\mathcal{F}(m|n)_{\chi}^{\mu}$ and $\mathcal{F}(k|k)_{\chi_{\C}}^{\mu}$ be as above. Then the functor 
\[
\Res_{\mu}:  \mathcal{F}(m|n)_{\chi}^{\mu} \to \mathcal{F}(k|k)_{\chi_{\C}}^{\mu}
\] defines an equivalence of categories.
\end{prop}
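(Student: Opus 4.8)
This is Serganova's \cite[Lemma~6.3]{serganova3}; here is a proof strategy. The plan is to produce a quasi-inverse of $\Res_\mu$ by a parabolic induction and to check that the unit and counit are natural isomorphisms. Let $\fp\subseteq\gl(m|n)$ be the parabolic whose Levi is $\gl(k|k)\oplus\fh'$ and whose nilradical is spanned by those $E_{i,j}\notin\gl(k|k)\oplus\fh'$ with $i<j$. An object $V$ of $\mathcal{F}(k|k)_{\chi_{\C}}^{\mu}$ becomes a $\fp$-module with $\fh'$ acting by $\mu$ and the nilradical by $0$; set $\Ind_\mu V$ to be the block-$\chi$ summand of $U(\gl(m|n))\otimes_{U(\fp)}V$, which will be finite dimensional. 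By Frobenius reciprocity $\Ind_\mu$ is adjoint to $\Res_\mu$ on the relevant subcategories.

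The easy structural inputs are the following. First, $\Res_\mu$ is exact, since $\fh'$ is abelian and acts semisimply on every object of $\mathcal{F}(m|n)$, so $\Res_\mu M$ is the $\fh'$-weight-$\mu$ space of $M$, a $\gl(k|k)$-stable direct summand of $M$. Second, the highest weight vector of $L(\lambda+\mu)$ has $\fh'$-weight $\mu$ because $\lambda|_{\fh'}=0$ and $\mu|_{\fh_{k|k}}=0$, so a highest weight argument gives $\Res_\mu L(\lambda+\mu)\cong L(\lambda)$; in particular $\Res_\mu$ induces a bijection on simple objects, preserves composition-factor multiplicities, and carries Kac modules to Kac modules. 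Third, every weight occurring in the nilradical or opposite nilradical of $\fp$ restricts nontrivially to $\fh'$ and lies in a strict cone there, so a nonempty product of such root vectors never returns an $\fh'$-weight to $\mu$; hence $\Res_\mu\bigl(U(\gl(m|n))\otimes_{U(\fp)}V\bigr)=V$. Granting a compatibility of the block truncation with $\Res_\mu$ (part of the rigidity discussed below), the unit $V\to\Res_\mu\Ind_\mu V$ is an isomorphism. As a by-product $\Res_\mu$ is faithful: a nonzero $\gl(m|n)$-map has nonzero image, whose composition factors have nonzero $\fh'$-weight-$\mu$ space.

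What remains --- and this is the heart of the matter --- is to show the counit $\Ind_\mu\Res_\mu M\to M$ is an isomorphism for each $M$ in $\mathcal{F}(m|n)_{\chi}^{\mu}$; equivalently, that $M$ is faithfully reconstructed from its $\fh'$-weight-$\mu$ space. One shows (i) $M$ is generated over $\gl(m|n)$ by $\Res_\mu M$, so by exactness and the unit isomorphism the counit is an epimorphism, and (ii) a dimension count then makes it a monomorphism. Point (ii) may be packaged as the recognition theorem for module categories over finite-dimensional algebras: $\Res_\mu$ is exact, faithful, bijective on simple objects, and carries the projective generator $\bigoplus_\lambda P(\lambda+\mu)$ of $\mathcal{F}(m|n)_{\chi}^{\mu}$ to the projective generator $\bigoplus_\lambda P(\lambda)$ of $\mathcal{F}(k|k)_{\chi_{\C}}^{\mu}$, inducing an isomorphism of endomorphism algebras because $\dim\Hom(P(\lambda+\mu),P(\nu+\mu))=[P(\nu+\mu):L(\lambda+\mu)]=[P(\nu):L(\lambda)]=\dim\Hom(P(\lambda),P(\nu))$ (using that $\Res_\mu$ matches projective covers and preserves composition multiplicities).

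The main obstacle is point (i), together with the parallel assertions that the block truncation does not shrink $\Res_\mu$ and that $\operatorname{pr}_\chi\bigl(U(\gl(m|n))\otimes_{U(\fp)}V\bigr)$ really lands back in $\mathcal{F}(m|n)_{\chi}^{\mu}$ --- i.e.\ that no ``unexpected'' $\fh'$-weights or composition factors $L(\tau+\mu)$ with $\tau\notin\fh_{k|k}^{*}$ can appear in the block. Both are consequences of the hypothesis that $\mathcal{F}(m|n)_{\chi}$ has atypicality exactly $k$: the central character $\chi$ then pins down the ``typical part'' of every weight in the block, in particular its $\fh'$-component up to the Weyl group of $\gl(k|k)\oplus\fh'$, so nothing outside the $\gl(k|k)$-picture shifted by $\mu$ survives. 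Establishing this rigidity from the central-character and linkage combinatorics of $\gl(m|n)$ is the technical core of \cite[Lemma~6.3]{serganova3}.
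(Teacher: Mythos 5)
The paper does not actually prove this proposition: it is stated verbatim as Serganova's \cite[Lemma 6.3]{serganova3} and is used as a black box, so there is no in-paper argument to compare your proposal against.

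Taken on its own merits, your outline via parabolic induction through $\fp$ with Levi $\gl(k|k)\oplus\fh'$ is a reasonable reconstruction and parallels the standard approach. A few remarks. First, the claimed adjunction $\Ind_\mu\dashv\Res_\mu$ is not automatic: Frobenius reciprocity gives $\operatorname{Ind}_{\fp}^{\fg}\dashv\bigl(M\mapsto (M_\mu)^{\fn^+}\bigr)$, not $M\mapsto M_\mu$; it happens to specialize to what you want precisely because every $M$ in $\mathcal{F}(m|n)_{\chi}^{\mu}$ has all $\fh'$-weights in $\mu-(\text{positive cone})$, so that $\fn^+ M_\mu=0$ and $(M_\mu)^{\fn^+}=M_\mu$. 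You gesture at this with ``on the relevant subcategories,'' but it is worth making the dependence on the cone estimate explicit, since it is exactly the same estimate used for the unit computation. Second, the assertion that the block-$\chi$ summand of $U(\gl(m|n))\otimes_{U(\fp)}V$ is finite dimensional, and that it lands in $\mathcal{F}(m|n)_{\chi}^{\mu}$ (no ``stray'' composition factors $L(\tau+\mu')$ with $\mu'\ne\mu$ or $\tau\notin\fh_{k|k}^*$), is not a formal consequence of the setup; as you yourself observe, this is precisely the linkage/central-character rigidity that constitutes the technical heart of Serganova's lemma. Third, in the recognition-theorem step, to conclude $\Res_\mu P(\lambda+\mu)\cong P(\lambda)$ one also needs $\Res_\mu$ to preserve projectivity (e.g.\ via an exact adjoint on the right, which again uses the block/weight rigidity) and not merely to match tops and composition multiplicities.

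So the structural skeleton of your argument is sound, but all the load-bearing steps --- the unit isomorphism surviving $\operatorname{pr}_\chi$, the counit being an isomorphism, projectives matching, and the absence of unexpected $\fh'$-weights in the block --- bottom out in the same combinatorial rigidity that you defer to \cite[Lemma 6.3]{serganova3}. As the paper itself simply cites that lemma, this is not inconsistent with the authors' treatment, but what you have written is an organizing outline rather than a self-contained proof.
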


The functor $\Phi$ alluded to in \eqref{E:Phidef} is then defined as follows.  First one fixes a certain specific block $\mathcal{F}(m|n)_{\chi_{0}}$ of atypicality $k$ (recalling that they are all equivalent by Theorem~\ref{T:atypicalityequivalence}).  One chooses a suitable sequence of elements $\mu_{1}, \mu_{2}, \dotsc  \in (\mathfrak{h}')^{*};$ a sequence of central characters $\chi_{1}, \chi_{2}, \dotsc$ ; and a sequence of translation functors  $T_{1}, T_{2}, \dotsc $ so that $T_{i}: \mathcal{F}(m|n)_{\chi_{i-1}} \to  \mathcal{F}(m|n)_{\chi_{i}}$ is an equivalence between blocks of atypicality $k$. The functor $\Phi$ is then defined by 
\begin{equation}\label{E:Phidef2}
\Phi = \lim_{i \to \infty} \Res_{\mu_{i}} \circ T_{i} \circ \dotsb \circ T_{1}.
\end{equation}
Note that one has to verify that the limit makes sense by verifying that for $M \in \mathcal{F}_{\chi_{0}}$ one has $\left( T_{i} \circ \dotsb \circ T_{1} \right)M  \in \mathcal{F}(m|n)_{\chi_{i}}^{\mu_{i}}$ for $i \gg 0,$ and by verifying the appropriate compatibility condition.  This is done in \cite[Lemma 6.4]{serganova3} along with a more precise description of the functor $\Phi$ (e.g.\ the choice of $\mathcal{F}(m|n)_{\chi_{0}}$ and the sequences $\mu_{1}, \mu_{2}, \dotsc;$ $\chi_{1}, \chi_{2}, \dotsc ;$ and $T_{1},T_{2}, \dotsc$).

We now turn to understanding the precise relationship between the functor $\Res_{\mu}$ and support varieties.  We continue to use the notation for relative cohomology introduced in Section~\ref{SS:morecohom}.  The inclusion $\gl (k|k) \hookrightarrow \gl (m|n)$ induces a map 
\[
\res : \HH^{\bullet}(\gl(m|n), \gl (m|n)_{\0 }; M) \to  \HH^{\bullet}(\gl(k|k), \gl (k|k)_{\0 }; M)
\] for any $M$ in $\mathcal{F}(m|n).$  Note that this coincides with the map induced by the restriction functor, $\Res :\mathcal{F}(m|n) \to \mathcal{F}(k|k).$  We then have the following commutative diagram. 

\begin{equation}\label{E:commute}
\begin{CD}
 I_{m|n}(M) \hookrightarrow \HH^{\bullet}(\gl(m|n), \gl (m|n)_{\0 }; \C)  @>^{m_{1}}>>        \HH^{\bullet}(\gl(m|n), \gl (m|n)_{\0 }; M\otimes M^{*}) \\
@V\res_{\C} VV                                                           @VV\res V\\
  I_{k|k}(M) \hookrightarrow \HH^{\bullet}(\gl(k|k), \gl (k|k)_{\0 }; \C)  @>^{m_{2}}>>        \HH^{\bullet}(\gl(k|k), \gl (k|k)_{\0 }; M\otimes M^{*}) 
\end{CD}
\end{equation}
Here the horizontal maps are those induced by the functor $-\otimes M$, and $I_{m|n}(M)$ (resp.\ $I_{k|k}(M)$) is the kernel of this map; that is, as with finite groups \cite{benson}, it is an ideal whose zero set is $\mathcal{V}_{(\gl (m|n), \gl (m|n)_{\0})}(M)$ (resp.\  $\mathcal{V}_{(\gl(k|k), \gl (k|k)_{\0})}(M)$).  

Also, for the purposes of uniformity in our notation we shall capitalize the names of functors and when they induce maps in cohomology call the induced maps by the same name but in lower case.  For example, if $F: \mathcal{C}_{1} \to \mathcal{C}_{2}$ is an exact functor between categories of supermodules for Lie superalgebras, then we write 
\[
f: \Ext_{\mathcal{C}_{1}}^{\bullet}\left(M, N \right) \to \Ext_{\mathcal{C}_{2}}^{\bullet}\left(FM, FN \right)
\]  for the induced map of $\Z$-graded superspaces.  Thus, in \eqref{E:commute} $\res_{\C}$ denotes the map induced by the exact functor $\Res$ (with coefficients in the trivial supermodule).

Let $\chi_{0}$ be the fixed central character of atypicality $k,$ $\mu_{1}, \mu_{2}, \dotsc$ the elements of $(\fh')^{*},$ $\chi_{1}, \chi_{2} \dotsc$ the central characters, and $T_{1}, T_{2}, \dotsc$ the sequence of translation functors, all chosen as discussed above.  The following proposition records certain properties of \eqref{E:commute}.

\begin{prop}\label{P:diagrams}    Let $J$ denote the kernel of the map $\res_{\C}$ and fix $ d \geq 0$ so that $J$ is generated by elements of degree no more than $d.$  Then the following statements about \eqref{E:commute} hold true.
\begin{enumerate}
\item [(a)] The map $\res_{\C}$ is a surjective algebra homomorphism.
\item  [(b)] Let $M=L(\lambda)$ be a simple supermodule in a block $\mathcal{F}(m|n)_{\chi}$ of atypicality $k.$  Then the map $m_{2}$ is injective. 
\item  [(c)] Assume $M$ is an object of $\mathcal{F}(m|n)_{\chi}^{\mu}$ for some block $\mathcal{F}(m|n)_{\chi}$ of atypicality $k$ and some $\mu \in (\fh')^{*}.$ Further assume that 
\[
 \Ext^{i}_{\mathcal{F}(m|n)_{\chi}}(M,M) = \Ext^{i}_{\mathcal{F}(m|n)_{\chi}^{\mu}}(M,M)
\]  for $i=0, \dotsc, d.$ Then $J \subseteq I_{m|n}(M).$
\end{enumerate}
\end{prop}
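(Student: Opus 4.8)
The plan is to establish the three parts essentially independently, relying on the commutative diagram~\eqref{E:commute} and the standard machinery relating support varieties to the map induced by $-\otimes M$.

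For part~(a), the key point is that $\res_{\C}$ is induced by the inclusion $\gl(k|k)\hookrightarrow\gl(m|n)$ of detecting-type subalgebras, and by \eqref{E:cohomringiso} (applied to both $\gl(m|n)$ and $\gl(k|k)$) both cohomology rings are polynomial rings on the elementary symmetric polynomials in the squares of the coordinate functions $X_1,\dots,X_r$ (resp.\ $X_1,\dots,X_k$) on $\fe_{\1}$ (resp.\ $\widetilde{\fe}_{\1}$). Since $\widetilde{\fe}_{\1}\subseteq\fe_{\1}$ is the coordinate subspace spanned by $x_1,\dots,x_k$ (as noted in Section~\ref{SS:resindfunctors}), the restriction of functions $S(\fe_{\1}^{*})\to S(\widetilde{\fe}_{\1}^{*})$ is the obvious surjective algebra map sending $X_j\mapsto 0$ for $j>k$, and one checks it is $\widetilde{W}$-equivariant and descends to a surjection on $W$- resp.\ $\widetilde{W}$-invariants. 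Hence $\res_{\C}$ is a surjective algebra homomorphism, and $J=\ker(\res_{\C})$ is the ideal generated by the images of $\dot X_{k+1},\dots,\dot X_r$ (together with relations expressing the remaining generators), which is finitely generated; this fixes the degree bound $d$.

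For part~(b), one uses \eqref{E:relcohomiso2} to identify $m_2$ with the map $\Ext^{\bullet}_{\mathcal{F}(k|k)}(\C,\C)\to\Ext^{\bullet}_{\mathcal{F}(k|k)}(M,M)$ induced by $-\otimes M$, where $M=L(\lambda)$ viewed in $\mathcal{F}(k|k)$ via $\Res$; note $\lambda$ lands in a block of $\gl(k|k)$ of full atypicality $k$. The injectivity of $m_2$ is equivalent to $\mathcal{V}_{(\gl(k|k),\gl(k|k)_{\0})}(M)=\mathcal{V}_{(\gl(k|k),\gl(k|k)_{\0})}(\C)$, i.e.\ that the annihilator ideal $I_{k|k}(M)$ is the zero ideal; this follows from Proposition~\ref{L:fullatypicality} applied to $\gl(k|k)$ (the full-atypicality case), combined with the fact that $R$ for $\gl(k|k)$ is a polynomial ring and hence reduced, so equality of varieties forces equality of (radical) ideals — which is the whole point of the remark following \eqref{E:cohomringiso}.

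For part~(c), the goal is $J\subseteq I_{m|n}(M)=\ker(m_1)$. Take a generator $z\in J$ of degree $\le d$. Then $\res_{\C}(z)=0$, so by commutativity of~\eqref{E:commute} we get $\res(m_1(z))=m_2(\res_{\C}(z))\cdot(\text{something})$ — more precisely $\res\circ m_1 = m_2\circ\res_{\C}$, so $\res(m_1(z))=0$ in $\HH^{\bullet}(\gl(k|k),\gl(k|k)_{\0};M\otimes M^{*})\cong\Ext^{\bullet}_{\mathcal{F}(k|k)}(M,M)$. By Proposition~\ref{P:serganovaskeylemma}, $\Res_{\mu}$ is an equivalence of categories $\mathcal{F}(m|n)_{\chi}^{\mu}\to\mathcal{F}(k|k)_{\chi_{\C}}^{\mu}$, so it induces an isomorphism $\Ext^{i}_{\mathcal{F}(m|n)_{\chi}^{\mu}}(M,M)\cong\Ext^{i}_{\mathcal{F}(k|k)_{\chi_{\C}}^{\mu}}(M,M)$ for all $i$; in degrees $\le d$ the hypothesis identifies the left side with $\Ext^{i}_{\mathcal{F}(m|n)_{\chi}}(M,M)=\Ext^{i}_{\mathcal{F}(m|n)}(M,M)$, and on the right side the $\mu$-subcategory $\Ext$ is computed by the same complexes as full $\Ext$ after applying $\Res$, so the composite agrees with $\res$ in degrees $\le d$. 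Therefore $\res$ restricted to degrees $\le d$ is injective, forcing $m_1(z)=0$, i.e.\ $z\in I_{m|n}(M)$; since $J$ is generated in degrees $\le d$ and $I_{m|n}(M)$ is an ideal, $J\subseteq I_{m|n}(M)$.

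The main obstacle is part~(c), specifically the careful bookkeeping needed to justify that the map $\res$ in the relevant degrees factors through the equivalence $\Res_{\mu}$ and hence is injective there. One must be precise that $\Res$ and $\Res_{\mu}$ induce the \emph{same} map in cohomology when restricted to the $\mu$-subcategory (because $\Res_{\mu}M$ is a direct summand, as a $\gl(k|k)$-supermodule, of $\Res M$, cut out by the commuting torus $\fh'$), and that the hypothesized coincidence of $\Ext$-groups in degrees $\le d$ together with the equivalence of Proposition~\ref{P:serganovaskeylemma} promotes injectivity of the abstract induced map to injectivity of $\res$ itself on the range of degrees controlling the generators of $J$. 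Everything else is formal diagram-chasing in~\eqref{E:commute}.
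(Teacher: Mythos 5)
Your proposal follows essentially the same route as the paper in all three parts (detecting-subalgebra description for (a), full-atypicality plus reducedness for (b), injectivity of $\res$ in low degrees via $\Res_\mu$ plus a diagram chase for (c)). The one place you need to be more careful is part~(b): Proposition~\ref{L:fullatypicality} applies to a \emph{simple} $\gl(k|k)$-supermodule of full atypicality, whereas $\Res L(\lambda)$ is not simple; your phrase ``$\lambda$ lands in a block of $\gl(k|k)$ of full atypicality'' is not meaningful, since $\lambda$ is a $\gl(m|n)$-weight. The paper fixes this by decomposing $\Res L(\lambda) = \Res_\mu L(\lambda) \oplus G_\mu L(\lambda)$, applying Proposition~\ref{P:serganovaskeylemma} to identify $\Res_\mu L(\lambda)$ as a simple $\gl(k|k)$-supermodule of atypicality $k$, applying Proposition~\ref{L:fullatypicality} to that summand, and then using additivity of the bifunctor $\HH^{\bullet}(\gl(k|k), \gl(k|k)_{\0}; -\otimes -)$ together with reducedness of the cohomology ring to conclude $I_{k|k}(L(\lambda)) = (0)$. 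For part~(c), you correctly identified the crux: one must show that $\res$ (not just $\res_\mu$) is injective in degrees $\le d$. The paper makes this precise by introducing the intermediate category $\mathcal{C}$ of $\gl(k|k)\oplus\fh'$-supermodules semisimple over $\fh'$, the projection functor $P_\mu$, and the factorization $\Res_\mu = P_\mu \circ \Res$, from which $\res_\mu = p_\mu\circ\res$; injectivity of $\res_\mu$ (from the equivalence) then forces injectivity of $\res$ on the hypothesized range of degrees. Your sketch anticipates this but leaves the factorization informal.
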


\begin{proof} 

\noindent  One proves (a) as follows.  Let us write $\fe \subseteq \gl (m|n)$ and $\tilde{\fe} \subseteq \gl (k|k)$ for the detecting subalgebras given in Section~\ref{SS:eforgl}.   By \cite[Theorem 3.3.1(a)]{BKN} (see also \eqref{E:cohomringiso}), restriction induces isomorphisms  $ \HH^{\bullet}(\gl(m|n), \gl (m|n)_{\0 }; \C) \to S(\fe_{\1}^{*})^{\mathcal{W}}$ and $ \HH^{\bullet}(\gl(k|k), \gl (k|k)_{\0 }; \C)  \to  S(\tilde{\fe}_{\1}^{*})^{\widetilde{\mathcal{W}}}.$

{}From the identification $\widetilde{\fe}_{\1}\subseteq \fe_{\1},$ one has the canonical homomorphism $S(\fe^{*}_{\1}) \to S(\tilde{\fe}^{*}_{\1})$ given by restriction of functions.  From the explicit description given in Section~\ref{SS:eforgl} one sees that this map corresponds to setting $X_{k+1},\dotsc ,X_{r}$ to zero.   Restriction of this map in turn induces a map 
\[
\rho: S(\fe^{*}_{\1})^{\mathcal{W}} \to S(\tilde{\fe}^{*}_{\1})^{\widetilde{\mathcal{W}}}.
\]  The explicit description given in \eqref{E:cohomringiso} allows one to verify that this map is surjective.

As all maps are induced by restrictions, one has the following commutative diagram.
\begin{equation*}
\begin{CD}
 \HH^{\bullet}(\gl(m|n), \gl (m|n)_{\0 }; \C)  @>_{\simeq}>>       S(\fe_{\1}^{*})^{\mathcal{W}} \\
@V\res_{\C} VV                                                           @VV \rho  V\\
  \HH^{\bullet}(\gl(k|k), \gl (k|k)_{\0 }; \C)  @>_{\simeq}>>         S(\tilde{\fe}_{\1}^{*})^{\widetilde{\mathcal{W}}} 
\end{CD}
\end{equation*}  Therefore, the map $\res_{\C}$ is surjective.

To prove (b) one argues as follows.  Fix $\mu \in (\fh')^{*}$ so that $L(\lambda)$ lies in $\mathcal{F}(m|n)^{\mu}_{\chi}.$  For objects in $\mathcal{F}(m|n)_{\chi}^{\mu}$ one then has the decomposition 
\[
\Res(M) = \Res_{\mu}(M) \oplus G_{\mu}(M)
\] where the functor $G_{\mu}: \mathcal{F}(m|n)_{\chi}^{\mu} \to \mathcal{F}(k|k)$ is defined by 
\begin{equation}\label{E:Gmudef}
G_{\mu}(N)=\sum_{\substack{\nu \in (\mathfrak{h}')^{*}\\ \nu \neq \mu}}\setof{x \in N}{hx = \nu(h)x \text{ for all } h \in \mathfrak{h}'}. 
\end{equation}    By the additivity of the bifunctor $\HH^{\bullet}(\gl(k|k), \gl (k|k)_{\0 }; -\otimes -)$ it follows that
\begin{align}\label{E:bifunctordecomp}
 \HH^{\bullet}(\gl(k|k), \gl (k|k)_{\0 }; &L(\lambda)\otimes L(\lambda)^{*}) \\
   & \cong  \HH^{\bullet}(\gl(k|k), \gl (k|k)_{\0 }; \Res_{\mu}(L(\lambda))\otimes \Res_{\mu}(L(\lambda))^{*}) \oplus (**), \notag 
\end{align} where $(**)$ denotes the appropriate complementary superspace.
By Proposition~\ref{P:serganovaskeylemma}, $\Res_{\mu}(L(\lambda))$ is a simple supermodule of atypicality $k$ in $\mathcal{F}(k|k).$ Hence by Proposition~\ref{L:fullatypicality} one has that $\mathcal{V}_{\gl (k|k)}\left( \Res_{\mu}(L(\lambda))\right)=\mathcal{V}_{\gl (k|k)}\left(\C\right).$  That is, since $\HH^{\bullet}(\gl(k|k), \gl (k|k)_{\0 }; \C)$ has no nonzero nilpotent elements,
\[
\operatorname{Ann}_{\HH^{\bullet}(\gl(k|k), \gl (k|k)_{\0 }; \C)}\left( \HH^{\bullet}(\gl(k|k), \gl (k|k)_{\0 }; \Res_{\mu}(L(\lambda))\otimes \Res_{\mu}(L(\lambda))^{*})\right) = (0).
\]  Consequently,
\[
 I_{k|k}(L(\lambda))=\operatorname{Ann}_{\HH^{\bullet}(\gl(k|k), \gl(k|k)_{\0}; \C)}\left( \HH^{\bullet}(\gl(k|k), \gl (k|k)_{\0 }; L(\lambda) \otimes L(\lambda)^{*})\right)=(0) 
\]   by \eqref{E:bifunctordecomp}.  The result then follows.

 To prove $(c),$ we first prove the following claim.

\medskip
\noindent $(*)$ \label{claim3} \emph{The map 
\[
\res: \Ext^{i}_{\mathcal{F}(m|n)_{\chi}}(M,M) \to \Ext^{i}_{\mathcal{F}(k|k)}(M,M) 
\] is injective for $i=0,\dotsc,d$.}
\medskip

By assumption there is a $\mu \in (\fh ')^{*}$ such that, 
\[
\Ext^{i}_{\mathcal{F}(m|n)^{\mu}_{\chi}}(M,M) = \Ext^{i}_{\mathcal{F}(m|n)_{\chi}}(M,M)
\] for $i=0, \dotsc , d.$  Thus it suffices to show 
\[
\res: \Ext^{i}_{\mathcal{F}(m|n)^{\mu}_{\chi}}(M,M) \to \Ext^{i}_{\mathcal{F}(k|k)}(M,M) 
\]  is injective for $i=1, \dotsc ,d.$

To prove this, let $\mathcal{C} \subseteq \mathcal{F}(k|k)$ denote the full subcategory consisting of all objects of $\mathcal{F}(k|k)$ which are $\gl (k|k) \oplus \fh'$-supermodules and semisimple as $\fh'$-supermodules.  One then has a restriction functor $\Res : \mathcal{F}(m|n) \to \mathcal{C}$ which, when composed with the functor which forgets the action of $\fh',$ yields the functor $\Res: \mathcal{F}(m|n) \to \mathcal{F}(k|k).$   Let $P_{\mu}: \mathcal{C} \to \mathcal{F}(k|k)$ be the functor given by 
\[
P_{\mu}(N)= \setof{x \in N}{hx=\mu(h)x \text{ for all } h \in \fh'}.
\]  Since $\fh '$ and $\gl (k|k)$ are commuting superalgebras, this is a supermodule for $\gl(k|k).$  One then has the following factorization of $\Res_{\mu}:$  
\begin{equation}\label{E:functordecomp}
\Res_{\mu} =  P_{\mu} \circ \Res.
\end{equation}

Now, since $\Res_{\mu}: \mathcal{F}(k|k)_{\chi}^{\mu} \to \mathcal{F}(k|k)_{\chi_{\C}}^{\mu}$ is an equivalence of categories by Proposition~\ref{P:serganovaskeylemma}, it follows that the induced map 
\[
\res_{\mu}: \Ext^{i}_{\mathcal{F}(m|n)_{\chi}^{\mu}}(M,M) \to \Ext^{i}_{\mathcal{F}(k|k)_{\chi_{\C}}^{\mu}}(M,M) \hookrightarrow \Ext^{i}_{\mathcal{F}(k|k)}(M,M)
\] is injective for all $i \geq 0$.  But \eqref{E:functordecomp} implies the induced maps satisfy $\res_{\mu} = p_{\mu} \circ \res.$  This along with the fact that the forgetful functor $\mathcal{C} \to \mathcal{F}(k|k)$ induces an injective map in cohomology implies the map
\[
\res: \Ext^{i}_{\mathcal{F}(m|n)_{\chi}^{\mu}}(M,M) \to \Ext^{i}_{\mathcal{F}(k|k)}(M,M) 
\] is injective.  This proves $(*)$.

Now we can prove $(c).$  Let $a \in J$ be an element of degree less than or equal to $d.$   By the commutativity of \eqref{E:commute} one has 
\[
\res (m_{1}(a)) = m_{2}(\res_{\C}(a))=0.
\]   Note, however, that the map $m_{1}$ is grading preserving so $m_{1}(a)$ is of degree no more than $d.$  By $(*),$ $\res$ is injective in this range so $m_{1}(a)=0.$  That is, $a \in  I_{m|n}(M).$   Since $J$ is generated by such elements, it follows that $J \subseteq I_{m|n}(M).$ 
\end{proof}

\subsection{Support Varieties for Simple Supermodules}\label{SS:maintheorem}  In this 
section the support varieties for the simple supermodules of $\gl (m|n)$ will be 
computed.   We continue with our fixed choice of a block of atypicality $k$, 
$\mathcal{F}(m|n)_{\chi_{0}}$, and sequences $\mu_{1}, \mu_{2}, \dotsc;$ $\chi_{1}, \chi_{2}, 
\dotsc ;$ and $T_{1}, T_{2}, \dotsc.$   

Before proceeding we first make an observation which will allow us to reduce to the situation of 
Proposition~\ref{P:diagrams}.  The main concern is to ensure that we are working in $\mathcal{F}_{\chi_{i}}^{\mu_{i}}$ rather than $\mathcal{F}_{\chi_{i}}$ so that Proposition~\ref{P:diagrams}(c) can be applied.  As mentioned after \eqref{E:Phidef2}, if $M$ is an object of 
$\mathcal{F}(m|n)_{\chi_{0}}$ then for some $ N > 0$ (depending on $M$) one has that 
\begin{equation}\label{E:applyingTs}
(T_{i} \circ \dotsb \circ T_{1})M \in \mathcal{F}(m|n)_{\chi_{i}}^{\mu_{i}}
\end{equation} for all $i \geq N.$

Since the translation functors $T_{i}$ are exact, one can lift \eqref{E:applyingTs} to $\Ext$ groups as follows.  Fix $t \geq 0.$  Say 
\[
\mathcal{E}:= \left(0 \to M \to M_{1} \to \dotsb \to M_{t} \to  M \to 0 \right)
\] represents an element of $\Ext^{t}_{\mathcal{F}(m|n)_{\chi_{0}}}(M ,M).$  Then, by \eqref{E:applyingTs}, one can choose $N >0$ (depending on $M_{1}, \dotsc , M_{t}$) so that 
\[
(T_{i} \circ \dotsb \circ T_{1})M_{r} \in \mathcal{F}(m|n)_{\chi_{i}}^{\mu_{i}}
\] for all all $i \geq N$ and $r = 1, \dotsc ,t.$  That is, the induced linear map 
\[
t_{i} \circ \dotsb \circ t_{1}: \Ext^{t}_{\mathcal{F}(m|n)_{\chi_{0}}}(M ,M) \to 
\Ext^{t}_{\mathcal{F}(m|n)_{\chi_{i}}}((T_{i} \circ \dotsb \circ T_{1})M , (T_{i} \circ \dotsb \circ T_{1})M)
\] satisfies 
\begin{equation}\label{E:TsonanExt}
t_{i} \circ \dotsb  \circ t_{1} (\mathcal{E}) \in \Ext^{t}_{\mathcal{F}(m|n)_{\chi_{i}}^{\mu_{i}}}
((T_{i} \circ \dotsb \circ T_{1})M, (T_{i} \circ \dotsb \circ T_{1})M).
\end{equation}
Since $ \Ext^{t}_{\mathcal{F}(m|n)_{\chi_{0}}}(M ,M)$ is finite dimensional we can fix a basis $\mathcal{E}_{1}, \dotsc , \mathcal{E}_{q}.$  Applying \eqref{E:TsonanExt}, one can choose $N >0$ (depending on $\mathcal{E}_{1}, \dotsc , \mathcal{E}_{q}$) so that the induced map 
\[
t_{i} \circ \dotsb \circ t_{1}: 
\Ext^{t}_{\mathcal{F}(m|n)_{\chi_{0}}}(M ,M) \to 
\Ext^{t}_{\mathcal{F}(m|n)_{\chi_{i}}}((T_{i} \circ \dotsb \circ T_{1})M, (T_{i} \circ \dotsb \circ T_{1})M)
\] satisfies 
\begin{align}\label{E:VariousExts}
t_{i} \circ \dotsb \circ t_{1}\left( \Ext^{t}_{\mathcal{F}(m|n)_{\chi_{0}}}(M ,M)\right) & 
\subseteq  \Ext^{t}_{\mathcal{F}(m|n)_{\chi_{i}}^{\mu_{i}}}
((T_{i} \circ \dotsb \circ T_{1})M, (T_{i} \circ \dotsb \circ T_{1})M) \\
 & \subseteq 
\Ext^{t}_{\mathcal{F}(m|n)_{\chi_{i}}}((T_{i} \circ \dotsb \circ T_{1})M, 
(T_{i} \circ \dotsb \circ T_{1})M) \notag
\end{align}
for all $i \geq N.$  However, by assumption each of the translation functors $T_{j}$ is 
an equivalence of categories and, hence, each of the induced maps $t_{j}$ is an isomorphism.  That is, 
\[t_{i} \circ \dotsb \circ t_{1}: 
\Ext^{t}_{\mathcal{F}(m|n)_{\chi_{0}}}(M ,M) \xrightarrow{\simeq} 
\Ext^{t}_{\mathcal{F}(m|n)_{\chi_{i}}}((T_{i} \circ \dotsb \circ T_{1})M, (T_{i} \circ \dotsb \circ T_{1})M).
\]  Therefore, \eqref{E:VariousExts} becomes 
\begin{align}\label{E:VariousExts2}
t_{i} \circ \dotsb \circ t_{1}: \Ext^{t}_{\mathcal{F}(m|n)_{\chi_{0}}}(M ,M) & \xrightarrow{\simeq}  
\Ext^{t}_{\mathcal{F}(m|n)_{\chi_{i}}^{\mu_{i}}}((T_{i} \circ \dotsb \circ T_{1})M, 
(T_{i} \circ \dotsb \circ T_{1})M) \\
 & = \Ext^{t}_{\mathcal{F}(m|n)_{\chi_{i}}}((T_{i} \circ \dotsb \circ T_{1})M, 
(T_{i} \circ \dotsb \circ T_{1})M) \notag
\end{align} for all $i \geq N.$

We can finally state the result we require.  Given a fixed $d \geq 0,$ one can use 
\eqref{E:VariousExts2} to choose $N >0$ (depending on $M$ and $d$) so that \begin{align}\label{E:VariousExts3}
t_{i} \circ \dotsb \circ t_{1}: 
\Ext^{t}_{\mathcal{F}(m|n)_{\chi_{0}}}(M ,M) & \xrightarrow{\simeq}  
\Ext^{t}_{\mathcal{F}(m|n)_{\chi_{i}}^{\mu_{i}}}((T_{i} \circ \dotsb \circ T_{1})M, 
(T_{i} \circ \dotsb \circ T_{1})M) \\
 & = \Ext^{t}_{\mathcal{F}(m|n)_{\chi_{i}}}((T_{i} \circ \dotsb \circ T_{1})M, 
(T_{i} \circ \dotsb \circ T_{1})M) \notag
\end{align} for $t=0, \dotsc ,d$ and all $i \geq N.$  We now 
prove one of the main results of the paper.

\begin{theorem}\label{T:typeA}  Let $\fg =\gl (m|n),$ $r=\operatorname{def}(\fg),$ and $L(\lambda)$ be a simple $\fg$-supermodule of atypicality $k.$   Let $\widetilde{\fe}_{\1} \subseteq \fe_{\1}$ be chosen as in Section~\ref{SS:resindfunctors}. Then, 

\begin{enumerate}
\item [(a)] 
\begin{equation}\label{E:gvariety}
\operatorname{res}^{*}(\widetilde{\fe}_{\1})=\operatorname{res}^{*}\left(\mathcal{V}_{(\fe, \fe_{\0})}\left( L(\lambda)\right) \right) = \mathcal{V}_{(\fg, \fg_{\0})}\left(L(\lambda) \right) \cong \mathbb{A}^{k}. 
\end{equation}
\item [(b)]  
\begin{equation}\label{E:evariety1}
\mathcal{V}_{(\fe ,\fe_{\0})}(L(\lambda)) = W \cdot \widetilde{\fe}_{\1}.
\end{equation}
In particular, $\mathcal{V}_{(\fe ,\fe_{\0})}(L(\lambda))$ is the union of $\binom{r}{k}$ $k$-dimensional subspaces.  In terms of coordinates, if $x_{1}, \dotsc , x_{r}$ is the distinguished basis of $\fe_{\1}$ given in \eqref{E:ebasis}, then 
\begin{equation}\label{E:evariety2}
\mathcal{V}_{(\fe ,\fe_{\0})}(L(\lambda)) = \left\{ \sum_{t=1}^{r}a_{t}x_{t} \;\vert\; a_{t} \in \C \text{ and at least $r-k$ of the $a_{t}$ are zero} \right\}.
\end{equation}

\end{enumerate} 

\end{theorem}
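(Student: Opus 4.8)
The plan is to pin down $\mathcal{V}_{(\fg,\fg_{\0})}(L(\lambda))$ as a coordinate subspace of $\operatorname{MaxSpec}(R)$ using the ideal $J=\ker(\res_{\C})$ of Proposition~\ref{P:diagrams}, and then to read off $\mathcal{V}_{(\fe,\fe_{\0})}(L(\lambda))$ from the rank variety description and the quotient $\res^{*}\colon\fe_{\1}\to\fe_{\1}/W$. Since $L(\lambda)$ lies in a block of atypicality $k$ and all such blocks are equivalent through composites of translation functors (Theorem~\ref{T:atypicalityequivalence}), which preserve both support varieties by Proposition~\ref{P:translationfunctors}, I may assume $L(\lambda)\in\mathcal{F}(m|n)_{\chi_{0}}$. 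Using \eqref{E:cohomringiso} together with the commutative square in the proof of Proposition~\ref{P:diagrams}(a), the map $\res_{\C}$ becomes the surjection $\C[\dot X_{1},\dots,\dot X_{r}]\to\C[\dot X_{1}',\dots,\dot X_{k}']$ obtained by setting $X_{k+1},\dots,X_{r}$ to $0$, so $\dot X_{i}\mapsto\dot X_{i}'$ for $i\le k$ and $\dot X_{i}\mapsto 0$ for $i>k$; hence $J=(\dot X_{k+1},\dots,\dot X_{r})$. In particular $J$ is generated in degrees $\le d:=2r$, $\mathcal{V}(J)\cong\mathbb{A}^{k}$, and a short computation (every complex tuple is the set of elementary symmetric functions of some $k$-tuple, and squaring is surjective on $\C$) identifies $\mathcal{V}(J)$ with $\res^{*}(\widetilde{\fe}_{\1})$, the image of $\{X_{k+1}=\dots=X_{r}=0\}$ under $\fe_{\1}\to\operatorname{MaxSpec}(R)$.

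\emph{The $\fg$-variety.} For $\mathcal{V}(J)\subseteq\mathcal{V}_{(\fg,\fg_{\0})}(L(\lambda))$: by Proposition~\ref{P:diagrams}(b) the map $m_{2}$ of \eqref{E:commute} is injective, i.e.\ $I_{k|k}(L(\lambda))=0$, so commutativity of \eqref{E:commute} forces $\res_{\C}(I_{m|n}(L(\lambda)))=0$, whence $I_{m|n}(L(\lambda))\subseteq J$. For the reverse inclusion, apply \eqref{E:VariousExts3} with $d=2r$ to obtain $N$ for which $M':=(T_{N}\circ\dots\circ T_{1})(L(\lambda))$ lies in $\mathcal{F}(m|n)_{\chi_{N}}^{\mu_{N}}$ and satisfies $\Ext^{i}_{\mathcal{F}(m|n)_{\chi_{N}}}(M',M')=\Ext^{i}_{\mathcal{F}(m|n)_{\chi_{N}}^{\mu_{N}}}(M',M')$ for $i=0,\dots,d$; Proposition~\ref{P:diagrams}(c) then gives $J\subseteq I_{m|n}(M')$, and since $\mathcal{V}_{(\fg,\fg_{\0})}(M')=\mathcal{V}_{(\fg,\fg_{\0})}(L(\lambda))$ by Proposition~\ref{P:translationfunctors}, it follows that $\mathcal{V}_{(\fg,\fg_{\0})}(L(\lambda))\subseteq\mathcal{V}(J)$. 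Combining, $\mathcal{V}_{(\fg,\fg_{\0})}(L(\lambda))=\mathcal{V}(J)=\res^{*}(\widetilde{\fe}_{\1})\cong\mathbb{A}^{k}$; the remaining equality in \eqref{E:gvariety} will follow from (b) below, as $\res^{*}$ is $W$-invariant.

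\emph{The $\fe$-variety.} As in the proof of Proposition~\ref{P:diagrams}(b), fix $\mu\in(\fh')^{*}$ with $L(\lambda)\in\mathcal{F}(m|n)_{\chi}^{\mu}$, so that $\Res L(\lambda)=\Res_{\mu}(L(\lambda))\oplus G_{\mu}(L(\lambda))$ as $\gl(k|k)$-supermodules, with $\Res_{\mu}(L(\lambda))$ simple of full atypicality $k$ by Proposition~\ref{P:serganovaskeylemma}. Proposition~\ref{L:fullatypicality} applied to $\gl(k|k)$ (with $\fa=\widetilde{\fe}$) gives $\mathcal{V}_{(\widetilde{\fe},\widetilde{\fe}_{\0})}(\Res_{\mu}L(\lambda))=\mathcal{V}_{(\widetilde{\fe},\widetilde{\fe}_{\0})}(\C)=\widetilde{\fe}_{\1}$, hence $\mathcal{V}_{(\widetilde{\fe},\widetilde{\fe}_{\0})}(\Res L(\lambda))=\widetilde{\fe}_{\1}$ by \eqref{E:directsum}. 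Since projectivity of $L(\lambda)$ over $\langle x\rangle$ for $x\in\widetilde{\fe}_{\1}$ depends only on the restriction to $\langle x\rangle$, the rank variety description of Section~\ref{SS:detecting} turns this into $\widetilde{\fe}_{\1}\subseteq\mathcal{V}_{(\fe,\fe_{\0})}(L(\lambda))$; because this variety is $W$-stable (the permutations and sign changes of $x_{1},\dots,x_{r}$ are realized by conjugation by elements of $G_{\0}$, under which $L(\lambda)$ is invariant up to isomorphism), one gets $W\cdot\widetilde{\fe}_{\1}\subseteq\mathcal{V}_{(\fe,\fe_{\0})}(L(\lambda))$. Conversely, \eqref{E:resmapsbetweenvarieties} and the $\fg$-computation give $\res^{*}(\mathcal{V}_{(\fe,\fe_{\0})}(L(\lambda)))\subseteq\mathcal{V}_{(\fg,\fg_{\0})}(L(\lambda))=\res^{*}(\widetilde{\fe}_{\1})$, and since $\fe_{\1}\to\fe_{\1}/W$ is the quotient by the finite group $W$ its fibers are single $W$-orbits, so $\mathcal{V}_{(\fe,\fe_{\0})}(L(\lambda))\subseteq W\cdot\widetilde{\fe}_{\1}$. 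Hence $\mathcal{V}_{(\fe,\fe_{\0})}(L(\lambda))=W\cdot\widetilde{\fe}_{\1}$; the presentation \eqref{E:evariety2} and the count of $\binom{r}{k}$ $k$-dimensional subspaces are immediate from the $W$-action on the coordinates dual to $x_{1},\dots,x_{r}$, and $\res^{*}(\mathcal{V}_{(\fe,\fe_{\0})}(L(\lambda)))=\res^{*}(W\cdot\widetilde{\fe}_{\1})=\res^{*}(\widetilde{\fe}_{\1})$ completes \eqref{E:gvariety}.

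\emph{Main obstacle.} The delicate step is $\mathcal{V}_{(\fg,\fg_{\0})}(L(\lambda))\subseteq\mathcal{V}(J)$: before Proposition~\ref{P:diagrams}(c) can be applied one must move $L(\lambda)$ into a weight-refined block $\mathcal{F}(m|n)_{\chi_{N}}^{\mu_{N}}$ without disturbing the $\Ext$-groups up to degree $2r$, and this control of the translation functors at the $\Ext$-level is precisely what \eqref{E:VariousExts3} supplies, so the genuinely hard input there is already in place. The other point needing care is the geometric-invariant-theory fact that the fibers of $\fe_{\1}\to\fe_{\1}/W$ are single $W$-orbits, which is what upgrades the containment of images under $\res^{*}$ to the exact identity $\mathcal{V}_{(\fe,\fe_{\0})}(L(\lambda))=W\cdot\widetilde{\fe}_{\1}$.
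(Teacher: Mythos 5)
Your proposal is correct and follows essentially the same route as the paper: the same reduction via translation functors into a weight-refined block, the same use of Proposition~\ref{P:diagrams}(b)(c) together with commutativity of \eqref{E:commute} to pin down $I_{m|n}(L(\lambda))=\ker(\res_{\C})$, the same use of Propositions~\ref{P:serganovaskeylemma} and~\ref{L:fullatypicality} plus the rank variety description to get $\widetilde{\fe}_{\1}\subseteq\mathcal{V}_{(\fe,\fe_{\0})}(L(\lambda))$, and the same reliance on the fibers of $\res^{*}$ being $W$-orbits. The only cosmetic difference is that you identify $\mathcal{V}(J)=\res^{*}(\widetilde{\fe}_{\1})$ by explicitly writing $J=(\dot X_{k+1},\dots,\dot X_{r})$ and solving for coordinates, whereas the paper reaches the same equalities by a finite-to-one plus dimension/irreducibility argument inside $\mathbb{A}^{k}$.
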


\begin{proof} One proves (a) as follows.  First note that by the results of 
Section~\ref{SS:translationfunctors} one may assume without a loss of generality 
that $L(\lambda)$ lies in the block of atypicality $k,$ $\mathcal{F}(m|n)_{\chi_{0}},$ 
fixed before Proposition~\ref{P:diagrams}.  By using Proposition~\ref{P:translationfunctors} 
and~\eqref{E:applyingTs} we can replace $L(\lambda)$ with $(T_{i} \circ \dotsb \circ T_{1})L(\lambda)$ 
for sufficently large $i$ and 
assume without loss that $L(\lambda)$ is a simple supermodule of atypicality $k$ which lies in 
$\mathcal{F}(m|n)^{\mu_{i}}_{\chi_{i}}.$   Furthermore, by \eqref{E:VariousExts3} we can further 
assume (choosing an even larger $i$ if necessary) that  $$\Ext^{t}_{\mathcal{F}(m|n)_{\chi}}(L(\lambda),
L(\lambda)) = \Ext^{t}_{\mathcal{F}(m|n)_{\chi}^{\mu_{i}}}(L(\lambda),L(\lambda))$$  for $t=0, \dotsc, d;$ 
here, as in Proposition~\ref{P:diagrams}, $d$ is fixed so that $\operatorname{Ker}(\res_{\C})$ is generated by elements of degree 
no more than $d.$

Now by Proposition~\ref{P:diagrams}(c) we have that 
$\operatorname{Ker}(\res_{\C}) \subseteq I_{m|n}(L(\lambda)).$   On the other hand, it follows by the commutativity of \eqref{E:commute} and the injectivity of $m_{2}$ (Proposition~\ref{P:diagrams}(b)) that $I_{m|n}(L(\lambda)) \subseteq \operatorname{Ker}(\res_{\C}).$  Therefore, $I_{m|n}(L(\lambda)) = \operatorname{Ker}(\res_{\C}).$  Using the surjectivity of $\res_{\C}$ and the description of $\HH^{\bullet}(\gl(k|k), \gl (k|k)_{\0 }; \C))$ given in \eqref{E:cohomringiso}, one has 
\begin{align*}
\mathcal{V}_{(\gl (m|n), \gl (m|n)_{\0})}(L(\lambda)) &\cong \operatorname{MaxSpec}\left(\HH^{\bullet}(\gl(m|n), \gl (m|n)_{\0 }; \C)/\operatorname{Ker}(\res_{\C}) \right) \\
&\cong \operatorname{MaxSpec}\left(\HH^{\bullet}(\gl(k|k), \gl (k|k)_{\0 }; \C) \right) \\
& \cong \operatorname{MaxSpec}\left(\C[\dot{X}_{1}, \dotsc , \dot{X}_{k}] \right) \\
&\cong \mathbb{A}^{k}.
\end{align*}

Now consider $\mathcal{V}_{(\fe, \fe_{\0})}(L(\lambda)).$  Recall that $\tilde{\fe}_{\1} \subseteq \fe_{\1}.$  Since $L(\lambda) \in \mathcal{F}(m|n)^{\mu_{i}}_{\chi_{i}},$ it follows from 
Proposition~\ref{P:serganovaskeylemma} that $L(\lambda)$ contains a 
simple $\gl(k|k)$-supermodule of atypicality $k$ as a direct summand (namely, 
$\Res_{\mu_{i}}L(\lambda)$).  By Proposition~\ref{L:fullatypicality} and the rank variety 
description of $\widetilde{\fe}$ support varieties discussed in Section~\ref{SS:detecting}, 
it must be that for any $x \in \tilde{\fe}_{\1},$ $L(\lambda)$ is not projective as an $\langle 
x \rangle$-supermodule. Here $\langle x \rangle$ denotes the Lie subsuperalgebra generated by $x.$  
This statement is equally true if we view $x$ as an element of $\fe_{\1}.$  Thus, we have $\widetilde{\fe}_{\1} \subseteq \mathcal{V}_{(\fe , \fe_{\0})}(L(\lambda)).$ Therefore by \eqref{E:resmapsbetweenvarieties} one has,  
\begin{equation}\label{E:anotherdamnequation}
\res^{*}(\widetilde{\fe}_{\1}) \subseteq \res^{*}(\mathcal{V}_{(\fe, \fe_{\0})}(L(\lambda)) \subseteq \mathcal{V}_{(\fg, \fg_{\0})}(L(\lambda)) \cong \mathbb{A}^{k}.
\end{equation}
However, by \eqref{E:resmapsbetweenvarieties} the map $\res^{*}$ is finite-to-one so $\res^{*}\left( \widetilde{\fe}_{\1}\right)$ is a $k$-dimensional closed subset of $\mathbb{A}^{k}.$  However $\mathbb{A}^{k}$ is a $k$-dimensional irreducible variety.  Therefore $\res^{*}\left( \widetilde{\fe}_{\1}\right)=\mathbb{A}^{k}$ and all the containments in \eqref{E:anotherdamnequation} must be equalities.  This proves $(a).$

To prove $(b),$ one recalls from \cite[(6.1.3)]{BKN} that the fibers of the map $\res^{*}$ are precisely the orbits of $W.$  This along with \eqref{E:anotherdamnequation} implies \eqref{E:evariety1}.  To obtain \eqref{E:evariety2}, one uses \eqref{E:evariety1} and the explicit description of the action of $W$ on $X_{1}, \dotsc, X_{r}$ provided in Section~\ref{SS:eforgl}.
\end{proof}

\subsection{}\label{SS:conclusions} Note that the above theorem confirms several 
conjectures for the simple supermodules of $\gl (m|n)$.  The first observation is that the 
second equality in \eqref{E:gvariety} affirms a speculation of the authors in \cite[Section 6.2]{BKN}.  

Second, observe that 
\[
\dim \mathcal{V}_{(\fe ,\fe_{\0})}(L(\lambda)) = k = \operatorname{atyp}(L(\lambda)).
\] This agrees with \cite[Conjecture 7.2.1]{BKN}, where it was conjectured that the dimension of the 
$\fe$ support variety of a simple supermodule should equal its atypicality.  
The verification of this conjecture justifies the general 
(and functorial) definition of atypicality for finite dimensional supermodules of $\gl (m|n)$
by setting $\operatorname{atyp}(M):=\dim \mathcal{V}_{(\fe ,\fe_{\0})}(M)$ for 
all $M$ in ${\mathcal F}(m|n)$. 

Finally recall that the superdimension of a supermodule $M$ is the integer 
$\dim M_{\0}-\dim M_{\1}.$  In \cite[Conjecture 3.1]{kacwakimoto} Kac and Wakimoto conjectured that 
for a simple basic classical Lie superalgebra, $\fg$, the superdimension of a simple supermodule $L$ 
is nonzero if and only if $\operatorname{atyp}(L) = \operatorname{def}(\fg).$  As was discussed in 
\cite[Section 7.3]{BKN}, the validity of \cite[Conjecture 7.2.1]{BKN} proved above for $\gl(m|n)$ implies 
the ``only if'' direction of the Kac--Wakimoto conjecture in this case.  It should be noted 
that this direction of the conjecture was also recently proved in \cite{dufloserganova}.

\section{Clifford Superalgebras, Superdimension, and Divisibility}\label{S:clifford}

\subsection{}\label{SS:subalg}  In this section we show that the codimension of the $\fe$ 
support variety of a supermodule $M$ is closely related to the $2$-divisibility of the dimension of $M$.  
Another consequence is that if $\mathcal{V}_{(\fe ,\fe_{\0})}(M)$ is a proper subset of 
$\mathcal{V}_{(\fe , \fe_{\0})}(\C),$ then the superdimension of $M$ is necessarily zero.  
First we establish some general results.

\noindent 
Let $\fc=\fc_{\0} \oplus \fc_{\1}$ be a Lie superalgebra which satisfies the following assumptions:  
\begin{enumerate}
\item  $\fc_{\0}=[\fc_{\1},\fc_{\1}]$ and is an abelian Lie algebra;
\item $[\fc_{\0}, \fc_{\1}]=0$. 
\end{enumerate}

Let $\mathcal{F}=\mathcal{F}(\fc, \fc_{\0})$ be the category of all finite dimensional $\fc$-supermodules which are finitely semisimple as $\fc_{\0}$-supermodules. Note that if $\fc_{\1}$ is a subspace of $\fe_{\1}$ 
(where $\fe$ is the detecting subalgebra discussed in Section~\ref{SS:detecting}), then $\fc := [\fc_{\1},\fc_{\1}] \oplus \fc_{\1}$ satisfies the above assumptions.  This is the context where the general theory 
will be applied.

Given  $\chi \in \fc_{\0}^{*},$ let $\C_{\chi}$ denote the unique simple $\fc_{\0}$-supermodule of 
dimension $1$ (concentrated in degree $\0$) with action $x.v=\chi(x)v$ for all $x \in \fc_{\0}$ and 
$v \in \C_{\chi}.$  Given  $\chi \in \fc_{\0}^{*},$ let $\mathcal{F}_{\chi}$ denote the full subcategory of $\mathcal{F}$ consisting of all supermodules $M$ such that all composition factors are isomorphic to $\C_{\chi}$ when viewed as a $\fc_{\0}$-supermodule by restriction.  By \cite[Lemma~5.1.2]{BKN} one has the following decomposition of the category $\mathcal{F},$ 
\[
\mathcal{F} = \bigoplus_{\chi \in \fc_{\0}^{*}} \mathcal{F}_{\chi}.
\]

\subsection{Clifford Superalgebras} Given a $\chi \in \fc_{\0}^{*}$ one can define a Clifford superalgebra as follows.  Define a bilinear form  
\[
(\;,\;): \fc_{\1} \otimes \fc_{\1} \to \C
\]
by 
\[
(x,y) = \chi([x,y]).
\]  Since $[x,y]=[y,x]$ for all $x,y \in \fc_{\1},$ this bilinear form is symmetric.  Let $A_{\chi}$ denote the Clifford superalgebra defined by 
\begin{equation}\label{E:cliffordalgdef}
A_{\chi}= T(\fc_{\1})/I_{\chi},
\end{equation} where $T(\fc_{\1})$ denotes the tensor superalgebra on the superspace $\fc_{\1}$ and $I_{\chi}$ is the ideal of $T(\fc_{\1})$ generated by the set
\[
\setof{ x\otimes y + y \otimes x - (x,y)}{x,y \in \fc_{\1}}.
\]  The $\Z_{2}$-grading on $A_{\chi}$ is obtained by setting $\p{x}=\1$ for all $x \in \fc_{\1}.$  Write $A_{\chi}$-smod for the category of all finite dimensional $A_{\chi}$-supermodules.  The key result is the following proposition which was used by Penkov \cite{penkov} in his study of the Lie superalgebra $\mathfrak{q}(n)$ (see also \cite[Section 2.2]{frisk}).

\begin{prop}\label{P:categoryequiv}  The category $\mathcal{F}_{\chi}$ is isomorphic to the category $A_{\chi}$-smod.
\end{prop}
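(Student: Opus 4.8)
The claim is that $\mathcal{F}_\chi \cong A_\chi\text{-smod}$ as categories, so the plan is to exhibit a functor in each direction and check they are mutually inverse. The central structural observation is that a $\fc$-supermodule $M$ on which $\fc_{\0}$ acts by the single character $\chi$ is precisely the same data as a representation of $\fc_{\1}$ in which the odd operators satisfy the anticommutation relation $xy+yx=\chi([x,y])\cdot\mathrm{id}$ coming from the Lie bracket in $U(\fc)$; this is exactly the defining relation of $A_\chi$. So I would first make this precise: given $M\in\mathcal{F}_\chi$, restrict the $\fc$-action to $\fc_{\1}$ and observe that for homogeneous $x,y\in\fc_{\1}$ one has, inside $\operatorname{End}_\C(M)$, the identity $x\cdot(y\cdot m)+y\cdot(x\cdot m)=[x,y]\cdot m=\chi([x,y])m$, using that $[x,y]\in\fc_{\0}=[\fc_{\1},\fc_{\1}]$ acts as the scalar $\chi([x,y])$ because every composition factor of $M|_{\fc_{\0}}$ is $\C_\chi$ and $\fc_{\0}$ is abelian (so the action of $\fc_{\0}$ is by a nilpotent-plus-scalar, but in fact the commutator relation forces it to be scalar — more carefully, $\C_\chi$ being the only composition factor and $\fc_{\0}$ abelian means $x-\chi(x)$ acts nilpotently for $x\in\fc_{\0}$; one then uses $(2)$, $[\fc_{\0},\fc_{\1}]=0$, to see these nilpotent parts commute with all of $\fc_{\1}$ and hence, by a short argument, must vanish, or alternatively one simply defines $\mathcal{F}_\chi$ via semisimplicity so the scalar action is automatic). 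By the universal property of the tensor superalgebra modulo $I_\chi$, this extends uniquely to an $A_\chi$-supermodule structure on the underlying superspace of $M$. Conversely, given an $A_\chi$-supermodule $V$, define a $\fc$-action: let $\fc_{\1}$ act via the image of $\fc_{\1}\hookrightarrow T(\fc_{\1})\twoheadrightarrow A_\chi$, and let $\fc_{\0}$ act by the scalar $\chi$; the relation $x y+yx=(x,y)$ in $A_\chi$ is precisely what is needed to check this is a Lie superalgebra action (the only bracket to verify is $[x,y]$ for $x,y\in\fc_{\1}$, and $[\fc_{\0},-]=0$ handles the rest by assumption $(2)$).

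Next I would check these two constructions are functorial and mutually inverse. A $\C$-linear map $f\colon M\to M'$ is $\fc$-linear (in the graded-$\operatorname{Hom}$ sense of Section~\ref{SS:prelims}) if and only if it commutes, up to the sign $(-1)^{\p f\,\p x}$, with the action of $\fc_{\1}$ and $\fc_{\0}$; since $\fc_{\0}$ acts by a central scalar in both $\mathcal{F}_\chi$ and $A_\chi\text{-smod}$, the condition reduces to compatibility with $\fc_{\1}$, which is visibly the same as $A_\chi$-linearity because $A_\chi$ is generated by $\fc_{\1}$. Finite-dimensionality is preserved in both directions (the underlying superspace is unchanged, and $A_\chi$ is finite-dimensional so every finite-dimensional $\fc$-module in $\mathcal{F}_\chi$ gives a finite-dimensional $A_\chi$-module and vice versa). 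The composites in both directions are the identity on objects (the underlying superspace and the action of $\fc_{\1}$ are literally unchanged) and on morphisms, so we get an isomorphism — not merely an equivalence — of categories, which is what is asserted.

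The main obstacle, and the only genuinely nontrivial point, is justifying that $\fc_{\0}$ acts by the \emph{exact} scalar $\chi$ on an object of $\mathcal{F}_\chi$ rather than merely having $\C_\chi$ as its unique composition factor over $\fc_{\0}$. The resolution uses assumption $(2)$: for $z\in\fc_{\0}$, the operator $z-\chi(z)\cdot\mathrm{id}$ is nilpotent on $M$ (single composition factor, $\fc_{\0}$ abelian) and commutes with the action of all of $\fc_{\1}$ (since $[\fc_{\0},\fc_{\1}]=0$ and $\fc_{\0}$ is abelian), hence commutes with the action of all of $U(\fc)$; but one also knows $z=[x,y]$ for suitable $x,y\in\fc_{\1}$ by assumption $(1)$, and a short computation — expressing $z-\chi(z)$ as a commutator of operators that are themselves (together with $z-\chi(z)$) simultaneously triangulable — forces it to be both nilpotent and, being a "commutator-type" element on an indecomposable block, zero; alternatively, and most cleanly, one simply invokes \cite[Lemma~5.1.2]{BKN} which already furnishes the block decomposition $\mathcal{F}=\bigoplus_\chi\mathcal{F}_\chi$ in a form guaranteeing the scalar action, so this subtlety is handled upstream and the present proof can take it as given. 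Everything else is bookkeeping with the universal property of $A_\chi$ and the definition of morphisms in the category of $\fc$-supermodules.
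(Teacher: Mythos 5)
Your proof follows essentially the same approach as the paper's: observe that the relation $x\cdot(y\cdot m)+y\cdot(x\cdot m)=\chi([x,y])m$ holding on objects of $\mathcal{F}_\chi$ is exactly the defining relation of $A_\chi$, extend by the universal property, run the converse construction using $\fc_{\1}\hookrightarrow A_\chi$ together with the scalar action of $\fc_{\0}$, and check the morphisms match up. This is correct and complete.

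One cautionary remark on your parenthetical alternative. Your proposed ``short argument'' that the nilpotent parts $z-\chi(z)$ for $z\in\fc_{\0}$ must vanish because they commute with $U(\fc)$ and are commutators on an indecomposable block does not actually go through: a nilpotent operator commuting with the whole enveloping algebra need not be zero, and being of ``commutator type'' imposes no such vanishing either. The point is moot here only because $\mathcal{F}=\mathcal{F}(\fc,\fc_{\0})$ is by definition the category of supermodules that are \emph{finitely semisimple} over $\fc_{\0}$ (Section~\ref{SS:relcohom}), so the $\fc_{\0}$-action on any $M\in\mathcal{F}_\chi$ is literally the scalar $\chi$ from the outset -- which is the resolution you correctly identify as the clean one, and is what the paper's proof silently uses.
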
  

\begin{proof}  Let $M$ be an object in $\mathcal{F}_{\chi}.$  Since $\fc_{\1}$ generates $A_{\chi},$ and for $x,y \in \fc_{\1}$ and $m \in M$ one has 
\[
x.(y.m)+y.(x.m) = [x,y].m = \chi ([x,y])m = (x,y)m,
\] one has a well defined action of $A_{\chi}$ on $M.$  The $\Z_{2}$-grading on $M$ is compatible with this action so in fact $M$ is an $A_{\chi}$-supermodule.

Conversely, let $M$ be an $A_{\chi}$-supermodule.  We then have an action by $\fc_{\1}$ on $M$ via the canonical inclusion $\fc_{\1} \hookrightarrow A_{\chi}.$  This extends to an action of $\fc_{\0}$ by having $[x,y] \in \fc_{\0}$ act by $[x,y].m=\chi([x,y])m$ for all $m \in M.$  It is straightforward to verify that this action along with its given $\Z_{2}$-grading makes $M$ into a $\fc$-supermodule.

One also can verify that a linear map which defines a supermodule homomorphism in one category defines a homomorphism in the other category.  That is, the morphisms in the two categories concide.
\end{proof}

Recall that Schur's Lemma in this setting states that $\dim\Hom_{\fc}(L,L)$ equals $1$ or $2$ for 
any simple $\fc$-supermodule $L$ in $\mathcal{F}.$  We say $L$ is type \texttt{Q} if the 
$\operatorname{Hom}$-space is two dimensional, and type \texttt{M} otherwise.  

\begin{prop}\label{P:factsaboutFchi} Let $\chi \in \fc_{\0}^{*}.$  Set $z=\dim\ \{\, x \in \fc_{\1} \;\vert\; (x,y)=0 \text{ for all } y \in \fc_{\1}\,\}$, $n=\dim \fc_{\1} - z$,  and $\widetilde{n}=\lfloor (n+1)/2 \rfloor.$ Then the following statements hold for the category $\mathcal{F}_{\chi}.$
\begin{enumerate}
\item [(a)] There is a unique simple supermodule in $\mathcal{F}_{\chi},$ which we denote by $S(\chi),$ of type $\texttt{M}$ if $n$ is even, and of type $\texttt{Q}$ otherwise.  The supermodule $S(\chi)$ has dimension $2^{\widetilde{n}}.$  If $n >0,$ then $S(\chi)$ has superdimension $0$.
\item [(b)] Write $P(\chi)$ for the projective cover of $S(\chi).$ Then $P(\chi)$ is also injective and of dimension 
\[
\dim P(\chi)= \begin{cases}  2^{\dim(\fc_{\1})-\widetilde{n}}, & \text{ if $S(\chi)$ is of type $\texttt{M};$}\\
 2^{\dim(\fc_{\1})-\widetilde{n}+1}, & \text{ if $S(\chi)$ is of type $\texttt{Q}.$}
\end{cases}
\]  Furthermore, $P(\chi)$ has superdimension zero.
\end{enumerate}

\end{prop}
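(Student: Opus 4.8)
The plan is to reduce everything to the structure theory of Clifford superalgebras over $\C$. By Proposition~\ref{P:categoryequiv} the category $\mathcal{F}_\chi$ is isomorphic to $A_\chi$-smod, so we may work entirely with finite dimensional supermodules over the Clifford superalgebra $A_\chi = T(\fc_\1)/I_\chi$. First I would analyze the structure of $A_\chi$ itself. Choosing a basis of $\fc_\1$ adapted to the symmetric bilinear form $(\,,\,)$, we may split off the radical: $\fc_\1 = \fc_\1^{\mathrm{rad}} \oplus \fc_\1'$ where $\fc_\1^{\mathrm{rad}}$ has dimension $z$ and the form is nondegenerate on $\fc_\1'$ (dimension $n$). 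The $z$ radical generators square to $0$ (up to scalars) and anticommute with everything, so they generate an exterior-algebra factor; the nondegenerate $n$-dimensional part gives the classical complex Clifford algebra $C_n$. Thus $A_\chi \cong C_n \otimes \Lambda(\fc_\1^{\mathrm{rad}})$ as superalgebras, where $\Lambda$ is the exterior superalgebra (all generators odd).

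Next I would recall the classical facts about $C_n$ over $\C$: if $n$ is even, $C_n$ is a simple superalgebra Morita-equivalent to $\C$, with a unique simple supermodule of dimension $2^{n/2}$ of type $\mathtt{M}$; if $n$ is odd, $C_n$ has a unique simple supermodule of dimension $2^{(n+1)/2}$ of type $\mathtt{Q}$. In both cases one writes $\widetilde n = \lfloor (n+1)/2\rfloor$ so the simple module has dimension $2^{\widetilde n}$. Tensoring with the local superalgebra $\Lambda(\fc_\1^{\mathrm{rad}})$ (which has a unique simple, the trivial one-dimensional module) does not change the count of simples or their type, and multiplies projective covers by the whole of $\Lambda(\fc_\1^{\mathrm{rad}})$, which has dimension $2^z$. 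This gives (a): the unique simple $S(\chi)$ has dimension $2^{\widetilde n}$, type $\mathtt{M}$ if $n$ even and $\mathtt{Q}$ if $n$ odd. For the superdimension statement in (a), note that when $n>0$ at least one odd generator $x$ of $C_n$ with $(x,x)\neq 0$ acts invertibly and oddly on $S(\chi)$, hence gives an odd isomorphism $S(\chi)_\0 \xrightarrow{\sim} S(\chi)_\1$, forcing superdimension $0$.

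For (b): since $A_\chi$ is a finite dimensional superalgebra which is Frobenius (Clifford algebras and exterior algebras are both Frobenius, and a tensor product of Frobenius algebras is Frobenius), it is self-injective, so $P(\chi)$ — being a projective indecomposable — is also injective. For its dimension: $A_\chi$ itself, as a left module over itself, decomposes into copies of $P(\chi)$; more precisely $A_\chi \cong P(\chi)^{\oplus a}$ (in the underlying even category, up to parity shifts) where $a = \dim S(\chi) = 2^{\widetilde n}$ in the type $\mathtt{M}$ case and $a = 2^{\widetilde n - 1}$ in the type $\mathtt{Q}$ case (in type $\mathtt{Q}$ the regular module is a sum of $\dim_{\mathbb C}\operatorname{End}(S(\chi)) $-adjusted copies; concretely $\dim A_\chi = a \cdot \dim P(\chi)$ with $a$ the multiplicity). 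Since $\dim A_\chi = 2^{\dim \fc_\1}$, solving gives $\dim P(\chi) = 2^{\dim(\fc_\1) - \widetilde n}$ in the $\mathtt{M}$ case and $2^{\dim(\fc_\1)-\widetilde n + 1}$ in the $\mathtt{Q}$ case, as claimed. Finally, the superdimension of $P(\chi)$ is zero: the parity change functor $\Pi$ sends $P(\chi)$ to the projective cover of $\Pi S(\chi)$, and since $A_\chi$ is generated by odd elements it contains an odd unit-up-to-scalar or at least an odd element pairing the two parities of the regular module; more cleanly, tensoring with the one-dimensional odd piece inside $\Lambda(\fc_\1^{\mathrm{rad}})$ or with an odd invertible element of $C_n$ (when $n>0$) interchanges $P(\chi)_\0$ and $P(\chi)_\1$. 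When $n = z = 0$ the algebra is just $\C$ and $P(\chi) = S(\chi) = \C$ has superdimension $1$, not $0$ — so strictly one needs $\dim \fc_\1 > 0$ here; I would check the intended hypotheses, but under the standing assumption that $\fc_\1 \neq 0$ (which holds in the application, where $\fc_\1 \subseteq \fe_\1$ is nonzero), the pairing argument applies and $P(\chi)$ has superdimension $0$.

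The main obstacle I expect is bookkeeping the type $\mathtt{Q}$ case correctly — getting the factor of $2$ in $\dim P(\chi)$ right requires care with the distinction between $A_\chi$-supermodules and modules over the underlying even algebra, and with how the multiplicity of $P(\chi)$ in the regular representation interacts with $\dim_{\mathbb C}\operatorname{End}_{A_\chi}(S(\chi)) = 2$. The cleanest route is to invoke the known super Wedderburn/Artin–Wedderburn decomposition for finite dimensional semisimple and self-injective superalgebras over $\C$ rather than re-deriving it.
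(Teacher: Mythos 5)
Your strategy is close to the paper's, with one notable difference in how the structural input for part (a) is obtained. The paper simply cites \cite{brundankleshchev,penkov,frisk} for the classification of simples (type, dimension, superdimension) and for self-injectivity of $A_\chi$, whereas you re-derive these by splitting off the radical of the form and writing $A_\chi \cong C_n \otimes \Lambda(\fc_{\1}^{\mathrm{rad}})$, then invoking the super Wedderburn theory of the nondegenerate Clifford algebra $C_n$. Both are legitimate; your route is more self-contained but relies on carefully tracking the type-$\mathtt{Q}$ bookkeeping, which is exactly the point you flag as the hard part. For the dimension of $P(\chi)$ the two arguments are in fact the same argument in different clothing: the paper observes that $U(\fc)\otimes_{U(\fc_{\0})}\C_{\chi}$ is projective, hence a direct sum of copies of $P(\chi)$, and computes the multiplicity by Frobenius reciprocity, $\dim\Hom_{\fc}(U(\fc)\otimes_{U(\fc_{\0})}\C_{\chi},S(\chi))=\dim S(\chi)=2^{\widetilde n}$, while $\dim\Hom_{\fc}(P(\chi),S(\chi))$ is $1$ or $2$ according to type. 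Your version decomposes the left regular $A_\chi$-module, which is the same module (since $U(\fc)\otimes_{U(\fc_{\0})}\C_{\chi}$ is exactly the regular $A_\chi$-module); the Frobenius-reciprocity computation is arguably the cleaner way to handle the factor of $2$ in the $\mathtt{Q}$ case, since it produces it automatically from $\dim\operatorname{End}(S(\chi))$ rather than from a Wedderburn count.

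Your observation about the degenerate case $\fc_{\1}=0$ is a genuine one: if $n=z=0$ then $P(\chi)=\C$ has superdimension $1$, so the final assertion of (b) is not literally correct without assuming $\fc_{\1}\neq 0$. The paper's proof handles $n=0$ by asserting that $U(\fc)\otimes_{U(\fc_{\0})}\C_{\chi}$ has superdimension $0$, which is true only when $\dim\fc_{\1}>0$ (giving an exterior algebra on $\geq 1$ odd generators). In the application (Theorem~\ref{T:codimtheorem}) one always has $d>0$, hence $\fc_{\1}\neq 0$, so nothing breaks downstream; but you are right that the proposition as stated is silently assuming $\fc_{\1}\neq 0$ for that last sentence, and your caveat is the correct way to record it.
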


\begin{proof}  By applying Proposition~\ref{P:categoryequiv}, the results in part (a) and the statement on injectivity in part (b) follow from \cite{brundankleshchev, penkov,frisk}.  It only remains to calculate the dimension of $P(\chi).$  

Since $\C_{\chi}$ is projective in $\mathcal{F}(\fc_{\0},\fc_{\0})$ and the induction functor is exact, 
$U(\fc)\otimes_{U(\fc_{\0})}\C_{\chi}$ is projective in $\mathcal{F}_{\chi}.$   Hence, 
\[
\dim U(\fc)\otimes_{U(\fc_{\0})}\C_{\chi} = a \dim P(\chi) 
\] for some positive integer $a.$  By Frobenius reciprocity, one has
\begin{align*}
\dim \Hom_{\fc}(U(\fc)\otimes_{U(\fc_{\0})}\C_{\chi}, S(\chi)) =  \dim \Hom_{\fc_{\0}}(\C_{\chi}, S(\chi))=\dim S(\chi)  = 2^{\widetilde{n}}.
\end{align*}  On the other hand,
\begin{align*}
\dim \Hom_{\fc}(U(\fc)\otimes_{U(\fc_{\0})}\C_{\chi}, S(\chi)) &= a \dim \Hom_{\fc}(P(\chi), S(\chi)) \\
    &=  \begin{cases} a, & \text{ if $S(\chi)$ is of type $\texttt{M};$}\\
                                                    2a, & \text{ if $S(\chi)$ is of type $\texttt{Q}.$} \end{cases} 
\end{align*}

By the PBW theorem $\dim U(\fc)\otimes_{U(\fc_{\0})}\C_{\chi}=2^{\dim(\fc_{\1})}.$  Combining this with the above calculations one obtains 
\begin{equation}\label{E:Pchidim}
\dim P(\chi)= \begin{cases}  2^{\dim(\fc_{\1})-\widetilde{n}}, & \text{ if $S(\chi)$ is of type $\texttt{M};$}\\
 2^{\dim(\fc_{\1})-\widetilde{n}+1}, & \text{ if $S(\chi)$ is of type $\texttt{Q}.$}
\end{cases}
\end{equation}
Lastly, if $n>0,$ then the simple supermodule in $\mathcal{F}_{\chi}$ has superdimension zero and so all supermodules in $\mathcal{F}_{\chi}$ also have superdimension $0$.  In particular, this holds for $P(\chi)$.  If $n=0$ then $P(\chi)=U(\fc) \otimes_{U(\fc_{\0})}\C_{\chi},$ which has superdimension $0.$  
\end{proof}

\subsection{\bf $2$-Divisibility}\label{S:supportvarieties}  Let $\fe$ be a detecting Lie superalgebra 
for ${\mathfrak g}$ as discussed in Section~\ref{SS:detecting}. The following theorem relates the codimension of 
the support variety of a supermodule in $\mathcal{F}(\fe, \fe_{\0})$ to the $2$-divisibility of its 
dimension and to its superdimension. 

\begin{theorem}\label{T:codimtheorem}  Let ${\mathfrak g}$ be classical Lie superalgebra with detecting subalgebra $\fe.$  Let
$M$ be an object of $\mathcal{F}(\fe, \fe_{\0})$ and let 
\[
d= \dim\mathcal{V}_{(\e,\e_{\0})}(\C)-\dim\mathcal{V}_{(\e,\e_{\0})}(M)
\]
denote the codimension of the variety $\mathcal{V}_{(\e,\e_{\0})}(M).$  Then, 
\[
2^{\lfloor d/2 \rfloor} \;\vert\; \dim M.
\]  Furthermore, if $d > 0,$ then $M$ has superdimension $0.$
\end{theorem}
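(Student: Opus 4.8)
The plan is to reduce the statement about an arbitrary $M \in \mathcal{F}(\fe,\fe_{\0})$ to a computation inside a single Clifford superalgebra, using the rank variety description of $\mathcal{V}_{(\fe,\fe_{\0})}(M)$ together with the block decomposition recalled in Section~\ref{SS:subalg}. First I would recall that $\mathcal{V}_{(\fe,\fe_{\0})}(M) \cong \mathcal{V}_{(\fe,\fe_{\0})}^{\text{rank}}(M)$ is a closed conical subvariety of $\fe_{\1}$; its dimension $\dim \fe_{\1} - d$ is therefore the dimension of a generic linear subspace it contains, or rather one argues via a generic point. The idea is to pick a subspace $\fc_{\1} \subseteq \fe_{\1}$ of dimension $d$ which is transverse to $\mathcal{V}_{(\fe,\fe_{\0})}(M)$ in the sense that $\fc_{\1} \cap \mathcal{V}_{(\fe,\fe_{\0})}(M) = \{0\}$; such a subspace exists precisely because $\mathcal{V}_{(\fe,\fe_{\0})}(M)$ has codimension $d$. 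Setting $\fc := [\fc_{\1},\fc_{\1}] \oplus \fc_{\1}$, one gets (as noted in Section~\ref{SS:subalg}) a Lie superalgebra satisfying assumptions (1) and (2), so the theory of Sections~\ref{SS:subalg}--\ref{S:supportvarieties} applies to the restriction $M|_{\fc}$.

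Next I would unwind what transversality means for the $\fc$-module structure of $M$. The rank variety is exactly the locus of $x \in \fe_{\1}$ on which $M$ fails to be projective as an $\langle x\rangle$-supermodule; so $\fc_{\1} \cap \mathcal{V}_{(\fe,\fe_{\0})}(M) = \{0\}$ says $M$ is projective over $\langle x \rangle$ for every nonzero $x \in \fc_{\1}$. Decompose $M|_{\fc} = \bigoplus_{\psi \in \fc_{\0}^{*}} M_{\psi}$ according to the block decomposition $\mathcal{F}(\fc,\fc_{\0}) = \bigoplus_{\psi} \mathcal{F}(\fc,\fc_{\0})_{\psi}$ of Section~\ref{SS:subalg}. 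Using Proposition~\ref{P:categoryequiv}, $M_{\psi}$ is a module over the Clifford superalgebra $A_{\psi}$ attached to the symmetric form $(x,y) = \psi([x,y])$ on $\fc_{\1}$. The key point I would prove is that for every $\psi$ appearing, the form on $\fc_{\1}$ attached to $\psi$ must be \emph{nondegenerate}: if it had a nonzero radical vector $x$, then $\langle x \rangle$ would act on $M_{\psi}$ through the exterior-algebra-type relation $x^{2} = 0$, and a nonzero module over $\C[x]/(x^{2})$ is never projective/free unless... — more carefully, projectivity of $M_{\psi}$ over $\langle x\rangle$ together with $x$ lying in the radical forces $M_{\psi}=0$ (this is the one genuine computation, and it is the standard fact that over $\C[x]/(x^2)$ the free module has the $x$-action with no kernel beyond the image, which contradicts $M_\psi$ being a nonzero $\fc_0$-semisimple module on which $x^2$ acts as $\psi([x,x])=0$). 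Hence for each surviving $\psi$, in the notation of Proposition~\ref{P:factsaboutFchi} one has $z = 0$ and $n = \dim \fc_{\1} = d$.

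Then I would simply read off the conclusion from Proposition~\ref{P:factsaboutFchi}. Since $A_{\psi}$-smod $\cong \mathcal{F}(\fc,\fc_{\0})_{\psi}$ is a Frobenius category with a unique simple $S(\psi)$ of dimension $2^{\widetilde n} = 2^{\lfloor (d+1)/2\rfloor}$ and indecomposable projective/injective $P(\psi)$ of dimension $2^{\dim\fc_{\1}-\widetilde n}$ or $2^{\dim\fc_{\1}-\widetilde n+1}$ — in either case divisible by $2^{\widetilde n} \geq 2^{\lfloor d/2\rfloor}$ — every object of $\mathcal{F}(\fc,\fc_{\0})_{\psi}$ has dimension divisible by $2^{\widetilde n}$. (One can see this directly: the dimension of any module is a nonnegative-integer combination of the composition factor multiplicities of $S(\psi)$, each of dimension $2^{\widetilde n}$.) Therefore each $M_{\psi}$, hence $M = \bigoplus_{\psi} M_{\psi}$, has dimension divisible by $2^{\lfloor d/2\rfloor}$. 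For the superdimension statement, if $d > 0$ then $n = d > 0$ for each surviving block, so by Proposition~\ref{P:factsaboutFchi}(a) the unique simple $S(\psi)$ has superdimension $0$; as every object of $\mathcal{F}(\fc,\fc_{\0})_{\psi}$ is built from copies of $S(\psi)$ in the Grothendieck group and superdimension is additive on short exact sequences, each $M_{\psi}$ has superdimension $0$, and so does $M$.

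The main obstacle I anticipate is the transversality/genericity step: ensuring that codimension $d$ for the conical variety $\mathcal{V}_{(\fe,\fe_{\0})}(M)$ really guarantees a $d$-dimensional subspace $\fc_{\1}$ meeting it only at the origin, and — more delicately — that this purely set-theoretic transversality is enough to force nondegeneracy of the Clifford form for \emph{all} blocks $\psi$ of $M|_{\fc}$ simultaneously, rather than merely generically. The resolution is that the rank variety is defined pointwise (projectivity over $\langle x\rangle$ for \emph{each} $x$), so $\fc_{\1} \cap \mathcal{V}^{\text{rank}}_{(\fe,\fe_{\0})}(M) = \{0\}$ is exactly the statement that $M$, and hence each summand $M_{\psi}$, is projective over $\langle x\rangle$ for every nonzero $x \in \fc_{\1}$ — there is no generic fudge needed once one has chosen $\fc_{\1}$ with the right intersection property, and such a $\fc_{\1}$ exists by a standard dimension count for closed subvarieties of a vector space.
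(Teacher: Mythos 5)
Your overall strategy tracks the paper's: choose a $d$-dimensional subspace $\fc_{\1}\subseteq\fe_{\1}$ with $\fc_{\1}\cap\mathcal{V}_{(\fe,\fe_{\0})}(M)=\{0\}$, form $\fc=[\fc_{\1},\fc_{\1}]\oplus\fc_{\1}$, decompose $M|_{\fc}$ along characters $\psi\in\fc_{\0}^{*}$, and read the dimension bound off Clifford superalgebra representation theory. But there is a genuine gap at the step you call ``the one genuine computation.''

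The claim that the form $(x,y)=\psi([x,y])$ must be nondegenerate on $\fc_{\1}$ for every $\psi$ with $M_{\psi}\neq 0$ is false, and the reasoning offered does not establish it. If $x\neq 0$ lies in the radical, then on $M_{\psi}$ one has $x^{2}=\tfrac12\psi([x,x])=0$, so $M_{\psi}$ is a $\C[x]/(x^{2})$-supermodule; but the rank-one free $\C[x]/(x^{2})$-module is a \emph{nonzero projective} with $\ker(x)=\operatorname{im}(x)$, and it is perfectly well $\fc_{\0}$-semisimple (the abelian $\fc_{\0}$ acts by the scalar $\psi$). Nothing about ``no kernel beyond the image'' contradicts $M_{\psi}\neq 0$. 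Concretely, already for $\fg=\gl(1|1)$ with $\fe_{\1}=\C(E_{1,2}+E_{2,1})$, take $M=P(0)$ the projective cover of $\C$ in $\mathcal{F}(\fe,\fe_{\0})$; then $\mathcal{V}_{(\fe,\fe_{\0})}(M)=\{0\}$, $d=1$, $\fc=\fe$, and the only character appearing is $\psi=0$, for which the form is identically zero ($n=0$, not $n=d$). The nondegeneracy claim fails.

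This is not a minor slip: without $n=d$ your argument collapses. You bound $\dim M_{\psi}$ from below by $\dim S(\psi)=2^{\widetilde{n}}$ via composition factors, but $\widetilde{n}=\lfloor(n+1)/2\rfloor$ \emph{shrinks} as $n$ shrinks, so for small $n$ (in the extreme, $n=0$ and $\dim S(\psi)=1$) you get nothing. Likewise, for $n=0$ the simple $S(\psi)$ has superdimension $\pm1$, so additivity of superdimension over composition series does not give $\operatorname{sdim}M_{\psi}=0$.

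The correct route, and the one the paper takes, is to use projectivity of $M$ over the whole of $\fc$ rather than merely over each line $\langle x\rangle$. From $\fc_{\1}\cap\mathcal{V}_{(\fe,\fe_{\0})}(M)=\{0\}$ and the rank variety description one gets $\mathcal{V}_{(\fc,\fc_{\0})}(M)=\{0\}$, and then \cite[Theorem 6.4.2(b)]{BKN} gives that $M$ is projective as a $\fc$-supermodule, hence $M\cong\bigoplus_{\psi}P(\psi)^{\oplus m_{\psi}}$. Now apply Proposition~\ref{P:factsaboutFchi}(b), not (a): $\dim P(\psi)$ equals $2^{d-\widetilde{n}}$ or $2^{d-\widetilde{n}+1}$, and since $\widetilde{n}\leq\lceil d/2\rceil$ this is a power of $2$ that is at least $2^{\lfloor d/2\rfloor}$. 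Unlike $\dim S(\psi)$, this quantity \emph{grows} as $n$ decreases, which is exactly what makes the bound robust to degeneracy. Finally, each $P(\psi)$ has superdimension $0$ (for $n>0$ because $S(\psi)$ does, and for $n=0$ directly from $P(\psi)=U(\fc)\otimes_{U(\fc_{\0})}\C_{\psi}$ and PBW), giving the superdimension statement for $d>0$.
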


\begin{proof}  The case $d=0$ is vacuously true so we assume $d>0.$ As a consequence of the Noether Normalization Theorem (e.g.\ \cite[Theorem 3.1]{Kunz}) one can choose a subspace 
$H \subseteq  \mathcal{V}_{(\e,\e_{\0})}(\C)=\fe_{\1}$ of dimension $d$ such that 
$H \cap \mathcal{V}_{(\e,\e_{\0})}(M)= \{0 \}.$  Let $\fc$ denote the Lie subsuperalgebra of 
$\fe$ generated by $H;$ that is, $\fc_{\1}=H$ and $\fc_{\0}=[H,H].$  Observe that $\fc$ is a Lie superalgebra 
of the type considered in Sections 5.1-5.2.  Using the rank variety description (cf.\ Section~\ref{SS:detecting}) 
and the fact that $\fc_{\1} \cap \mathcal{V}_{(\e,\e_{\0})}(M)= \{0 \},$ one has $\mathcal{V}_{(\fc , \fc_{\0})}(M)=0.$  It then follows by \cite[Theorem 6.4.2(b)]{BKN} that $M$ is projective as a $\fc$-supermodule.  Thus $M$ decomposes 
as a direct sum of $P(\chi)$ 
for various $\chi \in \fc_{\0}^{*}.$  The dimension of each $P(\chi)$ is a power of two 
and is at its smallest when $n=d=\dim(\fc_{\1})$ and $S(\chi)$ is of type \texttt{M}.  Therefore, one has
\[
\dim P(\chi) \geq 2^{\lfloor d/2 \rfloor}.
\]  The first statement of the theorem follows from this inequality.  
Furthermore, $M$ has superdimension $0$ because each $P(\chi)$ has superdimension $0$. 
\end{proof}

\begin{corollary}\label{C:divisibilityforglmn}  Let $L(\lambda)$ be the finite dimensional simple $\gl(m|n)$-supermodule of highest weight $\lambda$, let $r=\operatorname{min}(m,n),$ the defect of $\gl(m|n),$ and let $a=\operatorname{atyp}(L(\lambda))$.  Then 
\[
2^{\lfloor (r-a)/2 \rfloor} \; \vert \; \dim L(\lambda).
\]  Furthermore, if the atypicality is strictly less than the defect, then the superdimension of $L(\lambda)$ is zero.

\end{corollary}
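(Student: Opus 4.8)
The plan is to deduce the corollary by combining Theorem~\ref{T:typeA} with Theorem~\ref{T:codimtheorem}. First I would view $L(\lambda)$ as an object of $\mathcal{F}(\fe,\fe_{\0})$ by restriction, as described in Section~\ref{SS:detecting}, so that Theorem~\ref{T:codimtheorem} is applicable to it. The essential input is the exact dimension of $\mathcal{V}_{(\fe,\fe_{\0})}(L(\lambda))$: by Theorem~\ref{T:typeA}(b) this variety equals $W\cdot\widetilde{\fe}_{\1}$, a union of $\binom{r}{k}$ linear subspaces each of dimension $k=\operatorname{atyp}(L(\lambda))=a$, so $\dim\mathcal{V}_{(\fe,\fe_{\0})}(L(\lambda))=a$.

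Next I would record that $\mathcal{V}_{(\fe,\fe_{\0})}(\C)\cong\fe_{\1}$ has dimension $r=\operatorname{def}(\gl(m|n))=\min(m,n)$, using the identification \eqref{E:cohomringiso} and the distinguished basis $x_{1},\dotsc,x_{r}$ of $\fe_{\1}$ from \eqref{E:ebasis}. Hence the codimension quantity appearing in Theorem~\ref{T:codimtheorem} is
\[
d=\dim\mathcal{V}_{(\fe,\fe_{\0})}(\C)-\dim\mathcal{V}_{(\fe,\fe_{\0})}(L(\lambda))=r-a.
\]
In particular, when $a=0$ (the projective case discussed in Section~\ref{SS:quibbles}) this still works, giving $d=r$.

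Finally I would invoke Theorem~\ref{T:codimtheorem} with $M=L(\lambda)$: it yields $2^{\lfloor d/2\rfloor}=2^{\lfloor(r-a)/2\rfloor}\mid\dim L(\lambda)$, and when $d>0$, equivalently $a<r$, the same theorem gives that $L(\lambda)$ has superdimension zero. This completes the argument.

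Since each step is simply a citation of an already-established result, there is essentially no genuine obstacle here; the only point that requires a moment of care is matching the numerical data across the two theorems --- namely confirming that the $k$ in Theorem~\ref{T:typeA}(b) is precisely $\operatorname{atyp}(L(\lambda))$ so that $\dim\mathcal{V}_{(\fe,\fe_{\0})}(L(\lambda))=a$, and verifying that $L(\lambda)$ restricted to $\fe$ does belong to the category $\mathcal{F}(\fe,\fe_{\0})$ to which Theorem~\ref{T:codimtheorem} applies.
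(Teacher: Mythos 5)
Your proposal is correct and matches the paper's own proof essentially verbatim: both arguments read off $\dim\mathcal{V}_{(\fe,\fe_{\0})}(L(\lambda))=a$ from Theorem~\ref{T:typeA} and $\dim\mathcal{V}_{(\fe,\fe_{\0})}(\C)=r$, then apply Theorem~\ref{T:codimtheorem} with $d=r-a$. The extra observation about the $a=0$ (typical, projective) case is a nice point of care, consistent with the discussion in Section~\ref{SS:quibbles}.
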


\begin{proof}  By \cite[Section 8.8]{BKN} and Theorem~\ref{T:typeA}, one has that $r= \dim \mathcal{V}_{(\e,\e_{\0})}(\C)$ and $a= \dim \mathcal{V}_{(\e,\e_{\0})}(L(\lambda))$.  The result then follows by the previous theorem.
\end{proof}

We remark that a closed formula for the dimension of $L(\lambda)$ is given by Su and Zhang \cite[Theorem 4.14]{suzhang}.  However, their formula is quite complicated and it does not seem that the above divisibility statement can easily be recovered from their work.

%\let\section=\oldsection
%\bibliographystyle{amsmath}
%\bibliographystyle{amsalpha}
%\bibliography{BKN2}

\begin{thebibliography}{BNW}

\bibitem[Ben]{benson}
D.~J. Benson, {\em Representations and Cohomology. {II}}, second ed., Cambridge
  Studies in Advanced Mathematics, vol.~31, Cambridge University Press,
  Cambridge, 1998.

\bibitem[BKN]{BKN}
Brian~D. Boe, Jonathan~R. Kujawa, and Daniel~K. Nakano, {\em Cohomology and
  support varieties for {L}ie superalgebras}, ar{X}iv:math.RT/0609363, 2006.

\bibitem[BW]{BorelWallach} Armand Borel and Nolan R. Wallach, \emph{Continuous cohomology, discrete subgroups, and representations of reductive groups}, Annals of Mathematics Studies, \textbf{94}  Princeton University Press, Princeton, N.J.; University of Tokyo Press, Tokyo, 1980.

%\bibitem[BNW]{BNW}
%Brian~D. Boe, Daniel~K. Nakano, and Emilie Wiesner, {\em Category {$\mathcal
%  O$} for the {V}irasoro algebra: Cohomology and {K}oszulity}, 2006,
%  ar{X}iv:math/0701644.

\bibitem[Bru1]{brundan1} Jonathan Brundan, {\em Kazhdan-{L}usztig polynomials and character formulae for the {L}ie superalgebra {$\mathfrak g\mathfrak l(m\vert n)$}}, J. Amer. Math. Soc. \textbf{16} (2003), no.~1, 185--231.

\bibitem[Bru2]{brundan2}
Jonathan Brundan, {\em Tilting modules for {L}ie superalgebras}, Comm. Algebra
  \textbf{32} (2004), no.~6, 2251--2268.

\bibitem[BK]{brundankleshchev}
Jonathan Brundan and Alexander Kleshchev, {\em Projective representations of
  symmetric groups via {S}ergeev duality}, Math. Z. \textbf{239} (2002), no.~1,
  27--68.

\bibitem[CWZ]{CWZ} S.-J. Cheng, W. Wang, and R.B. Zhang, \emph{A Fock space approach to representation theory of $\mathfrak{osp}(2|2n)$,} Transformation Groups (to appear).

\bibitem[CPS1]{CPS}
Edward~T. Cline, Brian~J. Parshall, and Leonard~L. Scott, {\em
  Finite dimensional algebras and highest weight categories}, J. Reine Angew.
  Math. \textbf{391} (1988), 85--99.

\bibitem[CPS2]{CPS3}
\bysame, {\em Duality in highest weight categories}, Classical groups and
  related topics (Beijing, 1987), Contemp. Math., vol.~82, Amer. Math. Soc.,
  Providence, RI, 1989, pp.~7--22.

%\bibitem[CPS3]{CPS2}
%\bysame, {\em On {E}xt-transfer for algebraic groups}, Transform. Groups
%  \textbf{9} (2004), no.~3, 213--236.

\bibitem[DK]{dadokkac}
Jiri Dadok and Victor Kac, {\em Polar representations}, J. Algebra \textbf{92}
  (1985), no.~2, 504--524.

\bibitem[DS]{dufloserganova}
M.~Duflo and V.~Serganova, {\em {On associated variety for Lie superalgebras}},
  ar{X}iv:math.RT/0507198, 2005.

\bibitem[Fri]{frisk}
Anders Frisk, {\em Typical blocks of the category {$\mathcal O$} for the queer
  Lie superalgebra}, 2006, preprint.

\bibitem[Ger1]{germoni} Jerome Germoni, \emph{Indecomposable representations of $ \mathfrak{osp}(3,2)$, $D(2,1;\alpha)$ and $G(3)$}. Colloquium on Homology and Representation Theory, Bol. Acad. Nac. Cienc.  \textbf{65}  (2000), 147--163.

\bibitem[Ger2]{germoni2} \bysame, \emph{Indecomposible representations of special linear Lie superalgebras}, J. Algebra \textbf{209} (1998) 367--401.

\bibitem[HN]{holmesnakano}
Randall~R. Holmes and Daniel~K. Nakano, {\em Brauer-type reciprocity for a
  class of graded associative algebras}, J. Algebra \textbf{144} (1991), no.~1,
  117--126.

\bibitem[Kac1]{Kac1}
Victor~G. Kac, {\em Lie superalgebras}, Advances in Math. \textbf{26} (1977),
  no.~1, 8--96.

\bibitem[Kac2]{Kacnote}
\bysame, {\em Representations of classical {L}ie superalgebras}, Differential
  geometrical methods in mathematical physics, II (Proc. Conf., Univ. Bonn,
  Bonn, 1977), Lecture Notes in Math., vol. 676, Springer, Berlin, 1978,
  pp.~597--626.

\bibitem[KW]{kacwakimoto}
Victor~G. Kac and Minoru Wakimoto, {\em Integrable highest weight modules over
  affine superalgebras and number theory}, Lie theory and geometry, Progr.
  Math., vol. 123, Birkh\"auser Boston, Boston, MA, 1994, pp.~415--456.

\bibitem[KWe]{kacweisfeiler} 
Boris Weisfeiler, Victor~G. Kac, {\em Irreducible representations of 
Lie $p$-algebras}, Funct. Anal. Appl. \textbf{5} (1971), 28--36.  

\bibitem[Kuj]{kujawa}
Jonathan Kujawa, {\em Crystal structures arising from representations of {${\rm
  GL}(m\vert n)$}}, Represent. Theory \textbf{10} (2006), 49--85 (electronic).

\bibitem[Kum]{kumar}
Shrawan Kumar, {\em Kac-{M}oody groups, their flag varieties and representation
  theory}, Progress in Mathematics, vol. 204, Birkh\"auser Boston Inc., Boston,
  MA, 2002.

\bibitem[Kun]{Kunz} Ernst Kunz, \emph{Introduction to Commutative Algebra and Algebraic Geometry,} Birkhauser Boston, Inc., Boston, MA, 1985. 

\bibitem[LR]{luna}
D.~Luna and R.~W. Richardson, {\em A generalization of the {C}hevalley
  restriction theorem}, Duke Math. J. \textbf{46} (1979), no.~3, 487--496.

\bibitem[Pen]{penkov}
I.~B. Penkov, {\em Characters of typical irreducible finite dimensional
  {${\mathfrak q}(n)$}-modules}, Funktsional. Anal. i Prilozhen. \textbf{20}
  (1986), no.~1, 37--45, 96.

\bibitem[Pre]{premet} 
A.~A. Premet, {\em Irreducible representations of Lie algebras of reductive groups 
and the Kac-Weisfeiler conjecture}.  Invent. Math.  \textbf{121}  (1995),  no. 1, 79--117. 

\bibitem[Ser]{serganova3}
Vera Serganova, {\em Blocks in the category of finite dimensional
  representations of {$\mathfrak{gl}(m|n)$}}, 2006, preprint.

C%\bibitem[Soe]{soergel}
%Wolfgang Soergel, {\em Character formulas for tilting modules over
%  {K}ac-{M}oody algebras}, Represent. Theory \textbf{2} (1998), 432--448
%  (electronic).

\bibitem[SZ]{suzhang}
Yucai Su and R.~B. Zhang, {\em Character and dimension formulae for general
  linear superalgebra}, arXiv:math.QA/0403315, to appear in Adv.\ in Math.

\end{thebibliography}
%\end{document}

\def\Dbar{\leavevmode\lower.6ex\hbox to 0pt{\hskip-.23ex \accent"16\hss}D}
  \def\cprime{$'$}

\end{document}